\newtheorem{theorem}{Theorem}[section]
\newtheorem{lemma}[theorem]{Lemma}
\newtheorem{corollary}[theorem]{Corollary}
\newtheorem{proposition}[theorem]{Proposition}
\theoremstyle{definition}
\newtheorem{remark}[theorem]{Remark}
\newtheorem{example}[theorem]{Example}
\newtheorem{examples}[theorem]{Examples}
\newtheorem{definition}[theorem]{Definition}
\numberwithin{equation}{section}
\numberwithin{equation}{section}
\newcommand{\nd}{\red{\ensuremath d}} %%% dimension variable in this file: d
\renewcommand{\nd}{{\ensuremath d}} %%% dimension variable in this file: d
\newcommand{\B}{\ensuremath{B^s_{p,q}}}
\newcommand{\ud}{\ensuremath{\mathrm{d}}}
\newcommand{\dint}{\ensuremath{\mathrm{d}}}
\newcommand{\bit}{\begin{itemize}}
\newcommand{\eit}{\end{itemize}}
\newcommand{\beq}{\begin{equation}}
\newcommand{\eeq}{\end{equation}}
\newcommand{\supp}{\mathrm{supp}\,}
\def\ls{\lesssim}
\newcommand{\Lloc}{L_1^{\mathrm{loc}}}
\newcommand{\nat}{\ensuremath{\mathbb{N}}}
\newcommand{\no}{\ensuremath{\nat_0}}
\newcommand{\rr}{\ensuremath{{\mathbb R}}}
\newcommand{\rd}{\ensuremath{{\mathbb R}^\nd}}
\newcommand{\zz}{\ensuremath{{\mathbb Z}}}
\newcommand{\zd}{\ensuremath{\mathbb{Z}^\nd}}
\newcommand{\M}{{\mathcal M}_{\varphi,p}}
\newcommand{\Mu}{{\mathcal M}_{u,p}}
\newcommand{\Me}{{\mathcal M}_{u_1,p_1}}
\newcommand{\Mz}{{\mathcal M}_{u_2,p_2}}
\newcommand{\MB}{{\mathcal N}^{s}_{\varphi,p,q}}
\newcommand{\MF}{{\mathcal E}^{s}_{\varphi,p,q}}
\newcommand{\Gp}{{\mathcal G}_p}
\newcommand{\n}{{n}^{s}_{\varphi,p,q}}
\definecolor{verde}{RGB}{0,205,0}
\newcommand{\whole}[1]{\ensuremath \lfloor #1 \rfloor}
\newcommand{\up}[1]{\ensuremath  \lceil #1 \rceil}
\newcommand{\blue}[1]{{\color{blue}#1}}
\newcommand{\egv}[1]{\ensuremath{{\mathcal E}\rule[-0.7ex]{0em}{2.4ex}_{\!\mathsf G}^{#1}}}
\newcommand{\egX}{\egv{X}}
\newcommand{\envg}{\ensuremath{{\mathfrak E}\rule[-0.5ex]{0em}{2.3ex}_{\!\mathsf G}}}
\newcommand{\uGv}[1]{\ensuremath{u_{\mathsf{G}}^{#1}}}
\newcommand{\uGindex}{\uGv{X}}
\newcommand{\fundfn}[1]{\varphi\rule[-0.6ex]{0ex}{2ex}_{#1}}
\newcommand{\charfn}[1]{\chi\rule[-1ex]{0ex}{2ex}_{#1}}
\newcommand{\ds}{\displaystyle}
\begin{document}

\title{
{On a bridge connecting Lebesgue and Morrey spaces in view of their growth properties}
}
\date{\today}
\author{ Dorothee D. Haroske\footnotemark[1],
Susana D. Moura\footnotemark[2],   and Leszek Skrzypczak\footnotemark[3]}
\maketitle

\footnotetext[1]{{The author was partially supported by the German Research Foundation (DFG), Grant no. Ha 2794/8-1.}}
\footnotetext[2]{{The author was partially supported by the Center for Mathematics of the University of Coimbra-UIDB/00324/2020, funded by the Portuguese Government through FCT/MCTES.}}
\footnotetext[3]{\blue{ ??}}

\begin{abstract} 
We study unboundedness properties of functions belonging to  generalised Morrey spaces $\M(\rd)$ and generalised Besov-Morrey spaces $\MB(\rd)$ by means of growth envelopes. 
%Our results extend the ones for classical spaces. 
For the generalised Morrey spaces  we arrive at  the same three possible cases as for classical Morrey spaces $\mathcal{M}_{u,p}(\rd)$, i.e., boundedness, the $L_p$-behaviour  or the proper Morrey behaviour for $p<u$, but now those cases are characterised in terms of the limit of $\varphi(t)$ and $t^{-\nd/p} \varphi(t)$ as $t \to 0^+$ and $t\to\infty$, respectively.  For the generalised Besov-Morrey spaces  the limit  of  $t^{-\nd/p} \varphi(t)$ as $t \to 0^+$ also plays a r\^ole and, once more, we are able to extend to this generalised spaces the known results for classical Besov-Morrey spaces, although some cases are not completely solved. In this context we can completely characterise the situation when $\MB(\rd)$ consists of essentially bounded functions only, and when it contains regular distributions only.
\end{abstract}

\medskip

{\bfseries MSC (2010)}: \smallskip 46E35\\

{\bfseries Key words}: Generalised Morrey spaces, generalised Besov-Morrey spaces, spaces of regular distributions, growth envelopes\\

\medskip\medskip

\section{Introduction}
In this paper we study smoothness function spaces built upon generalised Morrey spaces $\M(\rd)$, $0<p<\infty$, $\varphi:(0,\infty)\rightarrow [0,\infty)$. The generalised version of Morrey spaces $\mathcal{M}_{u,p}(\rd)$, $0 < p \le u < \infty$, {was} introduced by  Mizuhara \cite{mi} and Nakai \cite{Nak94}, and later applied successfully   to PDEs, cf. \cite{FHS,KMR,WNTZ,ZJSZ,KNS}.  We refer to \cite{Saw18} for further information about the spaces and the historical remarks.

In recent years also smoothness function spaces built upon Morrey spaces $\mathcal{M}_{u,p}(\rd)$ were investigated,  in particular Besov-Morrey spaces  $\mathcal{N}^s_{u,p,q}(\rd)$, $0 < p \le u < \infty$, $0 < q \le \infty$, $s \in \rr$.  They were introduced by Kozono and Yamazaki in \cite{KY} and used by them to study Navier-Stokes equations. Further applications of the spaces to PDEs can be found e.g. in the  papers written by Mazzucato \cite{Maz03}, by  Ferreira and Postigo \cite{FP} or by Yang, Fu, and Sun \cite{YFS}. 

Now we study the Besov spaces  $\MB(\rd)$ built upon generalised Morrey spaces. These spaces were introduced and studied by Nakamura, Noi and Sawano  \cite{NNS16}, cf. also \cite{AGNS}. In particular, they proved the  atomic decomposition theorem for the spaces. Recently Izuki and Noi investigated
spaces on domains in \cite{IN19}. The generalised Besov-Morrey spaces cover Besov-Morrey spaces and local Besov-Morrey spaces considered by Triebel \cite{Tri13} as special cases.

The paper relies on our recent outcome \cite{hms22} where we studied the spaces of type $\MB(\rd)$ in some detail, in particular, their embeddings and wavelet decomposition. Now we concentrate on the two questions, when such spaces contain regular distributions only, and -- in that case -- what unboundedness of the corresponding functions is possible. Our answer to the first question is almost complete and can be found in Theorem~\ref{emb-L1loc} below. Clearly this depends -- as in the case of Besov spaces $\B(\rd)$ -- 
upon the smoothness $s$, the integrability $p$, and in limiting cases also on the fine index $q$. It is remarkable that the expected influence of the Morrey parameter function $\varphi$ is only visible in case of $0<p<1$ and $\lim_{t\to 0^+} \varphi(t)=0$, while in all other cases the conditions for $\MB(\rd)\subset \Lloc(\rd)$ are literally the same as for $\B(\rd)\subset \Lloc(\rd)$, we refer to  Theorem~\ref{emb-L1loc} for details.

Somewhat supplementary we present the complete characterisation when the Dirac $\delta$-distribution belongs to the smoothness spaces of Besov-Morrey type $\MB(\rd)$, that is,
\[
	\delta\in\MB(\rd) \quad\text{if, and only if,}\quad  \left\{2^{j(s+\nd)}  \varphi(2^{-j})\right\}_{j\in\no}\in\ell_q , 
\]
where  $s\in\rr$, $0<p<\infty$, $0<q\leq \infty$, and $\varphi\in \Gp$; see Lemma~\ref{delta-gen-N} below.
        
Concerning the second question, this is characterised in terms of the growth envelope function. More precisely, we would like to understand the `quality' of the unboundedness in situations when $\M(\rd)\not\hookrightarrow L_\infty(\rd)$ or $\MB(\rd)\not\hookrightarrow L_\infty(\rd)$. We use the tool of the growth envelope function, that is, we study the behaviour  
\[
 \egX(t) \sim \sup\left\{ f^\ast(t)  : \ \|f|X\| \leq 1\right\} ,\quad
t>0,
\]
where $f^\ast$ denotes the non-increasing rearrangement of $f$, and $X$ stands for some function space, like $\M(\rd)$ or $\MB(\rd)$ in our setting. This approach turned out to be a rather powerful one, admitting a number of interesting applications. These concepts were introduced in \cite{T-func}, \cite{Ha-crc}, where the latter book also contains a survey (concerning extensions and more general approaches) as well as applications and further references. While in the easiest example, $X=L_p(\rd)$, the outcome reads as $\egv{L_p(\rd)}(t) \sim t^{-\frac1p}$, $t>0$, we found in \cite{HaSM3} that $\egv{\mathcal{M}_{u,p}(\rd)} = \infty$, $t>0$, whenever $u>p$. In \cite{hms16} we refined and extended this result to all admitted spaces $\mathcal{N}^s_{u,p,q}(\rd)$. In that sense the present paper can be understood as a continuation and generalisation of our studies in \cite{HaSM3,hms16} here. We compare it also with our findings in \cite{HSS-morrey,HMSS-morrey} in view of local behaviour. Now, dealing with the spaces $\M(\rd)$, the result depends on the local and global behaviour of the function $\varphi$. We observe some new phenomenon which can be roughly summarised as
 \[
  \begin{cases}
    \egv{\M(\rd)}(t) \ \sim \ 1 & \text{if, and only if,}\qquad \lim_{t\rightarrow 0^+} \varphi(t)>0, \\ 
\egv{\M(\rd)}(t) \ \sim \ t^{-1/p} \quad & \text{if, and only if,}\qquad \lim_{t\rightarrow 0^+} \varphi(t)=0 \quad\text{and}\quad \lim_{t\rightarrow \infty} t^{-\nd/p}\varphi(t)>0, \\
    \egv{\M(\rd)}(t) \ = \ \infty & \text{if, and only if,}\qquad \lim_{t\rightarrow 0^+} \varphi(t)=0 \quad\text{and}\quad \lim_{t\rightarrow \infty} t^{-\nd/p}\varphi(t)=0.
    \end{cases}
  \]
  In view of the assumption $\varphi\in\Gp$ this is a complete description of all possible cases. In other words, it means that for arbitrary admitted $\varphi$, essentially all situations are already covered by our previous studies: we have boundedness (first line), the $L_p$-behaviour (second line), or the (proper, classical) Morrey behaviour (last line). This seems the first such observation and was not to be expected from the very beginning. We refer to Corollaries~\ref{egM-coll}, \ref{egM-finite}, and  Theorem~\ref{egM-LS} below for details. The situation is more complicated, but to some extent similar for smoothness spaces $\MB(\rd)$ of Besov-Morrey type, we refer to {Propositions~\ref{P-case-1}, \ref{P-case-2}, \ref{P-case-3} and \ref{P-case-4}}, though not all phenomena are completely understood yet.  Besides the limits
$\lim_{t\rightarrow 0^+} \varphi(t)$ and $\lim_{t\rightarrow \infty} t^{-\nd/p}\varphi(t)$, 
already appearing when studying the generalised Morrey spaces, also the limit 
$\ds \lim_{t\rightarrow 0^+} \varphi(t) t^{-\frac{\nd}{p}} $ 
plays a r\^ole regarding the smoothness spaces, where now, in addition, the interplay of the smoothness parameter $s$ of $\MB(\rd)$ with the function $\varphi$ is important.

  The paper is organised as follows. In Section~\ref{prelim} we first briefly collect general notation before we then concentrate on the setting of generalised Morrey spaces $\M(\rd)$, including the characterisation of the possible unboundedness in terms of the growth envelope function. In Section~\ref{gen-Nphi} we deal with the concept of generalised Besov-Morrey spaces $\MB(\rd)$ and first approach the question, mentioned above, to characterise $\MB(\rd)\subset \Lloc(\rd)$ or $\delta\in\MB(\rd)$. Afterwards we study the growth envelope function of $\MB(\rd)$. In our arguments we rely on our preceding results for the cases $\mathcal{M}_{u,p}(\rd)$ and $\mathcal{N}_{u,p,q}^s(\rd)$, including, in particular $L_p(\rd)$ and $\B(\rd)$, as well as on the wavelet and atomic decomposition of $\MB(\rd)$. Occasionally we exemplify the outcome with some example functions $\varphi$.
  
Let us mention that the first motivation to study such questions came from the master's thesis by M.~Bauer \cite{bauer} supervised by the first author.
%%%%%%%%%%%%%%%%%%%%%%%%%%%%%%%%%%%%%%%%%%%%%%%%%%%%%%%

\section{Generalised Morrey spaces}\label{prelim}

First we fix some notation. By $\nat$ we denote the {set of natural numbers},
by $\no$ the set $\nat \cup \{0\}$,  and by $\zd$ the {set of all lattice points
in $\rd$ having integer components}. Let $\no^\nd$, where $\nd\in\nat$, be the set of all multi-indices, $\alpha = (\alpha_1, \ldots,\alpha_\nd)$ with $\alpha_j\in\no$ and $|\alpha| := \sum_{j=1}^\nd \alpha_j$. If $x=(x_1,\ldots,x_\nd)\in\rd$ and $\alpha = (\alpha_1, \ldots,\alpha_\nd)\in\no^\nd$, then we put $x^\alpha := x_1^{\alpha_1} \cdots x_\nd^{\alpha_\nd}$. 
For $a\in\rr$, let   $\whole{a}:=\max\{k\in\zz: k\leq a\}$, $\up{a} = \min\{k\in \zz:\; k\ge a \}$, and $a_{+}:=\max(a,0)$.
%If $0<u\leq \infty$, the number $u'$ is given by $\frac{1}{u'}=(1-\frac1u)_+$, with the convention that $1/\infty=0$. 
Given any $u\in (0,\infty]$, it will be denoted by $u'$ the number, possibly $\infty$, defined by the expression  $\frac{1}{u'}=(1-\frac{1}{u})_+$; in particular when $1\leq u\leq \infty$, $u'$ is the same as the conjugate exponent defined through $\frac{1}{u}+ \frac{1}{u'}=1$.
All unimportant positive constants will be denoted by $C$, % occasionally with subscripts.
occasionally the same letter $C$ is used to denote different constants  in the same chain of inequalities.
 By the notation $A \ls B$, we mean that there exists a positive constant $c$ such that
 $A \le c \,B$, whereas  the symbol $A \sim B$ stands for $A \ls B \ls A$.
 We denote by
 %$B(x,r) :=  \{y\in \rd: |x-y|<r\}$ the ball centred at $x\in\rd$ with radius $r>0$ and
 $|\cdot|$ the Lebesgue measure when applied to measurable subsets of $\rd$.
For each cube $Q\subset \rd $ we denote  its side length by $ \ell(Q)$, and, for $a\in (0,\infty)$, we denote by $aQ$ the cube concentric with $Q$ having the side length $a\ell(Q)$. For $x\in\rd$ and $r \in (0, \infty)$  we denote by $Q(x, r)$ the compact cube centred at $x$ with side length $r$, whose sides are parallel to the axes of coordinates. We write simply $Q(r)=Q(0,r)$ when $x=0$.
By $\mathcal{Q}$ we denote the collection of all {dyadic cubes} in $\rd$, namely, $\mathcal{Q}:= \{Q_{j,k}:= 2^{-j}([0,1)^\nd+k):\ j\in\zz,\ k\in\zd\}$. %The following is our convention for dyadic cubes.  For $j \in \mathbb{Z}$ and $k \in \zd$, denote by $Q_{jk}$ the dyadic cube $2^{-j}([0,1)^n + k)$ and $x_{Q_{jk}}$ its lower left corner. Let  $\mathcal{Q}:= \{ Q_{jk} : j \in \mathbb{Z}, \,k \in \zd\}$, $\mathcal{Q}^*:= \{ Q\in\mathcal{Q} : \ell(Q)\leq 1\}$ and  $j_Q:= - \log_2 \ell(Q)$  for all $Q\in  \mathcal{Q}$. 
%%When the dyadic cube $Q$ appears as an index, such as $\sum_{Q\in \mathcal{Q}}$ and $(\cdots)_{Q\in \mathcal{Q}}$, it is understood that $Q$ runs over all dyadic cubes in $\rd$. 
%

Given two (quasi-)Banach spaces $X$ and $Y$, we write $X\hookrightarrow Y$
if $X\subset Y$ and the natural embedding of $X$ into $Y$ is continuous.

Recall first  that the classical  Morrey space
${\mathcal M}_{u,p}(\rd)$, $0<p\leq u<\infty $, is defined to be the set of all
  locally $p$-integrable functions $f\in L_p^{\mathrm{loc}}(\rd)$  such that
$$
\|f \mid {\mathcal M}_{u,p}(\rd)\| :=\, \sup_{Q\in \mathcal{Q}} |Q|^{\frac{1}{u}}
\left( {\frac{1}{|Q|}}\int_{Q} |f(y)|^p \dint y \right)^{\frac{1}{p}}\, <\, \infty\, .
$$

In this paper we consider generalised Morrey spaces where the parameter $u$ is replaced by a function $\varphi$ according to the following definition. 

\begin{definition}
Let $0<p<\infty$ and $\varphi:(0,\infty)\rightarrow [0,\infty)$ be a function which does not satisfy $\varphi\equiv 0$. Then $\M(\rd)$ is the set of all 
locally $p$-integrable functions $f\in L_p^{\mathrm{loc}}(\rd)$ for which 
\begin{equation*} %\label{Morrey-norm_0}
{\|f\mid \M(\rd)\|_\star:=\sup_{x\in\rd, r>0} \varphi(r) %\bigl(\ell(Q(x,r))\bigr)
\biggl(\frac{1}{|Q(x,r)|}\int_{Q(x,r)} |f(y)|^p \dint y\biggr)^{\frac{1}{p}} \, <\, \infty\, . }
%\|f\mid \M(\rd)\|:=\sup_{Q\in\mathcal{Q}}\varphi %\bigl(\ell(Q)\bigr)\biggl(\frac{1}{|Q|}\int_Q |f(y)|^p \dint %y\biggr)^{\frac{1}{p}} \, <\, \infty\, .
\end{equation*}
\end{definition}

\begin{remark} \label{rmk1}
The above definition goes back to \cite{Nak94}.
When $\varphi(t)=t^{\frac{\nd}{u}}$ for $t>0$ and $0<p\leq u<\infty$, then $\M(\rd)$ coincides with ${\mathcal M}_{u,p}(\rd)$, which in turn recovers the Lebesgue space $L_p(\rd)$ when $u=p$. 
%%
%In the definition of $\|\cdot\mid \M(\rd)\|$ balls or cubes with %sides parallel to the axes of coordinates can be taken. This  change %leads to  equivalent quasi-norms. 
Note that for $\varphi_0\equiv 1$ (which would correspond to $u=\infty$) we obtain %as in that case
\begin{equation}\label{M1p}
{\mathcal M}_{\varphi_0,p}(\rd) = L_\infty(\rd),\quad 0<p<\infty,\quad \varphi_0\equiv 1,
\end{equation}
due to Lebesgue's differentiation theorem.  

When $\varphi(t)=t^{-\sigma} \chi_{(0,1)}(t)$ where  $-\frac{\nd}{p}\leq \sigma <0$, then $\M(\rd)$ coincides with the local Morrey spaces $\mathcal{L}^{\sigma}_p(\rd)$ introduced by H.~Triebel in \cite{Tri11}, cf. also \cite[Section~1.3.4]{Tri13}. If $ \sigma=-\frac{\nd}{p}$, then the space is a uniform Lebesgue space $\mathcal{L}_p(\rd)$.
\end{remark}

\bigskip 

For $\M(\rd)$ it is usually required that $\varphi\in \Gp$, where $ \Gp$ 
is the set of all nondecreasing functions $\varphi:(0,\infty)\rightarrow (0,\infty)$ such that $\varphi(t)t^{-\nd/p}$ is a nonincreasing function, i.e.,
\begin{equation}\label{Gp-def}
1\leq \frac{\varphi(r)}{\varphi(t)}\leq \left(\frac{r}{t}\right)^{\nd/p}, \quad 0<t\leq r<\infty.
\end{equation}
A justification for the use of the class $\Gp$ comes from the  lemma below, cf. e.g. \cite[Lem. 2.2]{NNS16}.  One can easily check that $\mathcal{G}_{p_2}\subset \mathcal{G}_{p_1}$ if $0<p_1\le p_2<\infty$. 
%Besides the function $\varphi(t):=t^{\frac{d}{u}}$,  $0<p\leq u<\infty$, that belongs to $G_p$, other examples can be found %e.g. in \cite[Ex.~3.15]{Saw18}.

%\begin{lemma}[{\cite{NNS16}}]
%Let $0<p<\infty$. 
%\begin{itemize}
%\item[{\upshape\bfseries (i)}] Given $\varphi:(0,\infty)\rightarrow (0,\infty)$, there exists $\varphi^*\in\Gp$ such that
%$$
%\M(\rd)={\mathcal M}_{\varphi^*,p}(\rd)
%$$
%in the sense of equivalent (quasi-)norms.
%\item[{\upshape\bfseries (ii)}] Given $\varphi:(0,\infty)\rightarrow (0,\infty)$, $\M(\rd)\neq \{0\}$ if and only if 
%$$
%\varphi^*(t):=\sup_{r\in[t,\infty)}t^{\nd/p}r^{-\nd/p} \varphi(r) \qquad \text{and} \quad \varphi^+(t):=\sup_{r\in (0,t]} \varphi(r) 
%$$
%are finite for all $t>0$.
%\end{itemize}
%\end{lemma}

  \begin{remark}
   { {If $\varphi\in \Gp$,  then it enjoys a doubling property, i.e.,  $\varphi(r)\le \varphi(2r) \le 2^{\nd/p}\varphi(r)$, $r\in (0,\infty)$. In consequence we can define an equivalent quasi-norm in  $\M(\rd)$ by taking the supremum over the collection of all dyadic cubes. Later on we  always assume that $\varphi\in \Gp$ and we use the quasi-norm 
   	\begin{equation} \label{Morrey-norm}
   		\|f\mid \M(\rd)\|:=\sup_{Q\in\mathcal{Q}}\varphi \bigl(\ell(Q)\bigr)\biggl(\frac{1}{|Q|}\int_Q |f(y)|^p \dint y\biggr)^{\frac{1}{p}} . %\, <\, \infty\, .
   	\end{equation}
    This choice is natural and covers all interesting cases, cf. Lemma~\ref{eqvarphi} below.  A similar argument shows that in the definition of $\|\cdot\mid \M(\rd)\|_\star$ balls instead of  cubes can be taken. This  change leads once more to  equivalent quasi-norms. }
   	   		
   	We shall usually assume in the sequel  that $\varphi(1)=1$. This simplifies the notation but does not limit the generality of considerations. Note that for a function $\varphi\in\Gp$ with $\varphi(1)=1$, then \eqref{Gp-def} implies } that
\begin{align}\label{Gp-tlarge}
1\leq \varphi(t)\leq t^{\frac{\nd}{p}},\quad t\geq 1, & \quad \text{in particular},\quad 
  \varphi(t) t^{-\frac{\nd}{p}} \leq 1\quad \text{for}\qquad t\to\infty,
  \intertext{and}\label{Gp-tsmall}
  t^{\frac{\nd}{p}}\leq \varphi(t)\leq 1,\quad 0<t\leq 1.&
\end{align}   
Together with the required monotonicity conditions for $\varphi\in\Gp$  this implies the existence of both limits
\begin{equation}\label{lim-exist}
  \lim_{t\to\infty} \varphi(t) t^{-\frac{\nd}{p}} \in [0,1] \qquad \text{and}\qquad \lim_{t\to 0^+} \varphi(t) \in [0,1].
  \end{equation}
Later it will turn out that the special cases when one or both limits equal $0$ play an important r\^ole. 
\end{remark}

\begin{lemma}[\cite{NNS16,Saw18}]\label{eqvarphi} 
Let $0<p<\infty$ and $\varphi:(0,\infty)\rightarrow [0,\infty)$ be a function satisfying $\varphi(t_0)\neq 0$ for some $t_0>0$. 
\begin{itemize}
\item[{\upshape\bfseries (i)}] Then $\M(\rd)\neq \{0\}$ if, and only if, 
$$
\sup_{t>0} \varphi(t)  \min (t^{-\frac{\nd}{p}},1) < \infty.
$$
\item[{\upshape\bfseries (ii)}] Assume \ $\displaystyle\sup_{t>0} \varphi(t)  \min (t^{-\frac{\nd}{p}},1) < \infty$. Then there exists $\varphi^*\in\Gp$ such that
$$
\M(\rd)={\mathcal M}_{\varphi^*,p}(\rd)
$$
in the sense of equivalent (quasi-)norms.
 \end{itemize}
\end{lemma}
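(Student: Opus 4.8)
\emph{Part (i).} For the ``if'' direction I exhibit a nonzero element: take $f=\chi_{Q(0,1)}$. For any cube $Q(x,r)$ one has $\int_{Q(x,r)}|f|^p=|Q(x,r)\cap Q(0,1)|\le\min(r^\nd,1)$, hence $\varphi(r)\bigl(|Q(x,r)|^{-1}\int_{Q(x,r)}|f|^p\bigr)^{1/p}\le\varphi(r)\min(1,r^{-\nd/p})$, which is bounded by hypothesis; so $f\in\M(\rd)\setminus\{0\}$. For the ``only if'' direction, fix $f\in\M(\rd)$ with $f\not\equiv0$, so $\int_{Q_0}|f|^p=\delta>0$ for some cube $Q_0=Q(x_0,r_0)$. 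Enlarging gives, for $t\ge r_0$, the inclusion $Q(x_0,t)\supseteq Q_0$ and thus $\varphi(t)t^{-\nd/p}\le\delta^{-1/p}\|f\mid\M(\rd)\|_\star$; covering $Q_0$ by $\lceil r_0/t\rceil^\nd\le(2r_0/t)^\nd$ cubes of side $t$ gives, for $0<t\le r_0$, one such cube $Q'$ with $\int_{Q'}|f|^p\ge\delta(t/(2r_0))^\nd$ and therefore $\varphi(t)\le(2r_0)^{\nd/p}\delta^{-1/p}\|f\mid\M(\rd)\|_\star$. These two bounds, together with the trivial estimate on the compact interval between $\min(1,r_0)$ and $\max(1,r_0)$ (using $r_0\le t$ or $t\le r_0$ there), dominate $\varphi(t)\min(1,t^{-\nd/p})$ uniformly in $t$.

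\emph{Part (ii).} I set
\[
\varphi^*(r):=\sup_{s>0}\varphi(s)\,\min\!\Bigl(1,(r/s)^{\nd/p}\Bigr),\qquad r>0,
\]
which is the pointwise smallest function of $\Gp$ dominating $\varphi$: one checks directly that $\varphi^*\ge\varphi$ (take $s=r$), that $\varphi^*$ is nondecreasing and $\varphi^*(r)r^{-\nd/p}=\sup_{s>0}\varphi(s)\min(r^{-\nd/p},s^{-\nd/p})$ is nonincreasing, and that any $\psi\in\Gp$ with $\psi\ge\varphi$ satisfies $\psi\ge\varphi^*$. The hypothesis $\sup_{t>0}\varphi(t)\min(t^{-\nd/p},1)<\infty$ is exactly what is needed to verify $\varphi^*(r)<\infty$ for each $r$ (split the supremum over $s$ according to whether $s\le1$ or $s>1$, and $s\le r$ or $s>r$), while $\varphi(t_0)\neq0$ yields $\varphi^*>0$; hence $\varphi^*\in\Gp$. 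The inequality $\|f\mid\M(\rd)\|_\star\le\|f\mid{\mathcal M}_{\varphi^*,p}(\rd)\|_\star$ is then immediate from $\varphi\le\varphi^*$, so it remains to prove the reverse.

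For the reverse estimate I introduce
\[
G(r):=\sup_{x\in\rd}\Bigl(\frac1{|Q(x,r)|}\int_{Q(x,r)}|f(y)|^p\,\dint y\Bigr)^{1/p},
\]
so that $\|f\mid\M(\rd)\|_\star=\sup_{r>0}\varphi(r)G(r)$ and likewise with $\varphi^*$. Two structural properties of $G$ are needed: (a) $r\mapsto G(r)r^{\nd/p}$ is nondecreasing, because $Q(x,s)\subseteq Q(x,t)$ for $s\le t$; (b) $G$ is almost decreasing, $G(t)\le2^{\nd/p}G(s)$ whenever $s\le t$, obtained by tiling $Q(x,t)$ with $\lceil t/s\rceil^\nd\le(2t/s)^\nd$ cubes of side $s$. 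Granted (a) and (b), for any $r,s>0$ one estimates $\varphi(s)\min(1,(r/s)^{\nd/p})G(r)\le2^{\nd/p}\varphi(s)G(s)\le2^{\nd/p}\|f\mid\M(\rd)\|_\star$ --- using (a) when $s\ge r$, so that $G(r)(r/s)^{\nd/p}\le G(s)$, and (b) when $s\le r$, so that $G(r)\le2^{\nd/p}G(s)$ --- and taking the supremum first over $s$ and then over $r$ gives $\|f\mid{\mathcal M}_{\varphi^*,p}(\rd)\|_\star\le2^{\nd/p}\|f\mid\M(\rd)\|_\star$.

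The only genuinely delicate point is the lower bound in part (ii): everything hinges on recovering from the single scalar $\|f\mid\M(\rd)\|_\star$ the weighted-monotonicity property (a) and the almost-decreasing property (b) of $G$, which are precisely matched to the two building blocks $\min(1,\,\cdot\,)$ and $(r/s)^{\nd/p}$ appearing in the definition of $\varphi^*$; the remaining verifications (the $\Gp$-membership of $\varphi^*$, the finiteness bookkeeping, the elementary splittings in part (i)) are routine.
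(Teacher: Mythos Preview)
Your proof is correct and self-contained. The paper itself does not supply a proof of this lemma; it simply quotes the result from \cite{NNS16,Saw18}. Your construction of $\varphi^*$ as the least $\Gp$-majorant of $\varphi$ via $\varphi^*(r)=\sup_{s>0}\varphi(s)\min(1,(r/s)^{\nd/p})$ is the standard one used in those references, and your verification of the norm equivalence through the two monotonicity properties (a) and (b) of the auxiliary function $G$ is clean and matches the idea behind the original arguments. In short, you have reproduced the proof that the paper chose to omit.
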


  \begin{remark}\label{R-emb-Mphi}
    In \cite[Thm.~3.3]{GHLM17} it is shown that for $1\leq p_2\leq p_1<\infty$, $\varphi_i\in \mathcal{G}_{p_i}$, $i=1,2$, then
    \begin{equation} \label{emb-Mphi}
      {\mathcal M}_{\varphi_1, p_1}(\rd) \hookrightarrow {\mathcal M}_{\varphi_2, p_2}(\rd)
      \end{equation}
    if, and only if, there exists some $C>0$ such that for all $t>0$, $\varphi_1(t)\geq C \varphi_2(t)$. 
   The argument can be immediately extended to $0<p_2\leq p_1<\infty$, cf. \cite[Cor.~30]{FHS-MS-2}.
   
 In case of $\varphi_i(t) =t^{\nd/u_i}$, $0<p_i\leq u_i<\infty$, $i=1,2$, it is well-known that
\begin{equation}\label{emb-Mu}
\Me(\rd) \hookrightarrow \Mz(\rd)\qquad\text{if, and only if,}\qquad
p_2\leq p_1\leq u_1=u_2, 
\end{equation}
cf. \cite{piccinini-1} and \cite{rosenthal}. This can be observed from \eqref{emb-Mphi} { since} 
\[
\varphi_1(t)\geq C \varphi_2(t) \quad\text{{means}}\quad t^{\frac{\nd}{u_1}} \geq C t^{\frac{\nd}{u_2}},\quad t>0,
\]
which results in $u_1=u_2$.

Let $p\in (0,\infty)$, $\varphi\in \Gp$, and $0<r\leq p$. Then \eqref{emb-Mphi} applied to $\varphi_1=\varphi$, $\varphi_2(t)=t^{\nd/r}$ results in
\begin{equation*}
  \M(\rd)\hookrightarrow L_r(\rd) \quad\text{if, and only if,}\quad \varphi(t) \geq C t^{\frac{\nd}{r}},\quad t>0.
  \end{equation*}
In connection with \eqref{Gp-tlarge} and \eqref{Gp-tsmall} this leads to $r=p$, that is,
\begin{equation}\label{Mphi-Lr}
 \M(\rd)\hookrightarrow L_r(\rd) \quad\text{if, and only if,}\quad r=p \quad \text{and}\quad \varphi(t) \sim t^{\nd/p},\quad t>1.
\end{equation}

Finally, if there exists some $u\geq p$ such that $\varphi(t) \leq c t^{\nd/u}$, $t>0$, then
\begin{equation}\label{emb-Mu-Mphi}
  \mathcal{M}_{u,r}(\rd)\hookrightarrow \M(\rd)\quad \text{for all $r$ with $p\leq r\leq u$}.
\end{equation}

  \end{remark}

 \begin{remark}\label{emb-Linf}
   Let us recall Sawano's observation, cf. \cite{Saw18},  that {if} $\ \inf_{t>0} \varphi(t)>0$, then $\mathcal{M}_{\varphi,p}(\rd)\hookrightarrow L_\infty(\rd)$, while 
$\ \sup_{t>0} \varphi(t)<\infty\ $ implies $ L_\infty(\rd)\hookrightarrow\mathcal{M}_{\varphi,p}(\rd)$, leading as a special case to \eqref{M1p}.
 In particular, when $\varphi\in\Gp$, $0<p<\infty$, then according to \cite[Thms. 27, 28]{FHS-MS-2},
     \begin{equation}\label{emb-Linf-sharp}
       \M(\rd) \hookrightarrow L_\infty(\rd) \quad\text{if, and only if,}\quad 
       \inf_{t>0} \varphi(t)>0,
  \end{equation}
and
     \begin{equation}\label{Linf-emb-sharp}
       L_\infty(\rd) \hookrightarrow \M(\rd) \quad\text{if, and only if,}\quad 
       \sup_{t>0} \varphi(t)<\infty.
  \end{equation}
  \end{remark}

We consider the following examples.

\begin{examples}\label{exm-Gp}
\begin{enumerate}[\bfseries (i)]
\item The function 
\begin{equation}\label{example1} \varphi_{u,v}(t)=
\begin{cases}
t^{\nd/u} & \text{if}\qquad t\le 1,\\ 
t^{\nd/v} & \text{if}\qquad t >1, 
\end{cases}
\end{equation}
with $0<u,v<\infty$ belongs to $\Gp$ with $p=\min(u,v)$. In particular, taking $u=v$, the
%\item The
function $\varphi(t)=t^{\frac{\nd}{u}}$ belongs to $\mathcal{G}_p$ whenever  $0<p\leq u<\infty$. 
\item The function  $\varphi(t)=\max(t^{\nd/v},1)$ belongs to $\mathcal{G}_v$. It corresponds to \eqref{example1} with $u=\infty$.
\item The function  $\varphi(t)=\sup\{s^{\nd/u}\chi_{(0,1)}(s): s\le t\}= \min(t^{\nd/u},1)$ defines an equivalent (quasi)-norm in $\mathcal{L}^{\sigma}_p(\rd)$, $\sigma= -\frac{\nd}{u}$, $p\le u$. The function $\varphi$  belongs to $\mathcal{G}_u\subset \mathcal{G}_p$. It corresponds to \eqref{example1} with $v=\infty$. 
\item  The   function $\varphi(t)=t^{\nd/u}(\log (L+t))^a$, with $L$ being a sufficiently large constant, belongs to $\mathcal{G}_u$ if  $0< u<\infty$ and $a\leq 0$. 
\end{enumerate}
\smallskip
Other examples   can be found e.g. in \cite[Ex.~3.15]{Saw18}.
\end{examples}

If $f$ is an extended complex-valued measurable function on
$\rd$ which is finite a.e., then the decreasing
rearrangement of $f$ is the function defined on $[0,\infty)$ by
\begin{equation}  \label{f*}
f^{*}(t):= \inf \{\sigma > 0: \mu (f,\sigma)\leq t\},\quad t \geq
0,
\end{equation}
with $\mu (f,\sigma)$ being the distribution function given by
$$
\mu (f,\sigma):= \big|\{x\in\rd:|f(x)|>\sigma\}\big|,
\quad \sigma \geq 0.
$$
As usual, the convention $\inf \emptyset =\infty$ is assumed.

\begin{definition}
\label{defi_eg}
Let $X=X(\rd)$ be some quasi-normed function space on $\rd$. 
The {\em growth envelope function} $\egX : (0,\infty) \rightarrow [0,\infty]$ of $\ X\ $
is defined by
\[
\egX(t) := \sup_{\|f\mid X\|\leq
1} f^*(t), \qquad t\in (0,\infty).
\]
\end{definition}

This concept was first introduced and  studied in \cite[Chapter~2]{T-func} and
\cite{HaHabil}; see also \cite{Ha-crc}. For convenience, we recall some
properties. We obtain, strictly
speaking, equivalence classes of growth envelope functions
when working with equivalent quasi-norms in $ X $ as we shall usually
do. But we do not want to
distinguish between representative and equivalence class in what
follows and thus stick at the notation introduced above.

The following result comes from \cite[Prop.~3.4]{Ha-crc}.

\begin{proposition} \label{properties:EG}
\begin{enumerate}[{\bfseries\upshape (i)}]
\item
Let $X_i=X_i(\rd)$, $i\in\{1,2\}$, be some
function spaces on $\rd$. Then $X_1\hookrightarrow X_2$ implies
that there exists a positive constant $C$ such that, for all $t\in(0,\infty)$,
\beq
\egv{X_1}(t)\ \leq \ C\ \egv{X_2}(t).
\label{eg-XX}
\eeq
\item
The embedding $X(\rd)\hookrightarrow L_{\infty}(\rd)$ holds if,
and only if, $\egX$ is bounded.
\end{enumerate}
\end{proposition}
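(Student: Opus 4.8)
The plan is to prove both parts directly from the definition of the growth envelope function, using only the monotonicity of the decreasing rearrangement under the operations involved. For part (i), suppose $X_1\hookrightarrow X_2$, so there is a constant $C>0$ with $\|f\mid X_2\|\leq C\|f\mid X_1\|$ for all $f$. Fix $t>0$. Given any $f$ with $\|f\mid X_1\|\leq 1$, the function $g:=C^{-1}f$ satisfies $\|g\mid X_2\|\leq 1$, hence $g^\ast(t)\leq\egv{X_2}(t)$; since the rearrangement is positively homogeneous, $g^\ast(t)=C^{-1}f^\ast(t)$, so $f^\ast(t)\leq C\,\egv{X_2}(t)$. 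Taking the supremum over all such $f$ yields $\egv{X_1}(t)\leq C\,\egv{X_2}(t)$, which is \eqref{eg-XX}. (If one prefers to avoid scaling, one can instead argue with the unit-ball inclusion $\{f:\|f\mid X_1\|\leq 1\}\subset\{f:\|f\mid X_2\|\leq C\}$ and the identity $\egv{X}(t)=\lambda^{-1}\sup\{f^\ast(t):\|f\mid X\|\leq\lambda\}$.) The only mild subtlety is that one should work with representatives of the equivalence classes of quasi-norms, but since the statement is up to a multiplicative constant this causes no difficulty.

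For part (ii), I would prove the two implications separately. If $X\hookrightarrow L_\infty(\rd)$, then by part (i) and the elementary fact $\egv{L_\infty(\rd)}(t)\equiv 1$ (indeed $f^\ast(t)\leq\|f\mid L_\infty\|$ for every $t>0$), we get $\egX(t)\leq C$ for all $t$, so $\egX$ is bounded. Conversely, suppose $\egX$ is bounded, say $\egX(t)\leq M$ for all $t>0$. Then for every $f$ with $\|f\mid X\|\leq 1$ and every $t>0$ we have $f^\ast(t)\leq M$, hence $f^\ast(0^+)=\sup_{t>0}f^\ast(t)\leq M$; but $f^\ast(0^+)=\|f\mid L_\infty(\rd)\|$ (the essential supremum equals the supremum of the rearrangement as $t\to 0^+$). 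By homogeneity this gives $\|f\mid L_\infty(\rd)\|\leq M\|f\mid X\|$ for all $f\in X$, i.e. $X\hookrightarrow L_\infty(\rd)$.

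The argument is essentially bookkeeping with rearrangements, so there is no real obstacle; the one point that deserves a careful word is the identity $\|f\mid L_\infty\|=\lim_{t\to 0^+}f^\ast(t)$ used in the converse of (ii), which follows immediately from \eqref{f*} and the definition of the distribution function. Since this result is quoted from \cite[Prop.~3.4]{Ha-crc}, it would be legitimate to simply cite it; the sketch above indicates the short self-contained proof.
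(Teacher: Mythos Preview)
Your argument is correct. Note, however, that the paper does not actually supply a proof of this proposition: it is quoted verbatim from \cite[Prop.~3.4]{Ha-crc} with the preamble ``The following result comes from \cite[Prop.~3.4]{Ha-crc}'', and no proof is given in the present paper. Your self-contained derivation from the definition of $f^\ast$ and the homogeneity of the rearrangement is exactly the standard one behind that citation, so there is nothing to compare.
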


\begin{remark}
Let $X=X(\rd)$ be a rearrangement-invariant Banach function space, $t>0$, and $ A_t
\subset \rd $ with $|A_t| = t$, then the {fundamental function}
$\fundfn{X}$ of $X$ is defined by $\ 
\fundfn{X}(t) = \big\| \charfn{A_t} \big| X\big\| $. In
\cite[Sect.~2.3]{Ha-crc} we proved that in this case
\[\egX(t) \sim  \frac{1}{\fundfn{X}(t)}, \quad t>0.\]
\end{remark}

\begin{example}
Basic examples for spaces $X$ are the well-known Lorentz spaces $L_{p,q}(\rd)$; for
  definitions and further details we refer to \cite[Ch.~4, 
  Defs.~4.1]{BS}, for instance. This scale represents a natural refinement of the scale of Lebesgue spaces. The following result is proved in \cite[Sect. 2.2]{Ha-crc}. Let $0<p<\infty$, $0<q\leq \infty$. Then
\begin{equation}
\egv{L_{p,q}(\rd)}(t) \;\sim\; t^{-\frac1p} \; ,\qquad t >0.
\label{Lp-global}
\end{equation}
\label{Prop-Lpq}
\end{example}

\begin{remark}\label{R-uGindex}
We dealt in the papers and books mentioned above usually with the so-called {growth
envelope} $\envg(X)$ of some function space $X$: the envelope function $\egX$
is then equipped with some additional fine index $\uGindex$ that contains
further information.  More precisely, assume $X\not\hookrightarrow L_\infty$. For convenience, let $\egX$ be continuously differentiable. Then the number $\uGindex$, $0<\uGindex\leq\infty$, is defined as the infimum of all numbers $ v$, $ 0<v\leq \infty$, such that
\begin{equation}
\left(-\int\limits_0^{\varepsilon}\left(\frac{f^\ast(t)}{\egX(t)}\right)^v\; \frac{(\egX)'(t)}{\egX(t)}\ud t\right)^{1/v}\leq c\,\|f\vert X\| \label{uX}
\end{equation}
$($with the usual modification if $v=\infty)$ holds for some $c>0$
and all $f\in X$. The couple
\[
\envg(X) =\left(\egX(\cdot),\uGindex\right)
\]
is called {\em (local) growth envelope} for the function space $X$. We shall benefit from the following observation below: let $X_i$, $i=1,2$, be some function
spaces on $\rd$ with $X_1\hookrightarrow X_2$. Assume for their growth envelope functions
\[
\egv{X_1}(t)\ \sim \ \egv{X_2}(t),
\qquad t\in (0,\varepsilon),
\]
for some $\varepsilon>0$. This implies
\begin{equation}\label{uG-mon}
\uGv{X_1}\ \leq \  \uGv{X_2}
\end{equation}
for the corresponding indices, cf. \cite[Prop.~4.5]{Ha-crc}. Let us finally mention that the above Example~\ref{Prop-Lpq} then reads as
\begin{equation}
\envg(L_{p,q}(\rd)) = \left(t^{-\frac1p}, q\right),\quad \text{in particular,}\quad \envg(L_{p}(\rd)) = \left(t^{-\frac1p}, p\right),
\label{env-Lp}
\end{equation}
see \cite[Thm.~4.7]{Ha-crc}. 
\end{remark}

In view of the embeddings $L_u(\rd)\hookrightarrow\Mu(\rd)$, $p\leq u$, the monotonicity of the growth envelope function \eqref{eg-XX}, and the result for Lebesgue spaces \eqref{Lp-global} (with $p=q$) we immediately obtain
\[
\egv{\Mu(\rd)}(t) \geq \ C\ t^{-\frac1u},\quad t>0,
\]
whenever $0<p\leq u<\infty$. However, the final result obtained in \cite{HaSM3} in case of $p<u$ is surprisingly much stronger.

\begin{proposition}\label{eg-M} 
Let $0<p<u<\infty$. Then
$$
\egv{\Mu(\rd)}(t)=\infty, \quad t>0.
$$
\end{proposition}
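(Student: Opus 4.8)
The plan is to show that for each fixed $t>0$ the supremum $\sup_{\|f\mid \Mu(\rd)\|\le 1} f^*(t)$ is infinite by exhibiting, for every $N>0$, a function $f_N$ with $\|f_N\mid \Mu(\rd)\|\le 1$ but $f_N^*(t)\ge N$. Since $f^*$ is non-increasing, it suffices to produce such an $f_N$ whose distribution function $\mu(f_N,N)$ exceeds $t$, i.e. a function of height (at least) $N$ on a set of measure larger than $t$, while keeping the Morrey quasi-norm bounded.

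The key observation exploiting $p<u$ is a \emph{scaling and concentration} effect: the Morrey norm of the characteristic function of a cube $Q(x,r)$ behaves like $\varphi_{u,p}$ evaluated appropriately, and by spreading mass over many well-separated small cubes one can make the global measure of the support as large as one likes while the local averages entering the supremum in \eqref{Morrey-norm} stay controlled. Concretely, first I would compute $\|\charfn{Q(0,r)}\mid \Mu(\rd)\|$: for a dyadic cube $Q'$ the quantity $|Q'|^{1/u}(|Q'|^{-1}\int_{Q'}\charfn{Q(0,r)})^{1/p}$ is maximised (up to constants) by cubes of side comparable to $r$, giving $\|\charfn{Q(0,r)}\mid \Mu(\rd)\|\sim r^{\nd/u}$. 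Thus a single bump of height $N$ on a cube of side $r$ costs $N r^{\nd/u}$ in norm, so one must take $r^{\nd/u}\sim N^{-1}$, i.e. $r\sim N^{-u/\nd}$, which for large $N$ has measure $r^\nd\sim N^{-u}$, far smaller than $t$. The point is that instead one places $M$ disjoint translates $Q(x_i,r)$ of this cube, with the centres $x_i$ spread out over a huge region so that \emph{any} cube $Q'$ meets only a controlled number of them relative to its volume — more precisely, if the $x_i$ form a sufficiently sparse lattice, then for every cube $Q'$ one has $|Q'\cap \bigcup_i Q(x_i,r)| \le |Q'\cap Q(x_1,r)|$ is the worst case when $\ell(Q')\le r$, and for larger $Q'$ the density is diluted. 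This keeps $\|N\sum_i \charfn{Q(x_i,r)}\mid \Mu(\rd)\|\sim N r^{\nd/u}\lesssim 1$ while the total support has measure $M r^\nd$, which we can make as large as we wish by choosing $M$ large. Taking $M$ with $M r^\nd > t$ yields $f_N^*(t)\ge N$, and letting $N\to\infty$ gives $\egv{\Mu(\rd)}(t)=\infty$.

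In carrying this out I would: (1) record the norm estimate $\|\charfn{Q(0,r)}\mid \Mu(\rd)\|\sim r^{\nd/u}$ for $0<r\le 1$, checking both that small dyadic cubes inside $Q(0,r)$ give $\varphi(\ell(Q'))\cdot 1 = \ell(Q')^{\nd/u}\le r^{\nd/u}$ and that the extremal contribution comes from cubes of side $\sim r$; (2) fix $N$, set $r=r(N)\sim N^{-u/\nd}$ so that $N r^{\nd/u}\sim 1$; (3) choose a sparse family of centres $\{x_i\}_{i=1}^M$, say on the lattice $R\zd$ with $R$ large compared to $r$ and $M$ chosen so $M r^\nd\ge 2t$, and verify the crucial \emph{sparseness lemma}: for every cube $Q'$, $\sum_i |Q'\cap Q(x_i,r)|\le C\,|Q'|$ and the $L_p$-average localised to $Q'$ of $f_N=N\sum_i\charfn{Q(x_i,r)}$ is dominated by that of a single bump, so that $\|f_N\mid \Mu(\rd)\|\le C$; (4) conclude $f_N^\ast(t)\ge N/C'$ after rescaling the function by the constant $C$, hence let $N\to\infty$.

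The main obstacle is step (3), the sparseness estimate controlling the Morrey norm of the superposition of many disjoint bumps uniformly over all cubes $Q'$. One must be careful that a very large cube $Q'$ captures many of the $x_i$, but then $|Q'|$ is correspondingly large and $\varphi(\ell(Q'))\cdot(\text{average})^{1/p}$ stays bounded precisely because of the exponent mismatch $p<u$ (for $p=u$, i.e. $\Mu=L_p$, this mechanism fails — the measure of the support would be forced to be $\lesssim N^{-p}$, recovering the finite envelope $t^{-1/p}$). It is exactly here that $u>p$ is used: the "budget" $r^{\nd/u}$ for a single bump of unit mass in $\Mu$ is much cheaper than the $r^{\nd/p}$ one would pay in $L_p$, leaving room to stack arbitrarily many of them. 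Formalising "for every cube $Q'$" cleanly — rather than just for dyadic cubes or cubes of a fixed scale — is the part that needs genuine care, though it is a routine geometric covering argument once set up, and indeed this is essentially the computation underlying the known result \cite{HaSM3} which we may invoke.
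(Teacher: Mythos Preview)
The paper does not prove this proposition directly; it quotes it from \cite{HaSM3}. The relevant argument \emph{in} the paper is the proof of the generalisation Theorem~\ref{egM-LS}(a), which specialises to the present statement when $\varphi(t)=t^{\nd/u}$, $u>p$.

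Your overall strategy---height-$N$ bumps on many well-separated cubes of side $r\sim N^{-u/\nd}$---is sound and can be made to work, but the sparseness step as you state it is wrong. On a uniform lattice $R\zd$ with $M$ centres, a cube $Q'$ of side $\ell\in[R,\,M^{1/\nd}R]$ contains about $(\ell/R)^\nd$ bumps, and the Morrey quantity
\[
\ell^{\nd/u}\Bigl(\ell^{-\nd}\cdot(\ell/R)^\nd\,r^\nd\,N^p\Bigr)^{1/p}
\;=\;\ell^{\nd/u}\,(r/R)^{\nd/p}\,N
\]
\emph{grows} with $\ell$ until it saturates at $\ell\sim M^{1/\nd}R$; it is not ``dominated by that of a single bump''. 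The maximum is $\sim M^{1/u}(r/R)^{\nd(1/p-1/u)}$, so to keep the norm bounded you must take $R\gtrsim r\,M^{p/(\nd(u-p))}$, i.e.\ $R$ must be chosen \emph{after} $M$ and grow with it, not merely be ``large compared to $r$''. With this correction your construction goes through, but the heuristic you give (``$|Q'|$ is correspondingly large and\dots stays bounded precisely because of the exponent mismatch $p<u$'') is the wrong explanation: on a lattice the mismatch works against you at intermediate scales; what saves the day is the freedom to push $R$ out.

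The paper's construction in Theorem~\ref{egM-LS}(a) avoids this bookkeeping altogether. Rather than a lattice, it places \emph{infinitely many} translates of a single bump $g_R=\varphi(R)^{-1}\chi_{B(0,R)}$ along a line, with gaps $m_k$ growing so fast that any ball of radius in $[n_k,n_{k+1})$ meets at most $k$ of them, where the $n_k$ are chosen so that $\varphi(n_k)n_k^{-\nd/p}\le k^{-1/p}$ (for $\varphi(t)=t^{\nd/u}$ this is just $n_k\ge k^{u/(\nd(u-p))}$). The resulting function $f_R$ has norm $1$ but support of \emph{infinite} measure, so $f_R^\ast(t)=R^{-\nd/u}$ for \emph{every} $t>0$; letting $R\to 0$ finishes. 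This is cleaner: one function per $R$ witnesses unboundedness at all $t$ simultaneously, and the growing-gap geometry makes the norm estimate immediate rather than requiring a delicate balance between $M$ and $R$.
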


\begin{remark}
  Note that both the construction in \cite{HaSM3} and also the result heavily depend on the underlying $\rd$ where the spaces are defined. In case of spaces on a bounded domain $\Omega\subset\rd$ we refer to our results in \cite{HMSS-morrey} and \cite{HSS-morrey}.
\end{remark}

We collect what is already known about growth envelopes of spaces $\M(\rd)$, recall also \eqref{lim-exist}.

\begin{corollary}\label{egM-coll}
  Let $0<p<\infty$ and $\varphi\in\Gp$ with $\varphi(1)=1$.
  \begin{enumerate}[{\bfseries\upshape(i)}]
\item $\egv{\M(\rd)}$ is bounded if, and only if, $ \lim_{t\rightarrow 0^+} \varphi(t)>0$.
\item
Assume that $\varphi(t) \sim t^{\nd/p},\; t>1$. Then there exists some $c>0$ such that for all $t>0$,
\[
\egv{\M(\rd)}(t) \leq \ c\ t^{-\frac1p}.
\]
\item
  Assume that there exists some $u> p$ such that $\varphi(t) \leq c t^{\nd/u}$, $t>0$. Then
  \[
\egv{\M(\rd)}(t) = \infty, \quad t>0.
\]
\end{enumerate}
\end{corollary}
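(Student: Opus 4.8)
The plan is to obtain all three assertions as direct consequences of facts already recorded above: the behaviour of the growth envelope function under embeddings together with its characterisation of $L_\infty$-boundedness (Proposition~\ref{properties:EG}), the embedding statements collected in Remarks~\ref{R-emb-Mphi} and \ref{emb-Linf}, the value $\egv{L_p(\rd)}(t)\sim t^{-1/p}$ from Example~\ref{Prop-Lpq}, and the sharp vanishing result for classical Morrey spaces, Proposition~\ref{eg-M}. No new construction is needed here; the task is to select the right quotation in each case.

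For part~(i), recall that $\varphi\in\Gp$ is nondecreasing, so $\inf_{t>0}\varphi(t)=\lim_{t\to 0^+}\varphi(t)$, a limit which exists by \eqref{lim-exist}. By Proposition~\ref{properties:EG}(ii), $\egv{\M(\rd)}$ is bounded if, and only if, $\M(\rd)\hookrightarrow L_\infty(\rd)$; by \eqref{emb-Linf-sharp} this embedding holds precisely when $\inf_{t>0}\varphi(t)>0$, that is, when $\lim_{t\to 0^+}\varphi(t)>0$. Chaining these equivalences proves~(i).

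For part~(ii), the hypothesis $\varphi(t)\sim t^{\nd/p}$ for $t>1$ is exactly the condition appearing in \eqref{Mphi-Lr} that guarantees $\M(\rd)\hookrightarrow L_p(\rd)$. Applying Proposition~\ref{properties:EG}(i) with $X_1=\M(\rd)$ and $X_2=L_p(\rd)$, and then inserting \eqref{Lp-global} with $q=p$, yields $\egv{\M(\rd)}(t)\leq c\,\egv{L_p(\rd)}(t)\sim c\,t^{-1/p}$ for all $t>0$. For part~(iii), the hypothesis $\varphi(t)\leq c\,t^{\nd/u}$ with $u>p$ is the condition in \eqref{emb-Mu-Mphi}, so that $\Mu(\rd)\hookrightarrow\M(\rd)$ upon taking $r=p$ there. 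Since $p<u$, Proposition~\ref{eg-M} gives $\egv{\Mu(\rd)}(t)=\infty$ for every $t>0$, and then the monotonicity \eqref{eg-XX} forces $\egv{\M(\rd)}(t)=\infty$ for every $t>0$ as well.

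The only genuinely elementary point to verify is the observation used in~(i) that, for a nondecreasing $\varphi$, the infimum over $(0,\infty)$ coincides with the right-hand limit at the origin; everything else is a direct appeal to the results above. Accordingly, no real obstacle arises: the substantive input — the fact that $\egv{\Mu(\rd)}\equiv\infty$ for $p<u$, which drives part~(iii) and, through it, the ``proper Morrey behaviour'' in the main theorem — is the deep result of \cite{HaSM3} recorded here as Proposition~\ref{eg-M}, and this is assumed available.
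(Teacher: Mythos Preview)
Your proof is correct and follows exactly the same route as the paper: part~(i) combines Proposition~\ref{properties:EG}(ii) with \eqref{emb-Linf-sharp} via the identity $\inf_{t>0}\varphi(t)=\lim_{t\to 0^+}\varphi(t)$; part~(ii) uses \eqref{Mphi-Lr}, Proposition~\ref{properties:EG}(i) and \eqref{Lp-global}; and part~(iii) uses \eqref{emb-Mu-Mphi} together with Propositions~\ref{properties:EG}(i) and~\ref{eg-M}.
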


\begin{proof}
  Part (i) is a combination of Proposition~\ref{properties:EG}(ii) and Remark~\ref{emb-Linf}, in particular, \eqref{emb-Linf-sharp}, 
  since $\inf_{t>0} \varphi(t)= \lim_{t\rightarrow 0^+} \varphi(t)$. Likewise \eqref{Mphi-Lr}, Proposition~\ref{properties:EG}(i) and \eqref{Lp-global} (with $p=q$) lead to (ii). The last result is a consequence of  \eqref{emb-Mu-Mphi} together with Propositions~\ref{properties:EG}(i)
and \ref{eg-M}. 
\end{proof}

Now we study the general case $0<p<\infty$ and $\varphi\in\Gp$ with $\varphi(1)=1$. Recall that by \eqref{lim-exist} the limits $
  \lim_{t\to\infty} \varphi(t) t^{-\frac{\nd}{p}} \in [0,1]$ and  $\lim_{t\to 0^+} \varphi(t) \in [0,1]$ always exist. In view of Corollary~\ref{egM-coll}(i) we may always assume $ \lim_{t\rightarrow 0^+} \varphi(t)=0$ now.

We begin with some preparatory lemma, which might also be of independent interest.

\begin{lemma}\label{clever-lemma-LS}
Let  $0<p<\infty$ and   $\varphi\in \mathcal{G}_p$ with $\varphi(1)=1$ and $ \lim_{t\rightarrow 0^+} \varphi(t)=0$. 
Let $A_1,\ldots , A_n$ be  measurable sets in $\rd$ of finite measure, which might be assumed to be pairwise disjoint.  Let $a_1>a_2>\ldots > a_n>0$ and  let 
\[ f(x)= \sum_{j=1}^n a_j\chi_{A_j}(x)\]
be a simple function such that $\| f|L_p(\rd)\|=1$. Then there exists a simple function $g$ on $\rd$ such that  $g^*(t)=f^*(t)$, $t>0$,  and  $\|g|\M(\rd)\|\le 1$. 
\end{lemma}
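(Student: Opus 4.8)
The plan is to keep the rearrangement $f^\ast$ fixed and only redistribute the mass of $f$ in $\rd$ so that the resulting $g$ has $\M(\rd)$-quasi-norm at most $1$. Since $\|f\mid L_p(\rd)\|=1$ is equivalent to $\sum_{j=1}^n a_j^p|A_j|=1$, the only data that matter are the values $a_1>\dots>a_n>0$ and the measures $m_j:=|A_j|$. Accordingly I would discard the given sets $A_j$ and choose new pairwise disjoint sets $B_1,\dots,B_n$ with $|B_j|=m_j$ and set $g=\sum_{j=1}^n a_j\chi_{B_j}$; then automatically $g^\ast=f^\ast$. The freedom lies in \emph{where} to place the $B_j$, and the idea is to nest them: make $B_1$ a small cube around the origin carrying the largest value $a_1$, then let $B_2$ sit in a thin shell just outside, and so on, so that for every cube $Q$ the portion of $g$ of large amplitude that $Q$ sees is controlled by $|Q|$.

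\textbf{Construction.} Concretely I would take cubes $Q(0,r_0)\subset Q(0,r_1)\subset\dots\subset Q(0,r_n)$ centred at the origin and put $B_1=Q(0,r_0)$, $B_{j}=Q(0,r_{j-1})\setminus Q(0,r_{j-2})$ for $j\ge 2$ (re-indexing so that $|B_j|=m_j$), choosing the radii recursively by $r_0^\nd\sim m_1$ and $r_{j-1}^\nd-r_{j-2}^\nd\sim m_j$ up to the fixed dimensional constant relating side length and measure of a cube. The key point of this arrangement is that for any dyadic cube $Q$, if $Q$ meets $B_j$ then it contains a definite fraction of $B_1\cup\dots\cup B_j$, hence $\ell(Q)^\nd\gtrsim \sum_{i\le j}m_i$; combined with the fact that on $Q$ the function $g$ only takes values $\le a_j$ on a set of measure $\lesssim |Q|$ and values $a_i$ with $i>j$ (so $a_i<a_j$) on the rest, one gets
\[
\varphi(\ell(Q))\Bigl(\frac{1}{|Q|}\int_Q |g|^p\Bigr)^{1/p}
\;\lesssim\; \varphi(\ell(Q))\,\Bigl(\frac{1}{|Q|}\sum_{i\le j} a_i^p m_i\Bigr)^{1/p}.
\]
Here one then uses $\varphi\in\Gp$: from \eqref{Gp-def}, since $\ell(Q)^\nd\gtrsim \sum_{i\le j}m_i$, we have $\varphi(\ell(Q))\lesssim \varphi\bigl((\sum_{i\le j}m_i)^{1/\nd}\bigr)\cdot\bigl(\ell(Q)^\nd/\sum_{i\le j}m_i\bigr)^{1/p}$ when $\ell(Q)$ is the larger, and together with $\lim_{t\to0^+}\varphi(t)=0$ one bounds $\varphi\bigl((\sum_{i\le j}m_i)^{1/\nd}\bigr)\le 1$ whenever $\sum_{i\le j}m_i$ is small; the normalisation $\sum a_i^p m_i=1$ and $a_i\ge a_j$ then close the estimate at $\lesssim 1$. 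A careful bookkeeping of the dimensional constants (absorbed by a final rescaling, or by inserting constants into the radii) upgrades $\lesssim 1$ to $\le 1$.

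\textbf{Main obstacle.} I expect the delicate point to be the uniform control over \emph{all} cubes $Q$ simultaneously — not just the ``extreme'' ones. A cube can be much larger than the whole support of $g$ (then $\varphi(\ell(Q))\le\varphi(\ell(Q))$ is large but $\frac1{|Q|}\int_Q|g|^p$ is tiny, and one needs \eqref{Gp-tlarge}, i.e. $\varphi(t)t^{-\nd/p}\le 1$ for $t\ge1$, together with $\|g\mid L_p\|=1$, to win — this is exactly where $L_p(\rd)\hookrightarrow\M(\rd)$ would fail in general and where the hypothesis $\lim_{t\to0^+}\varphi(t)=0$ is not enough by itself, so one must also exploit that the support has been made to sit near the origin at scale comparable to its measure); or a cube can be tiny and sit deep inside $B_1$ (then $\frac1{|Q|}\int_Q|g|^p=a_1^p$ and one needs $\varphi(\ell(Q))a_1\le1$, which follows from $\varphi(\ell(Q))\le\varphi(r_0)\lesssim\varphi\bigl(m_1^{1/\nd}\bigr)$ and $a_1^p m_1\le1$ and $\lim_{t\to0^+}\varphi(t)=0$ forcing $\varphi(m_1^{1/\nd})$ small when $m_1$ is small — but if $m_1$ is \emph{not} small the argument instead uses $\varphi(r_0)\lesssim r_0^{\nd/p}\sim m_1^{1/p}$ from \eqref{Gp-def}). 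Handling the interpolating middle scales and the off-centre cubes uniformly, and making sure every implied constant can ultimately be taken to be $1$, is where the real work sits; splitting the analysis into the cases ``$Q$ small versus large relative to $\supp g$'' and ``$\varphi(\ell(Q))$ governed by the lower versus upper bound in \eqref{Gp-def}'' should make it manageable.
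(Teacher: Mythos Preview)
Your nested, concentric construction does not work, and the flaw is not a matter of bookkeeping constants but of strategy. Concentrating all the mass near a single point is exactly the wrong move for a Morrey norm: if $\varphi(t)\gg t^{\nd/p}$ for small $t$ (which is the generic situation once $\lim_{t\to0^+}\varphi(t)t^{-\nd/p}>1$), a small cube that swallows a large-amplitude block sees a big average \emph{and} a $\varphi$-factor that is not small enough to compensate. Concretely, take $\nd=1$, $p=1$, $\varphi(t)=t^{1/2}$ (which is in $\mathcal{G}_1$, has $\varphi(1)=1$ and $\lim_{t\to0^+}\varphi(t)=0$), and let $n=1$, $a_1=M$, $|A_1|=1/M$, so $\|f\mid L_1\|=1$. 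Your construction gives $g=M\,\chi_{Q(0,1/M)}$; then for $Q=Q(0,1/M)$ one has
\[
\varphi(\ell(Q))\Bigl(\frac{1}{|Q|}\int_Q|g|\Bigr)=M^{-1/2}\cdot M=M^{1/2},
\]
which is unbounded as $M\to\infty$. No rescaling can fix this, since the example already has $\|f\mid L_p\|=1$. The claim ``if $Q$ meets $B_j$ then $\ell(Q)^\nd\gtrsim\sum_{i\le j}m_i$'' is also plainly false for a small $Q$ sitting inside the shell $B_j$, $j\ge2$; and the ``large $Q$'' case you sketch uses $\varphi(t)t^{-\nd/p}\le1$ for $t\ge1$, which is irrelevant when the whole support of $g$ has diameter $\ll1$.

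The paper does the opposite: it \emph{disperses} the mass. For each level $j$ it picks a scale $\tau_j$ so small that $\varphi(\tau_j)a_j\le1$ (this is where $\lim_{t\to0^+}\varphi(t)=0$ is used), chops $A_j$ into $m_j$ cubes of side $\tau_j$ with $m_j\tau_j^\nd=|A_j|$, and then places all these cubes at locations $x_k=(2^kA,0,\dots,0)$ so far apart that any cube $Q$ with $\ell(Q)\le1$ meets at most one of them. For such $Q$ the estimate reduces to $\varphi(\tau_j)a_j\le1$ (using the monotonicity of $\varphi$ or of $\varphi(t)t^{-\nd/p}$ depending on whether $\ell(Q)\le\tau_j$ or $\ell(Q)>\tau_j$); for $\ell(Q)\ge1$ one uses $\varphi(\ell(Q))|Q|^{-1/p}\le\varphi(1)=1$ together with $\|g\mid L_p\|=\|f\mid L_p\|=1$. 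The moral: the hypothesis $\lim_{t\to0^+}\varphi(t)=0$ is not there to make $\varphi(\ell(Q))$ small for an arbitrary cube hitting your support, but to guarantee the existence of the fine scale $\tau_j$ at which each amplitude $a_j$ becomes Morrey-admissible.
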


\begin{proof}
Let $t_j=\sup \{t>0: a_j \varphi(t) \le 1\}$, $j=1,\ldots, n$. {Please note that it may happen that $t_j=\infty$ or that $t_j<\infty$ but $a_j \varphi(t_j)>1$.}     
If $t_j {\ge} |A_j|^\frac{1}{\nd}$, then we take  { $m_j=2$}, otherwise let $m_j$ be the smallest integer such that {$(m_j-1)^\frac{1}{\nd}t_j\ge |A_j|^\frac{1}{\nd}$}. 
{ We put} 
\[
\tau_j= 	\left(\frac{|A_j|}{m_j}\right)^\frac{1}{\nd} , \quad j=1, \ldots, n. 
\]
{Thus $\tau_j < t_j $} which implies $\varphi(\tau_j) a_j \leq 1$. Let   $M_j=m_1+\ldots +m_j$,   for $j=1, \ldots, n$, $M_0=0$, { $A= 2+  \max\{|A_j|:j=1,\ldots, n\}$}, and let $x_k=(2^k A,0,\ldots, 0)$, $k\in \nat$. { We consider cubes $Q(x_k,\tau_j)$  centred at $x_k$ with edges parallel to coordinate axes and side length $\tau_j$, that is, with volume $|A_j|/m_j$.}   We define the following simple function 
\begin{align*}
	g(x) = \begin{cases}
            a_1 & \text{if}\qquad x\in E_1=\bigcup_{k=1}^{M_1}Q(x_k,\tau_1), \\
            a_2 & \text{if}\qquad x\in E_2=\bigcup_{k=M_1+1}^{M_2}Q(x_k,\tau_2), \\
             & \vdots \\
            a_m & \text{if}\qquad x\in E_m=\bigcup_{k=M_{m-1}+1}^{M_m}Q(x_k,\tau_m), \\
            0& \text{otherwise}.
           \end{cases}
\end{align*}
The cubes $Q(x_k,\tau_j)$ %, $j\in\{1,\ldots n\}, k\in \{M_{j-1}+1, \ldots, M_j\}$, 
are pairwise disjoint, thus  the sets $E_j$ %, $j=1, \ldots, n$,  
are also  pairwise disjoint; therefore  $g$ is a well-defined simple function. Note also that
$$
|E_j|=\sum_{k=M_{j-1}+1}^{M_j} |Q(x_k,\tau_j)|= m_j \tau_j^{\nd}=|A_j|, \quad j=1,\ldots, n,
$$
hence $g^*(t)=f^*(t)$ for any $t>0$.

Let $Q$ be a cube such that $|Q|\le 1$. {The distance between two different cubes $Q(x_k,\tau_j)$ is bigger {than} $2$ so 
there is at most %set $E_j$ with non-empty intersection with $Q$ and moreover 
one cube $Q(x_k,\tau_j)\subset E_j$ with non-empty intersection with $Q$. }

If $Q\cap  Q(x_k,\tau_j)\not= \emptyset$ and $|Q|\le |Q(x_k,\tau_j) |$, then by the monotonicity of $\varphi$,
\begin{align*}
	\varphi(\ell(Q))\left( \frac{1}{|Q|}\int_Q|g(x)|^p \dint x\right)^{1/p}\le \varphi(\ell(Q(x_k,\tau_j)))\left( \frac{1}{|Q|}\int_Q|g(x)|^p \dint x\right)^{1/p}
\le \varphi(\tau_j) a_j \le 1.
\end{align*} 

If $Q\cap  Q(x_k,\tau_j)\not= \emptyset$ and $|Q(x_k,\tau_j) |<|Q|\le 1$, then
\begin{align*}
	 \varphi(\ell(Q))\left( \frac{1}{|Q|}  \int_Q|g(x)|^p \dint x\right)^{1/p}  \le &   \frac{\varphi(\ell(Q))|Q|^{-1/p}}{\varphi(\ell(Q(x_k,\tau_j)))|Q(x_k,\tau_j))|^{-1/p}} \times  \\
    &	\times \varphi(\ell(Q(x_k,\tau_j)))\left( \frac{1}{|Q(x_k,\tau_j))|}\int_{Q\cap Q(x_k,\tau_j))}|g(x)|^p \dint x\right)^{1/p} \\
	 \le & \varphi(\tau_j) a_j \le 1 \nonumber
\end{align*}
since the function $\varphi(t)t^{-\nd/p}$ is decreasing. 

If $|Q|\ge 1$, then $\varphi(\ell(Q))|Q|^{-1/p} \le \varphi(1) = 1$, again due to the monotonicity of $\varphi(t)t^{-\nd/p}$, and hence
\begin{equation*}
	\varphi(\ell(Q))\left( \frac{1}{|Q|}  \int_Q|g(x)|^p \dint x\right)^{1/p}\le \left( \int_Q|g(x)|^p \dint x\right)^{1/p} \le \left(  \int_{\rd}|g(x)|^p \dint x\right)^{1/p} = \left(  \int_{\rd}|f(x)|^p \dint x\right)^{1/p} =1 .
\end{equation*}
Consequently, taking the supremum over all cubes $Q$, we obtain $\|g| \M(\rd)\|\leq 1$.
  \end{proof}

%%%%%%%%%%%%%%%%%%%%%%%%%%%%%%%%%%%

  Now we are ready to characterise completely the behaviour of the growth envelope functions for spaces $\M(\rd)$ in the remaining cases apart from Corollary~\ref{egM-coll}(i).
  
%%%%%%%%%%%%%%%%%%%%%%%%%%%%%%%%%%%%%%%%%%%%%
%%%%%%%%%%%%%%%%%%
\begin{theorem}\label{egM-LS}
Let  $0<p<\infty$,  $\varphi\in \mathcal{G}_p$ with $\varphi(1)=1$ and $\lim_{t\rightarrow 0^+} \varphi(t)=0$. 
\begin{enumerate}[{\bfseries\upshape (a)}]  
\item	If $\lim_{t\rightarrow \infty} t^{-\nd/p}\varphi(t)=0$, then 
	\begin{equation}\label{00}
	\egv{\M(\rd)}(t)=\infty , \qquad t>0. 
	\end{equation}
\item
        If $\lim_{t\rightarrow \infty} t^{-\nd/p}\varphi(t)=c>0$, then 
	\begin{equation}\label{0c} 
	%c \varphi(t^\frac{1}{d})^{-1}\le \egv{\M(\rd)}(t) \violet{\le C t^{-1/p}} , \qquad t>0.
		\egv{\M(\rd)}(t)\sim t^{-1/p} , \qquad t>0.
	 \end{equation}
\end{enumerate}
\end{theorem}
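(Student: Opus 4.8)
The strategy is to reduce both parts to the tools already assembled: the embedding criterion \eqref{emb-Mphi}, the known result for classical Morrey spaces (Proposition~\ref{eg-M}), the Lorentz-space computation \eqref{Lp-global}, and — crucially for the upper bounds — the rearrangement Lemma~\ref{clever-lemma-LS}. The guiding observation is that under the hypothesis $\lim_{t\to 0^+}\varphi(t)=0$ together with \eqref{Gp-tlarge}, \eqref{Gp-tsmall}, the global behaviour of $\varphi$ near infinity decides whether $\M(\rd)$ sits above a proper Morrey space (giving $\egv{\M(\rd)}\equiv\infty$) or is sandwiched by $L_p$-type spaces (giving the $t^{-1/p}$ rate).

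\emph{Part (a).} Here I would argue that $\lim_{t\to\infty} t^{-\nd/p}\varphi(t)=0$ forces the existence of some $u>p$ with $\varphi(t)\le c\,t^{\nd/u}$ for all $t>0$, so that Corollary~\ref{egM-coll}(iii) (equivalently \eqref{emb-Mu-Mphi} plus Propositions~\ref{properties:EG}(i) and \ref{eg-M}) applies and yields $\egv{\M(\rd)}(t)=\infty$. The one point that needs care is manufacturing such a $u$: for $t\le 1$ we already have $\varphi(t)\le 1$ and $\varphi(t)\le t^{\nd/p}\le t^{\nd/u}$ by \eqref{Gp-tsmall} for any $u\ge p$; for $t\ge 1$ the condition $t^{-\nd/p}\varphi(t)\to 0$ lets us pick $R$ with $t^{-\nd/p}\varphi(t)\le 1$ for $t\ge R$, and since $\varphi(t)t^{-\nd/p}$ is nonincreasing, on $[1,R]$ the quotient is bounded; combining with the monotonicity of $\varphi$ one extracts a genuine power $t^{\nd/u}$, $u>p$, dominating $\varphi$. (If no such $u$ existed, $\varphi(t)t^{-\nd/p}$ would be bounded below, contradicting the limit being $0$.) Once $\varphi(t)\le c\,t^{\nd/u}$ is in hand, the chain $\mathcal{M}_{u,u}(\rd)=L_u(\rd)\hookrightarrow\M(\rd)$ — or better $\mathcal{M}_{u,r}(\rd)\hookrightarrow\M(\rd)$ for $p\le r\le u$ — combined with Proposition~\ref{eg-M} closes the argument.

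\emph{Part (b), lower bound.} When $\lim_{t\to\infty} t^{-\nd/p}\varphi(t)=c>0$, monotonicity of $\varphi(t)t^{-\nd/p}$ gives $\varphi(t)\ge c\,t^{\nd/p}$ for all large $t$, i.e. $\varphi(t)\sim t^{\nd/p}$ for $t>1$. By \eqref{Mphi-Lr} this is exactly the case $\M(\rd)\hookrightarrow L_p(\rd)$... wait, the direction I actually need is the reverse embedding $L_p(\rd)\hookrightarrow\M(\rd)$: applying \eqref{emb-Mphi} with $\varphi_1(t)=t^{\nd/p}$ (the Lebesgue function) and $\varphi_2=\varphi$ requires $t^{\nd/p}\ge C\varphi(t)$ for all $t>0$, which holds by \eqref{Gp-tlarge} and \eqref{Gp-tsmall}. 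Hence $L_p(\rd)\hookrightarrow\M(\rd)$, and Proposition~\ref{properties:EG}(i) together with \eqref{Lp-global} (with $q=p$) gives $\egv{\M(\rd)}(t)\gtrsim t^{-1/p}$ for all $t>0$.

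\emph{Part (b), upper bound — the main obstacle.} This is where Lemma~\ref{clever-lemma-LS} does the heavy lifting. I want to show $\egv{\M(\rd)}(t)\lesssim t^{-1/p}$; equivalently, that every $f$ with $\|f\mid\M(\rd)\|\le 1$ satisfies $f^*(t)\le c\,t^{-1/p}$. A first reduction: since $\varphi\sim t^{\nd/p}$ for $t>1$, testing the Morrey norm on a single large cube containing the super-level set $\{|f|>f^*(t)\}$ (whose measure is $\le t$) gives a bound of Lebesgue type when $t$ is not too small — so the delicate range is small $t$, where the local behaviour of $\varphi$ (with $\varphi(t)\to 0$) could a priori allow $f^*(t)$ to blow up faster than $t^{-1/p}$. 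The claim is that it cannot: if it did, one could find simple functions $f$ normalised in $L_p$ but with, say, a single huge block $a\chi_A$ forcing $\|f\mid\M(\rd)\|$ small yet $f^*$ large at some $t$; but Lemma~\ref{clever-lemma-LS} says precisely that any $L_p$-normalised simple $f$ admits a rearrangement-equal $g$ with $\|g\mid\M(\rd)\|\le 1$, i.e. $\egv{\M(\rd)}(t)\ge f^*(t)=g^*(t)$ for all such $f$ — the reverse of what I want. So the upper bound must instead come from the Morrey norm \emph{directly controlling} $f^*$. Concretely: fix $t\in(0,1)$, set $\sigma=f^*(t)$ so $|\{|f|>\sigma/2\}|\ge$ something comparable to $t$ (or work with $\sigma=f^*(t)$ and the set $A=\{|f|\ge\sigma\}$, $|A|\le t$); cover a dyadic cube $Q$ of volume $\sim |A|$... but $A$ need not sit inside a small cube. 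The right move is the standard one: for each dyadic cube $Q$ with $|Q|\le 1$, the Morrey bound gives $\int_Q|f|^p\le |Q|\,\varphi(\ell(Q))^{-p}$; choosing, for the level $\sigma$, the scale $\ell$ with $\sigma\varphi(\ell)\sim 1$ (possible since $\varphi$ is continuous-ish / doubling and $\varphi(t)\to0$, $\varphi(\infty)=\infty$), a Chebyshev/covering argument over disjoint dyadic cubes of that side length shows $\mu(f,\sigma)\lesssim \ell^{\nd}\varphi(\ell)^{p}\sigma^{p}\cdot(\text{const})$... and then using $\varphi(\ell)\gtrsim \ell^{\nd/p}$ for $\ell\ge 1$ versus the small-scale estimate one arrives at $\mu(f,\sigma)\lesssim \sigma^{-p}$, i.e. $f^*(t)\lesssim t^{-1/p}$. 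The technical crux — and the part I expect to be fiddly — is handling the scale selection when $\sigma$ is large (forcing $\ell<1$, where $\varphi(\ell)<1$): one must check that the gain from summing many small cubes still beats the loss $\varphi(\ell)^{-p}$, and this is exactly where $\lim_{t\to\infty}t^{-\nd/p}\varphi(t)=c>0$ (equivalently $\varphi(\ell)\le \ell^{\nd/p}$ globally, with no extra decay) is used: it prevents $\varphi$ from being too small relative to the volume factor. I would phrase this cleanly by invoking \eqref{Mphi-Lr} once more — since $\varphi(t)\sim t^{\nd/p}$ for $t\ge 1$, and the reverse embedding of \eqref{emb-Mu-Mphi} type fails, the space $\M(\rd)$ is squeezed between $L_p(\rd)$ and, say, $\mathcal{M}_{\varphi,p}$ with $\varphi$ replaced by its "floored" version $\min(t^{\nd/p},\varphi(t))$ — but ultimately the self-contained route is the direct distribution-function estimate above, and I expect that to be the one genuinely requiring work, with everything else following from the cited embeddings and Proposition~\ref{properties:EG}.
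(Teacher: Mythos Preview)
Your proposal has genuine gaps in both parts.

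\textbf{Part (a).} The claim that $\lim_{t\to\infty} t^{-\nd/p}\varphi(t)=0$ forces the existence of some $u>p$ with $\varphi(t)\le c\,t^{\nd/u}$ for all $t>0$ is false. Take $\varphi(t)=t^{\nd/p}(\log(L+t))^a$ with $a<0$ from Example~\ref{exm-Gp}(iv) (case $u=p$): then $t^{-\nd/p}\varphi(t)=(\log(L+t))^a\to 0$ and $\lim_{t\to 0^+}\varphi(t)=0$, yet for every $u>p$ one has $\varphi(t)/t^{\nd/u}=t^{\nd/p-\nd/u}(\log(L+t))^a\to\infty$, so no such dominating power exists. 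Your contrapositive (``if no such $u$ existed, $\varphi(t)t^{-\nd/p}$ would be bounded below'') is a non sequitur. The paper instead builds explicit test functions: indicators $g_R=\varphi(R)^{-1}\chi_{B(0,R)}$, $0<R\le 1$, each of Morrey norm $1$; then uses $\lim_{t\to\infty}t^{-\nd/p}\varphi(t)=0$ to place infinitely many disjoint translates of $g_R$ so sparsely that the resulting function $f_R$ still has Morrey norm $1$ but $\mu(f_R,\sigma)=\infty$ for every $\sigma<\varphi(R)^{-1}$. Since $\varphi(R)^{-1}\to\infty$ as $R\to 0^+$, the supremum over $R$ gives \eqref{00}.

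\textbf{Part (b).} You have the roles of the two estimates reversed. The \emph{upper} bound $\egv{\M(\rd)}(t)\lesssim t^{-1/p}$ is the easy one: the hypothesis $\lim_{t\to\infty}t^{-\nd/p}\varphi(t)>0$ together with \eqref{Gp-tlarge} gives $\varphi(t)\sim t^{\nd/p}$ for $t\ge 1$, so \eqref{Mphi-Lr} yields $\M(\rd)\hookrightarrow L_p(\rd)$, and Corollary~\ref{egM-coll}(ii) finishes it. Your attempt at the \emph{lower} bound via $L_p(\rd)\hookrightarrow\M(\rd)$ fails: by \eqref{emb-Mphi} that embedding requires $t^{\nd/p}\ge C\varphi(t)$ for all $t>0$, but \eqref{Gp-tsmall} gives $\varphi(t)\ge t^{\nd/p}$ for $t\le 1$, and in general $\varphi(t)t^{-\nd/p}\to\infty$ as $t\to 0^+$ (e.g.\ $\varphi=\varphi_{u,p}$ with $u>p$ from Example~\ref{exm-Gp}(i)). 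This is precisely why Lemma~\ref{clever-lemma-LS} is needed, and it is needed for the \emph{lower} bound: given any $s>0$ one takes $f_s=s^{-1/p}\chi_{A_s}$ with $|A_s|=s$, so $\|f_s\mid L_p\|=1$; the lemma produces $g_s\in\M(\rd)$ with $\|g_s\mid\M(\rd)\|\le 1$ and $g_s^*=f_s^*=s^{-1/p}\chi_{[0,s)}$, whence $\egv{\M(\rd)}(t)\ge\sup_{s>t}s^{-1/p}=t^{-1/p}$.
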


\begin{proof}
	\emph{Step 1.} First we assume that $\lim_{t\rightarrow \infty} t^{-\nd/p}\varphi(t)=0$. 
	Let
	 \[g_{R}(x) = \frac{1}{\varphi(R)}\chi_{B(0,R)}(x), \qquad 0< R \le 1 .\] 
	If $0<r\le R$, then for all $x\in\rd$,
	\[ \varphi(r)\left(\frac{1}{|B(x,r)|}\int_{B(x,r)} |g_R(y)|^p \dint y \right)^\frac{1}{p} \le \frac{\varphi(r)}{\varphi(R)}\le 1 .  \]
		If $R<r$, then for all $x\in\rd$,
	        \[ \varphi(r)\left(\frac{1}{|B(x,r)|}\int_{B(x,r)} |g_R(y)|^p \dint y \right)^\frac{1}{p} \le \frac{\varphi(r)}{\varphi(R)} \left(\frac{|B(x,r) \cap B(0,R)|}{|B(x,r)|}\right)^\frac{1}{p} \le 
 \frac{\varphi(r)}{\varphi(R)} \left(\frac{|B(x,R)|}{|B(x,r)|}\right)^\frac{1}{p}
                \leq 1 
	\]
by \eqref{Gp-def}. So 
	\[
	\|g_R|\mathcal{M}_{\varphi,p}(\rd)\|=  1 
	\]
	for any $0<R\le 1$. 
	
%Please note that by  assumptions  $t^{-d/p} \varphi(t) \le \varphi(1)=1$ if %$t\ge 1$ and $\varphi(t) \le \varphi(1)=1$ if $t\le 1$. 
 %\marginpar{\green{Isn't it missing a $1/p$ in the power of $k$? I mean: $n_k^{-\frac{\nd}{p}}\varphi(n_k)\le {k}^{-\frac{1}{p}}$ }}
Since $\lim_{t\rightarrow \infty} t^{-\nd/p}\varphi(t)=0$,  we can choose  a strictly increasing  sequence $(n_k)_{k\in\nat}$, $n_k\in \mathbb{N}$, such that $n_k^{-\frac{\nd}{p}}\varphi(n_k)\le k^{-\frac1p}$, $k=2,3,...$, and  $n_1=1$. We take a sequence $(m_k)_{k\in\nat}$, $m_k\in \mathbb{N}_0$,  defined inductively by the following relation
\begin{align*}
	m_1 = &\; 0,\\
	m_2\ge & \;2(n_2+1), \\ %2\max (2,n_2)
	\vdots & \;\\
	m_k\ge & \; 2\max \big(m_{k-1}+n_2+1, m_{k-2}+n_3+1, \ldots,  m_2+n_{k-1}+1, n_k+1\big).
	%2\max \big(m_{j-1}+\max(2,n_2), m_{j-2}+n_3, \ldots,  m_2+n_{j-1}, n_j\big). 
\end{align*}
Let
\[ f_R(x) =\sum_{k=1}^\infty g_R\big(x-(m_k,0,\ldots,0)\big) .\]
Then any ball $B(x,r)$, $0<r\le 1$,  intersects at most one support $\supp g_R\big(x-(m_k,0,\ldots,0)\big)$,  and any ball $B(x,r)$, $n_{k}\le r < n_{k+1}$, $k=1,2,3,\ldots$, intersects at most $k$ supports $\supp g_R\big(x-(m_k,0,\ldots,0)\big)$. In consequence,
% by Step 1, 
\begin{eqnarray*}
		\|f_R|\mathcal{M}_{\varphi,p}(\rd)\|=  1 .
\end{eqnarray*} 
On the other hand it should be clear that  for any positive $\sigma$ with $0<\sigma < \varphi(R)^{-1}$,  we have %there exist $c_\sigma>0$ such that
\begin{equation*}
	\mu(f_R,\sigma)= |\{x\in \rd:\; |f_R(x)|>\sigma\}| = \sum_{k=1}^{\infty} |B(m_k,R)| = \infty .
\end{equation*} 
Now taking the supremum over $R$ we get 
\[\egv{\M(\rd)}(t)\ge \sup_{0<R\le 1} f^*_R(t)\ge \sup_{0<R\le 1} \frac{1}{\varphi (R)} =\infty , \qquad t>0,\]
since  $\lim_{t\rightarrow 0^+} \varphi(t)=0$.

\emph{Step 2.}
Now assume $\lim_{t\rightarrow \infty} t^{-\nd/p}\varphi(t)=c>0$. Then $\varphi(t) \sim t^{\nd/p},\; t>1$, and thus the upper estimate in \eqref{0c}  follows from Corollary \ref{egM-coll}(ii).
Now we deal with the lower estimate in  \eqref{0c} and apply Lemma~\ref{clever-lemma-LS}. Let $s>0$ and $ A_s\subset \rd $ with $|A_s|=s$. Consider the simple function
\begin{equation}
f_s= s^{-\frac1p} \charfn{A_s}.
\label{f_sr}
\end{equation}
Then 
\[
f_s^\ast(t) = s^{-\frac1p} \charfn{[0,s)}(t), \quad t>0, 
  \qquad\text{and}\qquad \left\| f_s|L_p(\rd)\right\| =1. 
  %\label{fsstar}
  \]
Now, by Lemma~\ref{clever-lemma-LS}, for any $s>0$ there exists some $g_s\in \M(\rd)$ with $\|g_s | \M(\rd)\|\leq 1$ and $g_s^\ast(t)=f_s^\ast(t)$, $t>0$. Consequently,
\begin{align*}
  \egv{\M(\rd)}(t) \geq \sup_{s>0} g_s^\ast(t) =  \sup_{s>0} s^{-\frac1p} \charfn{[0,s)}(t) =  \sup_{s>t} s^{-\frac1p} = t^{-\frac1p}. 
\end{align*}
\end{proof}

%%%%%%%%%%%%%%%%%%%%%%%%%%%%%%%
%%%%%%%%%%%%%%%%%%%%%%%%%%%%%%%

\begin{corollary}\label{egM-finite}
  Let $0<p<\infty$ and $\varphi\in\Gp$ with $\varphi(1)=1$. Assume that 
  $ \lim_{t\rightarrow 0^+} \varphi(t)=0$. Then
  \begin{equation*}
    \egv{\M(\rd)}(t) = \infty \quad \text{for all}\quad t>0\qquad\text{if, and only if,}\qquad \lim_{t\rightarrow \infty} t^{-\nd/p}\varphi(t)=0. 
    \end{equation*}
\end{corollary}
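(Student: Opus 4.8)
The plan is to derive this as an immediate consequence of Theorem~\ref{egM-LS}, using the fact recorded in \eqref{lim-exist} that under the standing hypotheses the limit $c:=\lim_{t\rightarrow\infty} t^{-\nd/p}\varphi(t)$ always exists and lies in $[0,1]$. This reduces the proof to a case distinction on whether $c=0$ or $c>0$, with no further computation required.

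First I would invoke \eqref{lim-exist} to fix the value $c\in[0,1]$. For the implication ``$\Leftarrow$'', assume $c=0$; then Theorem~\ref{egM-LS}(a) directly yields $\egv{\M(\rd)}(t)=\infty$ for all $t>0$, which is the claim. For the implication ``$\Rightarrow$'', I would argue by contraposition: suppose $c>0$, equivalently (by the monotonicity of $\varphi(t)t^{-\nd/p}$ and $\varphi(1)=1$) that $\varphi(t)\sim t^{\nd/p}$ for $t>1$. Then Theorem~\ref{egM-LS}(b) gives $\egv{\M(\rd)}(t)\sim t^{-1/p}$, $t>0$, and in particular $\egv{\M(\rd)}(t)<\infty$ for every $t>0$, so the condition $\egv{\M(\rd)}(t)=\infty$ for all $t>0$ fails. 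Combining the two directions establishes the equivalence.

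Since the two cases $c=0$ and $c>0$ are exhaustive by \eqref{lim-exist}, and Theorem~\ref{egM-LS} already treats precisely these two cases, there is essentially no obstacle here; the corollary is merely a reformulation that isolates the dichotomy ``envelope identically $\infty$ versus envelope finite everywhere.'' The only point deserving a word of care is the passage between the statements ``$c>0$'' and ``$\varphi(t)\sim t^{\nd/p}$ for $t>1$'', which is exactly the reduction already used at the start of Step~2 of the proof of Theorem~\ref{egM-LS} and follows from \eqref{Gp-tlarge} together with the monotonicity of $t\mapsto\varphi(t)t^{-\nd/p}$.
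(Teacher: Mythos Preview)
Your proposal is correct and follows essentially the same approach as the paper: the sufficiency is obtained from Theorem~\ref{egM-LS}(a), the necessity by contraposition from Theorem~\ref{egM-LS}(b), and the dichotomy is justified via \eqref{lim-exist}. The paper's proof is slightly terser but structurally identical, also remarking that Corollary~\ref{egM-coll}(ii) with \eqref{Gp-tlarge} could serve as an alternative for the necessity.
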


\begin{proof}
  The sufficiency is already covered by Theorem~\ref{egM-LS}(a), while the necessity is a consequence of Theorem~\ref{egM-LS}(b) (or Corollary~\ref{egM-coll}(ii) in view of \eqref{Gp-tlarge}). Note that by \eqref{lim-exist} there are no further possibilities for $\varphi\in \Gp$.
\end{proof}

\begin{remark}\label{only-choices}
  Let us briefly summarise this complete characterisation of the growth envelope function for $\M(\rd)$ when $0<p<\infty$, $\varphi\in\Gp$ with $\varphi(1)=1$. In view of Corollary~\ref{egM-coll}(i), Theorem~\ref{egM-LS} and Corollary~\ref{egM-finite}, depending on the two limits in \eqref{lim-exist}, the situation can be completely described as follows,
  \[
  \begin{cases}
    \egv{\M(\rd)}(t) \ \sim \ 1 & \text{if, and only if,}\qquad \lim_{t\rightarrow 0^+} \varphi(t)>0, \\ 
\egv{\M(\rd)}(t) \ \sim \ t^{-1/p} \quad & \text{if, and only if,}\qquad \lim_{t\rightarrow 0^+} \varphi(t)=0 \quad\text{and}\quad \lim_{t\rightarrow \infty} t^{-\nd/p}\varphi(t)>0, \\
    \egv{\M(\rd)}(t) \ = \ \infty & \text{if, and only if,}\qquad \lim_{t\rightarrow 0^+} \varphi(t)=0 \quad\text{and}\quad \lim_{t\rightarrow \infty} t^{-\nd/p}\varphi(t)=0.
    \end{cases}
  \]
  So apart from the bounded (first) case, the generalised Morrey spaces $\M(\rd)$ -- in view of their unboundedness, measured in terms of growth envelope functions -- either behave like $L_p(\rd)$ or like, say, $\mathcal{M}_{u,p}(\rd)$, {for  $p<u$,} recall Proposition~\ref{eg-M}, our result from \cite{HaSM3}.
\end{remark}

We can even strengthen this observation as follows.

\begin{corollary}\label{envelope-intermediate}
  Let $0<p<\infty$ and $\varphi\in\Gp$ with $\varphi(1)=1$. Assume that 
  $ \lim_{t\rightarrow 0^+} \varphi(t)=0$ and $\lim_{t\rightarrow \infty} t^{-\nd/p}\varphi(t)>0$. Then
  \begin{equation*}
    \envg(\M(\rd)) = \left(t^{-\frac1p},p\right) = \envg(L_p(\rd)).
        \end{equation*}
\end{corollary}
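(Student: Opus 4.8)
The plan is to prove the equality of growth envelopes by establishing both the envelope function equivalence and the matching of the fine indices. The first component, $\egv{\M(\rd)}(t) \sim t^{-1/p}$ for all $t>0$, is exactly Theorem~\ref{egM-LS}(b) under the present hypotheses (since $\lim_{t\to\infty} t^{-\nd/p}\varphi(t)>0$ forces $\varphi(t)\sim t^{\nd/p}$ for $t>1$). So the only real work is to identify the index $\uGv{\M(\rd)}=p$, which by Definition in Remark~\ref{R-uGindex} amounts to showing that the optimal exponent $v$ in the inequality \eqref{uX}, with $\egX(t)\sim t^{-1/p}$ and hence $(\egX)'(t)/\egX(t) \sim 1/t$, equals $p$.

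For the upper bound $\uGv{\M(\rd)}\le p$, I would exploit the monotonicity principle \eqref{uG-mon}: under the hypothesis $\varphi(t)\sim t^{\nd/p}$ for $t>1$, the embedding \eqref{Mphi-Lr} gives $\M(\rd)\hookrightarrow L_p(\rd)$, and since both spaces have envelope function $\sim t^{-1/p}$ near the origin, \eqref{uG-mon} combined with \eqref{env-Lp} yields $\uGv{\M(\rd)}\le \uGv{L_p(\rd)} = p$. For the lower bound $\uGv{\M(\rd)}\ge p$, I would go back to the extremal simple functions produced by Lemma~\ref{clever-lemma-LS}. Given the family $f_s = s^{-1/p}\charfn{A_s}$ from \eqref{f_sr}, one builds, for a fixed small $\varepsilon>0$, a single test function $f\in\M(\rd)$ whose rearrangement $f^\ast$ behaves like $t^{-1/p}$ on $(0,\varepsilon)$ (up to a slowly varying or logarithmic correction): concretely, take a superposition $f = \sum_k c_k g_{s_k}$ of the $g_{s_k}$ from the lemma, with disjointly supported pieces (which is arranged in the lemma's construction by translating far apart), choosing $s_k\to 0$ and coefficients $c_k$ so that $\|f\mid\M(\rd)\|\lesssim 1$ while the left-hand side of \eqref{uX} diverges for every $v<p$. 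This is the standard device used to compute $\uGindex$ for $L_p$ itself, and the point is that the Morrey quasi-norm of such a disjoint superposition is controlled by the $\ell_\infty$-norm of the individual pieces, exactly because distant translates do not interact (this is the mechanism already visible in Lemma~\ref{clever-lemma-LS} and in Step~1 of the proof of Theorem~\ref{egM-LS}).

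The main obstacle is the lower bound, more precisely verifying that a disjoint superposition of the extremal functions $g_{s_k}$ stays bounded in $\M(\rd)$ while forcing divergence in \eqref{uX} for all $v<p$. One must check that when the building blocks are placed far enough apart (distance growing suitably), any cube $Q$ meets at most a controlled number of them depending on $\ell(Q)$, and that the resulting Morrey average does not blow up — this is precisely the bookkeeping carried out with the sequences $(n_k)$ and $(m_k)$ in Step~1 of Theorem~\ref{egM-LS}, and the same idea adapts here, now aiming at a rearrangement comparable to $t^{-1/p}$ rather than merely unbounded. Once $\|f\mid\M(\rd)\|\lesssim 1$ is secured, the divergence of the integral in \eqref{uX} for $v<p$ is a routine computation identical to the $L_p$ case, since $f^\ast(t)/\egX(t)$ is essentially constant on the relevant dyadic blocks and the logarithmically weighted sum $\sum_k 1$ over infinitely many blocks diverges. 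Combining the two bounds gives $\uGv{\M(\rd)} = p$, and together with the envelope function this yields $\envg(\M(\rd)) = (t^{-1/p},p) = \envg(L_p(\rd))$.

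Alternatively, and perhaps more cleanly, one can sandwich $\M(\rd)$ between two Lebesgue-type spaces with the same growth envelope: from \eqref{Linf-emb-sharp} one has no help, but from $L_p(\rd)\hookrightarrow \M(\rd)$ — which holds here because $\varphi(t)\le t^{\nd/p}$ for $t\ge 1$ and $\varphi(t)\le 1$ always, so by \eqref{emb-Mphi} with $\varphi_2=\varphi$, $\varphi_1(t)=t^{\nd/p}$, noting $t^{\nd/p}\gtrsim\varphi(t)$ — one obtains $\uGv{L_p(\rd)}\le\uGv{\M(\rd)}$ directly from \eqref{uG-mon}, giving $p\le\uGv{\M(\rd)}$ without any explicit construction. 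Together with the upper bound from $\M(\rd)\hookrightarrow L_p(\rd)$ this immediately closes the argument, so the harder construction above can be avoided entirely and the proof reduces to invoking the two-sided embedding $L_p(\rd)\hookrightarrow\M(\rd)\hookrightarrow L_p(\rd)$ at the level of growth envelopes.
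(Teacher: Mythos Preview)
Your upper bound for the fine index via $\M(\rd)\hookrightarrow L_p(\rd)$ and \eqref{uG-mon} is correct and matches the paper's argument exactly.

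The flaw is in your alternative route for the lower bound. The embedding $L_p(\rd)\hookrightarrow\M(\rd)$ does \emph{not} hold under the stated hypotheses in general. By \eqref{emb-Mphi} it would require $t^{\nd/p}\ge C\varphi(t)$ for all $t>0$, but \eqref{Gp-tsmall} gives $\varphi(t)\ge t^{\nd/p}$ for $0<t\le 1$, with strict dominance precisely when $\lim_{t\to 0^+}\varphi(t)t^{-\nd/p}=\infty$ --- the interesting case here, e.g.\ $\varphi=\varphi_{u,p}$ with $u>p$ from Example~\ref{exm-Gp}(i). Your justification ``$\varphi(t)\le 1$ always'' does not yield $t^{\nd/p}\gtrsim\varphi(t)$ for small $t$, since $t^{\nd/p}$ is itself small there and can be much smaller than $\varphi(t)$. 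So the two-sided sandwich $L_p\hookrightarrow\M\hookrightarrow L_p$ is unavailable, and the lower bound cannot be obtained for free.

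Your first approach is closer to what is needed, and the paper does use Lemma~\ref{clever-lemma-LS}, but not by building a single infinite superposition whose Morrey norm must then be controlled via the $(n_k),(m_k)$ bookkeeping. Instead, for an arbitrary nonnegative sequence $b\in\ell_p$ and each finite $m$, one constructs $f_{b,m}\in L_p(\rd)$ with $\|f_{b,m}\mid L_p\|=1$ and $f_{b,m}^\ast(c\,2^{-j\nd})\gtrsim \|b\mid\ell_p^m\|^{-1}\,b_j\,2^{j\nd/p}$ for $j\le m$; Lemma~\ref{clever-lemma-LS} then transfers this to some $g_{b,m}\in\M(\rd)$ with the same rearrangement and $\|g_{b,m}\mid\M\|\le 1$. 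Substituting $g_{b,m}$ into \eqref{uX} gives $\|b\mid\ell_v^m\|\lesssim\|b\mid\ell_p\|$ uniformly in $m$, hence $\ell_p\hookrightarrow\ell_v$, forcing $v\ge p$. This sidesteps the issue of controlling an infinite disjoint superposition in the Morrey norm, which is more delicate than you suggest: the claim that the Morrey norm of such a superposition is governed by the $\ell_\infty$-norm of the pieces is not automatic, since large cubes see many pieces at once.
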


\begin{proof}
  First we show that $\uGv{\M(\rd)} \leq p = \uGv{L_p(\rd)}$ and want to apply Remark~\ref{R-uGindex}, in particular, \eqref{uG-mon} for this. Plainly, by \eqref{Lp-global} and \eqref{0c},
  \[\egv{\M(\rd)}(t) \ \sim \ t^{-1/p} \ \sim\ \egv{L_p(\rd)}(t),\]
  while \eqref{Mphi-Lr} and our assumptions imply $\M(\rd) \hookrightarrow L_p(\rd)$. Thus \eqref{env-Lp} concludes the argument for $\uGv{\M(\rd)} \leq p$.

  Now we deal with the converse and have to prove that the existence of some $c>0$ such that 
  \begin{equation}\label{ugx-M}
\left(\int\limits_0^{\varepsilon} \left( t^{\frac1p} f^\ast(t)\right)^v \ \frac{\dint t}{t} \right)^{1/v}\leq c\,\|f\vert \M(\rd)\| 
\end{equation}
for all $f\in\M(\rd)$ implies $v\geq p$. We use an argument similar to \cite[Prop.~4.10]{Ha-crc} and Lemma~\ref{clever-lemma-LS} again. Let $v$ be such that \eqref{ugx-M} is satisfied for some $c>0$ and all $f\in\M(\rd)$.

Let $\ b=\{b_j\}_{j\in\nat}\ $ be a sequence of nonnegative
numbers, with $b\in\ell_p$ and $\| b|\ell_p\|>0$. For convenience we may assume $\ b_1 = \cdots = b_{J-1} = 0$, where $\ J\ $ is suitably chosen such that $\ 2^{-J} \sim \varepsilon$ given by \eqref{ugx-M}.
Let $\ x^0 \in\rd$, $\ \left|x^0\right|>4$, such that
$B(2^{-j}x^0, 2^{-j}) \ \cap \ B(2^{-k}x^0, 2^{-k})$
%$\ \supp \psi\left(2^j\cdot - x^0\right) \ \cap \ \supp \psi\left(2^k\cdot -x^0\right)$
$= \emptyset\ $ for $ j\neq k$, $j,k\in\nat_0$. %We denote $A_j = $
Let $m>J$. Then
 \[
  f_{b,m}(x) = \| b|\ell_p^m\|^{-1} \sum_{j=1}^m 2^{j\frac{\nd}{p}} \ b_j\ \charfn{B(2^{-j}x^0, 2^{-j})}(x), \quad x\in\rd, 
  \]
  belongs to $\ L_p(\rd)$, and due to the disjointness of the supports, $\| f_{b,m}| L_p(\rd)\|=1$. 

  On the other hand, $\ f_{b,m}^\ast\left(c\ 2^{-j\nd}\right) \ \geq \ c' \| b|\ell_p^m\|^{-1} \ b_j\
2^{j\frac{\nd}{p}}$, $\ j\leq m$.  Now Lemma~\ref{clever-lemma-LS} implies that for any such $b\in\ell_p$ and $m\in\nat$, there exists a function $f_{b,m}\in L_p(\rd)$ and hence a $g_{b,m}\in\M(\rd)$ with $\|g_{b,m}|\M(\rd)\|\leq 1$ and $g_{b,m}^\ast\left(c\ 2^{-j\nd}\right)  \sim f_{b,m}^\ast\left(c\ 2^{-j\nd}\right)  \geq \ c' \| b|\ell_p^m\|^{-1}  b_j\
2^{j\frac{\nd}{p}}$. 
Then by monotonicity arguments,
\begin{eqnarray*}
\left(\int\limits_0^\varepsilon \left( t^{\frac1p} \
    g_{b,m}^\ast(t)\right)^v \frac{\dint t}{t} \right)^{1/v} &\sim& 
    \left(\sum_{j=J}^m \left[ 2^{-j\frac{\nd}{p}} \
    g_{b,m}^\ast\left(c\ 2^{-j\nd}\right)\right]^v  \right)^{1/v}\\
&\geq & c'\ \| b|\ell_p^m\|^{-1} \left(\sum_{j=J}^m \left[2^{-j\frac{\nd}{p}} \  b_j\
   2^{j\frac{\nd}{p}} \right]^v  \right)^{1/v} \\
&\sim & \| b|\ell_p^m\|^{-1} \left(\sum_{j=J}^m b_j^v\right)^{1/v}, 
 \end{eqnarray*}
hence \eqref{ugx-M} applied to $g_{b,m}$ implies $\| b| \ell_v^m\| \leq c \|b|\ell_p^m\| \leq c \|b|\ell_p\|$ for arbitrary such sequences $b=\{b_j\}_{j\in\nat}$ of nonnegative numbers and arbitrary $m>J$. Letting $m\to\infty$, we thus arrive at $\| b| \ell_v\| \leq c \|b|\ell_p\|$, which  obviously requires $\ v\geq p$.
\end{proof}

  Recall the complete distinction of cases mentioned in Remark~\ref{only-choices} which only leaves the second as the most interesting case, that is, when
  \begin{equation}\label{phi-middle}
    0<p<\infty, \quad  \varphi\in\Gp, \quad \text{with}\quad  \varphi(1)=1, \quad \lim_{t\rightarrow 0^+} \varphi(t)=0, \quad \lim_{t\rightarrow \infty} t^{-\nd/p}\varphi(t)>0.
  \end{equation}
  Furthermore, this observation is strengthened by Corollary~\ref{envelope-intermediate} which characterises any such space $\M(\rd)$ as very similar to $L_p(\rd)$ -- in view of its unboundedness properties. So the question remains which functions $\varphi$ satisfy \eqref{phi-middle} apart from $\varphi(t) \sim t^{\nd/p}$ which leads to $\M(\rd)=L_p(\rd)$ and was already known. Clearly, the last assumption in \eqref{phi-middle} together with \eqref{Gp-tlarge} implies $\varphi(t) \sim t^{d/p}$ for $t\geq 1$, so the only possible modification concerns the local setting, i.e., $\varphi(t)$ for $0<t<1$.

\begin{example}
  We return to Example~\ref{exm-Gp}(i) and find that \eqref{example1} with $u>p$ is an appropriate example different from the $L_p$-setting, that is,
\[ \varphi_{u,p}(t)=
\begin{cases}
t^{\nd/u} & \text{if}\qquad t\le 1,\\ 
t^{\nd/p} & \text{if}\qquad t >1, 
\end{cases}
\]
with $0<p< u<\infty$. Theorem~\ref{egM-LS}(b) and Corollary~\ref{envelope-intermediate} lead in this case to
\[
\egv{\M(\rd)}(t) \sim t^{-\frac1p}, \quad t>0, \qquad\text{and}\quad 
  \envg(\M(\rd)) = \left(t^{-\frac1p},p\right)\quad \text{with}\quad \varphi=\varphi_{u,p}.
\]
To interpret this finding  we can thus conclude that we have some `mixture': {\em locally} we have the `classical' (proper) Morrey setting $\mathcal{M}_{u,p}(\rd)$, $u>p$, (which would correspond to $\varphi(t)\sim t^{\nd/u}$, $t>0$), while {\em globally} the situation resembles $L_p(\rd)$ (corresponding to $\varphi(t)\sim t^{\nd/p}$, $t>0$). Another possibility would be a local behaviour like in Example~\ref{exm-Gp}(iv) with $u=p$.
\end{example}

\section{Generalised  Besov-Morrey spaces}\label{gen-Nphi}

Let $\mathcal{S}(\rd)$ be the set of all Schwartz functions on $\rd$, endowed
with the usual topology,
and denote by $\mathcal{S}'(\rd)$ its topological dual, namely,
the space of all bounded linear functionals on $\mathcal{S}(\rd)$
endowed with the weak $\ast$-topology.
For all $f\in \mathcal{S}(\rd)$ or $f\in\mathcal{S}'(\rd)$, we
use $\mathcal{F} f $ to denote its Fourier transform, and $\mathcal{F}^{-1}f$ for its inverse.
Now let us define the generalised Besov-Morrey spaces introduced in \cite{NNS16}.

\begin{definition} \label{def-spaces} 
Let $0<p<\infty$, $0<q\leq \infty$, $s\in \rr$, and $\varphi\in \Gp$. 
Let $\eta_0,\eta\in \mathcal{S}(\rd)$ be nonnegative compactly supported functions satisfying
\begin{equation*}
\eta_0(x)>0 \quad \text{if}\quad x \in Q(2),
\end{equation*}
\begin{equation*}
0\notin \supp \eta\quad \text{and} \quad \eta(x)>0 \quad \text{if}\quad x \in Q(2) \setminus Q(1).
\end{equation*}
For $j\in\nat$, let $\eta_j(x):=\eta(2^{-j}x)$,  $x\in\rd$. 
%\begin{itemize}
%\item[{\upshape\bfseries (i)}]
The  generalised Besov-Morrey   space
  $\MB(\rd)$ is defined to be the set of all  $f\in\mathcal{S}'(\rd)$ such that
\begin{align*}%\label{BM}
\big\|f\mid \MB(\rd)\big\|:=
\bigg(\sum_{j=0}^{\infty}2^{jsq}\big\| \mathcal{F}^{-1}(\eta_j  \mathcal{F} f)\mid
\M(\rd)\big\|^q \bigg)^{1/q} < \infty,
\end{align*}
with the usual modification made in case of $q=\infty$.
%\item[{\upshape\bfseries  (ii)}] When $q<\infty$, assume that there exist $C,\varepsilon>0$ such that  
%\begin{equation}\label{cond-E}
%\frac{t^{\varepsilon}}{\varphi(t)}\leq C \frac{r^{\varepsilon}}{\varphi(r)} \quad \text{if} \quad t\geq r.
%\end{equation}
%The  generalised Triebel-Lizorkin-Morrey  space $\MF(\rd)$
%is defined to be the set of all $f\in\mathcal{S}'(\rd)$ such that
%\begin{align*}%\label{FM}
%\big\|f \mid \MF(\rd)\big\|:=\bigg\|\bigg(\sum_{j=0}^{\infty}2^{jsq} |
%\mathcal{F}^{-1}(\eta_j  \mathcal{F} f )(\cdot)|^q\bigg)^{1/q}
%\mid \M(\rd)\bigg\| <\infty
%\end{align*}
%with the usual modification made in case of  $q=\infty$.
%\end{itemize}
\end{definition}

%%%%%%%%%%%%%%%%%%%%%%%%%%%%%%%%%%%

\begin{remark}\label{rem-coinc}
  The above spaces have been introduced in \cite{NNS16}, also for applications we refer to the recent monograph \cite{FHS-MS-2}. 
  There the authors have proved that those spaces are independent of the choice of the functions $\eta_0$ and $\eta$ considered in the definition, as different choices lead to equivalent quasi-norms, cf. \cite[Thm.~1.4]{NNS16}.
 Let us mention that there exists a parallel approach to generalised Triebel-Lizorkin-Morrey spaces $\MF(\rd)$, but we shall not study these spaces here.
  When $\varphi(t):=t^{\frac{\nd}{u}}$ for $t>0$ and $0<p\leq u<\infty$, then 
\begin{align}\label{Nphi=Nu}
\MB(\rd)={\mathcal N}^s_{u,p,q}(\rd) 
\end{align}
are the usual Besov-Morrey spaces, which are studied in \cite{YSY10} and in the survey papers by Sickel \cite{s011,s011a}, see also the recent monograph \cite{FHS-MS-1}.  
Of course, we can recover the  classical Besov spaces $B^s_{p,q}(\rd)$ for any $0<p<\infty$, $0<q\leq \infty$, and $s\in \rr$, since 
\begin{align}\label{N=B}
B^s_{p,q}(\rd)={\mathcal N}^s_{p,p,q}(\rd).
\end{align}
Moreover, thanks to \eqref{emb-Linf-sharp} and \eqref{Linf-emb-sharp},   if $\varphi_0(t)\sim 1$, then for any $0<p<\infty$, $0<q\leq \infty$, and $s\in \rr$, it holds
\begin{equation}\label{B=Nphi}
B^s_{\infty,q}(\rd)=\mathcal{N}^s_{\varphi_0,p,q}(\rd).
\end{equation}
The elementary embeddings
\begin{equation} \label{elembb1}
\mathcal  N^{s+\varepsilon}_{\varphi,p,q_1} (\rd)\hookrightarrow
\mathcal  N^{s}_{\varphi,p,q_2}(\rd), \quad \varepsilon>0,
\end{equation}
and 
\begin{equation} \label{elembb2}
\mathcal  N^{s}_{\varphi,p,q_1} (\rd) \hookrightarrow \mathcal  N^{s}_{\varphi,p,q_2}(\rd) , \quad q_1\leq q_2,
\end{equation}
are natural extensions of their classical counterparts, cf. 
\cite[Prop.~3.3]{NNS16}.
\end{remark}

The following embedding assertion has been proved in \cite{hms22}.

\begin{theorem}  \label{main}
Let $s_i\in\rr$, $0<p_i<\infty$, $0<q_i\leq \infty$, and $\varphi_i\in {\mathcal G}_{p_i}$, for $i=1,2$. 
We assume without loss of generality that $\varphi_1(1)=\varphi_2(1)=1$. 
Let $ \varrho=\min(1,\frac{p_1}{p_2})$ and $\alpha_j= \sup_{\nu\le j}\frac{\varphi_2(2^{-\nu})}{\varphi_1(2^{-\nu})^\varrho}$, $j\in \mathbb{N}_0$. 

There is a continuous embedding 
\begin{equation} \label{embed1}
{\mathcal N}^{s_1}_{\varphi_1,p_1,q_1}(\rd) \hookrightarrow {\mathcal N}^{s_2}_{\varphi_2,p_2,q_2}(\rd)
\end{equation}
 if, and only if, 
\begin{align}\label{cond0}
\sup_{\nu\le 0}\frac{\varphi_2(2^{-\nu})}{\varphi_1(2^{-\nu})^\varrho} & < \infty , 
\intertext{and} 
\label{cond2}
\left\{   2^{j(s_2-s_1)} \alpha_j \frac{\varphi_1(2^{-j})^\varrho}{\varphi_1(2^{-j})}\right\}_{j\in \nat} & \in \ell_{q^*}  \qquad \text{where}  \quad \frac{1}{q^*}=\left(\frac{1}{q_2}-\frac{1}{q_1}\right)_+  .
\end{align}
% The embedding  \eqref{embed1} is never compact.
\end{theorem}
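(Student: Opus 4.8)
The plan is to reduce the embedding question for the function spaces $\MBa(\rd)\hookrightarrow\MBb(\rd)$ to a corresponding embedding of sequence spaces, and then to characterise the latter explicitly. First I would recall the wavelet (or atomic) decomposition of the generalised Besov-Morrey spaces $\MB(\rd)$ from \cite{hms22}: there is an isomorphism between $\MB(\rd)$ and an associated sequence space $\n$, built from the wavelet coefficients $\lambda_{j,m}$ indexed by dyadic cubes $Q_{j,m}$, with quasi-norm involving $\varphi$-weighted Morrey-type averages of the coefficients over dyadic sub-cubes at each scale $j$. Under this isomorphism, \eqref{embed1} is equivalent to the continuous embedding $\na\hookrightarrow\nb$ of the corresponding sequence spaces, so the whole problem becomes a question about weighted sequence spaces, which is much more transparent.

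Second, I would analyse the sequence-space embedding $\na\hookrightarrow\nb$ in two stages: the behaviour at a fixed scale $j$ (the ``inner'' Morrey quasi-norm on $\zd$-indexed coefficients), and the behaviour across scales (the ``outer'' $\ell_q$-sum with the smoothness weight $2^{js}$). For the inner part, the key input is an embedding result for sequence spaces modelled on generalised Morrey spaces, namely the discrete analogue of \eqref{emb-Mphi}: passing from $(\varphi_1,p_1)$ to $(\varphi_2,p_2)$ at a fixed scale costs a factor controlled by $\sup_{\nu\le j}\varphi_2(2^{-\nu})\varphi_1(2^{-\nu})^{-\varrho}$, where $\varrho=\min(1,p_1/p_2)$ accounts for the loss when $p_2>p_1$ (the embedding $\ell_{p_1}\hookrightarrow\ell_{p_2}$ being available only in that direction, with the $\varrho$-power reflecting how the Morrey normalisation interacts with the change of integrability). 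This is precisely where the quantity $\alpha_j$ enters. The necessity of condition \eqref{cond0} then comes from testing with coefficient sequences concentrated at a single large cube (scale $\nu\le 0$): if $\sup_{\nu\le 0}\varphi_2(2^{-\nu})\varphi_1(2^{-\nu})^{-\varrho}=\infty$ the inner embedding already fails, so no global embedding can hold; sufficiency of \eqref{cond0} guarantees the factors $\alpha_j$ are finite and hence that each scale is handled.

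Third, for the cross-scale part I would, assuming \eqref{cond0}, estimate $\|\lambda\,|\,\nb\|$ by $\big(\sum_j 2^{js_2 q_2}\alpha_j^{q_2}\varphi_1(2^{-j})^{(\varrho-1)q_2}\,\|\lambda^{(j)}\,|\,(\text{scale-}j\text{ space for }\varphi_1,p_1)\|^{q_2}\big)^{1/q_2}$, write $2^{js_2}\alpha_j\varphi_1(2^{-j})^{\varrho-1} = 2^{j(s_2-s_1)}\alpha_j\varphi_1(2^{-j})^{\varrho}\varphi_1(2^{-j})^{-1}\cdot 2^{js_1}$, and then apply Hölder's inequality in the scale variable with the exponent split $\frac{1}{q^*}=(\frac1{q_2}-\frac1{q_1})_+$ to separate the weight $\{2^{j(s_2-s_1)}\alpha_j\varphi_1(2^{-j})^{\varrho}/\varphi_1(2^{-j})\}_j\in\ell_{q^*}$ from the remaining $\ell_{q_1}$-sum which is exactly $\|\lambda\,|\,\na\|$. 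For necessity of \eqref{cond2}, I would test the embedding on sequences supported on a single dyadic cube per scale, with a free scalar sequence across scales chosen to be extremal for the $\ell_{q^*}$-duality; plugging such test elements into \eqref{embed1} forces $\{2^{j(s_2-s_1)}\alpha_j\varphi_1(2^{-j})^{\varrho}/\varphi_1(2^{-j})\}_j\in\ell_{q^*}$ by the converse direction of Hölder. The main obstacle I anticipate is getting the \emph{inner}, fixed-scale Morrey sequence-space embedding sharp with the correct $\varrho$-power and the correct appearance of the running supremum $\alpha_j$ (rather than just $\varphi_2(2^{-j})/\varphi_1(2^{-j})^{\varrho}$): the monotonicity built into $\Gp$ via \eqref{Gp-def} is what allows one to replace the pointwise ratio by its supremum over coarser scales, and matching the necessity examples to this requires carefully choosing the geometric configuration of the supporting cubes, much as in the construction underlying Lemma~\ref{clever-lemma-LS}. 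Once the single-scale estimate is pinned down with the right constants, the cross-scale Hölder argument and the two families of test functions complete both implications.
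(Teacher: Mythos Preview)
The paper does not prove Theorem~\ref{main}: it is quoted verbatim from the authors' earlier work \cite{hms22} (see the sentence immediately preceding the theorem: ``The following embedding assertion has been proved in \cite{hms22}''), so there is no proof here to compare your proposal against. That said, your plan matches the methodology signalled by the title of \cite{hms22} --- reduction to the sequence spaces $\n$ via the wavelet isomorphism, a sharp single-scale Morrey-type embedding producing the factor $\alpha_j$ with exponent $\varrho=\min(1,p_1/p_2)$, and a cross-scale H\"older argument with exponent $q^*$ --- and is the standard architecture for results of this type; the point you flag as the main obstacle (getting the inner embedding sharp with the running supremum $\alpha_j$ rather than the pointwise ratio, and building matching extremal configurations for necessity) is indeed where the substance lies.
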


%%%%%%%%%%%%%%%%%%%%%%%%%%%%%%

Let $0<p<\infty$, $0<q\leq \infty$, $s\in \rr$, and $\varphi\in \Gp$. Then 
$$
\n(\rd)= \{\lambda=\{\lambda_{j,m}\}_{j,m}:  \| \lambda\mid \n\|<\infty \},
$$
where
$$
\| \lambda\mid \n\| =\Bigg(\sum_{j=0}^\infty 2^{jsq} 
\mathop{\sup_{\nu: \nu \leq j}}_{k\in \zd}\!  \varphi(2^{-\nu})^q \,2^{(\nu-j)\frac{\nd}{p} q}\Big(\!\! \mathop{\sum_{m\in\zd:}}_{Q_{j,m}\subset Q_{\nu,k}}\!\!|\lambda_{j,m}|^p\Big)^{\frac q p}\Bigg)^{1/q}
$$
with the usual modification if $q=\infty$; for details we refer to \cite{hms22}.\\

As usual,  regarding the study of growth envelopes of the spaces $\MB(\rd)$ of interest are the spaces which satisfy
$$\MB(\rd) \subset \Lloc(\rd) \qquad  \text{and} \qquad \MB(\rd) \not\hookrightarrow  L_{\infty}(\rd).$$
So we first clarify this situation.

Regarding embeddings into $ L_\infty(\rd)$, we obtain the following criterion. 
%below follows from the previous theorem, cf.  \cite[Cor.~5.10]{hms22}

\begin{theorem}\label{emb-Linfty}
	Let $s\in\rr$, $0<p<\infty$, $0<q\leq \infty$,  and $\varphi\in \Gp$. Let $q'$ be given by $\frac{1}{q'}= (1-\frac{1}{q})_+$, as usual. Then
	\begin{equation}\label{embed-Linfty}
	\MB(\rd) \hookrightarrow L_\infty(\rd).
	\end{equation}
	if, and only if,  
	\begin{equation}\label{suff-Linf}
	\left\{ 2^{-js} \varphi(2^{-j})^{-1} \right\}_{j\in\no} \in \ell_{q'} .
	\end{equation}
	\end{theorem}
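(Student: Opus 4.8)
The strategy is to reduce the problem to the already understood classical cases and to the sequence-space description. For the sufficiency direction, I would exploit the monotone structure of $\varphi\in\Gp$ together with the embedding theorem, Theorem~\ref{main}. Observe that $B^s_{\infty,q}(\rd)=\mathcal N^s_{\varphi_0,p,q}(\rd)$ for $\varphi_0\equiv 1$ by \eqref{B=Nphi}, and that $B^s_{\infty,q}(\rd)\hookrightarrow L_\infty(\rd)$ precisely when $s>0$, or $s=0$ and $q\le 1$, i.e. when $\{2^{-js}\}_{j\in\no}\in\ell_{q'}$. So one reasonable route is: if \eqref{suff-Linf} holds, find an appropriate $\widetilde\varphi\in\mathcal G_{\widetilde p}$ and shift of smoothness so that $\MB(\rd)\hookrightarrow B^{\widetilde s}_{\infty,\widetilde q}(\rd)\hookrightarrow L_\infty(\rd)$; concretely one applies Theorem~\ref{main} with $\varphi_1=\varphi$, $p_1=p$, and $\varphi_2\equiv 1$, $p_2$ arbitrary (say $p_2=p$, so $\varrho=1$), $s_2=0$, $s_1=s$, $q_2=q$, $q_1=q$. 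Then $\alpha_j=\sup_{\nu\le j}\varphi(2^{-\nu})^{-1}=\varphi(2^{-j})^{-1}$ since $\varphi$ is nondecreasing and hence $\varphi(2^{-\nu})$ increases as $\nu$ decreases — wait, one must be careful with signs: for $\nu\le j$ we have $2^{-\nu}\ge 2^{-j}$, so $\varphi(2^{-\nu})\ge\varphi(2^{-j})$, thus $\varphi(2^{-\nu})^{-1}\le\varphi(2^{-j})^{-1}$, giving $\alpha_j=\varphi(2^{-j})^{-1}$. Condition \eqref{cond0} then reads $\sup_{\nu\le 0}\varphi(2^{-\nu})^{-1}<\infty$, which holds because $\varphi$ is nondecreasing and positive (so $\varphi(2^{-\nu})\ge\varphi(\text{something positive})$, but actually for $\nu\le 0$, $2^{-\nu}\ge 1$ and $\varphi(2^{-\nu})\ge\varphi(1)=1$). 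Condition \eqref{cond2} becomes $\{2^{-js}\varphi(2^{-j})^{-1}\}_{j\in\nat}\in\ell_{q'}$, which is exactly \eqref{suff-Linf}. Hence $\MB(\rd)\hookrightarrow\mathcal N^0_{\varphi_0,p,q}(\rd)=B^0_{\infty,q}(\rd)$. But then I still need $B^0_{\infty,q}(\rd)\hookrightarrow L_\infty$, which fails for $q>1$; so this crude approach over-restricts. Instead, I should aim directly for $L_\infty$ rather than routing through a $B^0_{\infty,q}$ space.

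\textbf{Refined sufficiency approach.} The cleaner path is to use the sequence-space characterisation $\n(\rd)$ together with the wavelet/atomic decomposition from \cite{hms22}, and to estimate $\|f\mid L_\infty\|$ directly from the building blocks. For an atomic (or wavelet) decomposition $f=\sum_{j,m}\lambda_{j,m}a_{j,m}$ with atoms $a_{j,m}$ supported near $Q_{j,m}$ and normalised so that $\|a_{j,m}\mid L_\infty\|\lesssim 1$, one has $|f(x)|\le\sum_j\sum_m|\lambda_{j,m}||a_{j,m}(x)|$, and for each fixed $j$ and $x$ only boundedly many $m$ contribute, so $|f(x)|\lesssim\sum_j\sup_m|\lambda_{j,m}|$. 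Now from the definition of $\|\lambda\mid\n\|$, taking in the inner supremum the choice $\nu=j$ (so $2^{(\nu-j)\nd/p}=1$) and $Q_{\nu,k}=Q_{j,m}$ itself (a single cube), one gets $\varphi(2^{-j})|\lambda_{j,m}|\le\big(\sum_j 2^{jsq}\varphi(2^{-j})^q\sup_m|\lambda_{j,m}|^q\big)^{1/q}\le\|\lambda\mid\n\|$ for each fixed $j$, hence $\sup_m|\lambda_{j,m}|\le 2^{-js}\varphi(2^{-j})^{-1}c_j$ with $\{c_j\}\in\ell_q$. Then by Hölder with exponents $q$ and $q'$, $\sum_j\sup_m|\lambda_{j,m}|\le\|\{2^{-js}\varphi(2^{-j})^{-1}\}_j\mid\ell_{q'}\|\cdot\|\{c_j\}_j\mid\ell_q\|\lesssim\|f\mid\MB(\rd)\|$ under \eqref{suff-Linf}. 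One must double-check normalisation of atoms (the smoothness/moment conditions of the atomic theorem in \cite{hms22}) and that an atomic representation with near-optimal sequence norm exists; this is standard.

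\textbf{Necessity.} For the converse, suppose \eqref{suff-Linf} fails, i.e. $\{2^{-js}\varphi(2^{-j})^{-1}\}_{j\in\no}\notin\ell_{q'}$; I want to construct $f\in\MB(\rd)$ that is unbounded. The natural candidate is a lacunary-type series built from a single bump: fix a smooth $\psi$ with compactly supported Fourier transform adapted to the $j$-th frequency annulus, put $f=\sum_{j\in F}\mu_j\,\psi(2^j\cdot)$ (or its wavelet analogue $\sum_{j\in F}\mu_j a_{j,0}$, all centred at the origin, $F$ a suitable index set), choose the coefficients $\mu_j$ by a duality/gliding-hump argument so that $\{2^{js}\varphi(2^{-j})\mu_j\}\in\ell_q$ (making $\|f\mid\MB(\rd)\|<\infty$ — here one uses that with all atoms at a single point the sequence norm $\|\lambda\mid\n\|$ reduces, via the monotonicity $\varphi(2^{-\nu})2^{(\nu-j)\nd/p}\le\varphi(2^{-j})$ for $\nu\le j$ from \eqref{Gp-def}, to $\big(\sum_j 2^{jsq}\varphi(2^{-j})^q|\mu_j|^q\big)^{1/q}$) while $\sum_{j\in F}\mu_j|a_{j,0}(0)|=\sum_{j\in F}\mu_j 2^{?}=\infty$, forcing $|f(0)|=\infty$ (after a harmless argument about convergence in $\cs'$ and pointwise values). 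The existence of such $\{\mu_j\}$ is exactly the statement that the natural pairing $\sum_j 2^{-js}\varphi(2^{-j})^{-1}\nu_j$ is unbounded on the unit ball of $\ell_q$-weighted sequences, i.e. $\{2^{-js}\varphi(2^{-j})^{-1}\}\notin\ell_{q'}$; this is a standard sequence-space duality. One should also recall the known classical case $\varphi(t)=t^{\nd/u}$ to sanity-check: there \eqref{suff-Linf} becomes $\{2^{-j(s-\nd/u)}\}\in\ell_{q'}$, matching the known criterion for $\mathcal N^s_{u,p,q}\hookrightarrow L_\infty$, and the case $u=p$ recovers the classical Besov result for $B^s_{p,q}$.

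\textbf{Main obstacle.} The technically delicate point is the necessity direction: making the lacunary sum genuinely pointwise infinite at a point while keeping the $\MB$-quasi-norm finite requires care because for $s=0$ and $q>1$ one is exactly at the borderline, and one must handle the case $s<0$ (or $s=0$, large $q$) where individual building blocks are not bounded — there the construction must be done on the level of the full series with the rearrangement/gliding-hump structure, and one needs that the series converges in $\cs'$ to a regular distribution whose essential supremum is $+\infty$. Equivalently, the clean way is to verify $\MB(\rd)\not\hookrightarrow L_\infty$ by showing the growth envelope function is unbounded via Proposition~\ref{properties:EG}(ii); but since the theorem is an iff about embedding, a direct construction (or citing the analogous construction for $B^s_{p,q}$ and transferring via the sequence space $\n$) is needed. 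I expect the sufficiency part to be routine given Theorem~\ref{main} and the atomic decomposition, and the necessity to require the bulk of the work.
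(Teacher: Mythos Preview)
Your proposal is correct and follows essentially the same route as the paper. For sufficiency the paper simply cites \cite[Cor.~5.10]{hms22}, whose content is precisely the atomic/wavelet argument you sketch: bound $|f(x)|\lesssim\sum_j\sup_m|\lambda_{j,m}|$, extract $\sup_m|\lambda_{j,m}|\le 2^{-js}\varphi(2^{-j})^{-1}c_j$ with $\{c_j\}\in\ell_q$ from the sequence norm, and apply H\"older. For necessity the paper uses exactly your test family --- wavelets $\psi^G_{j,0}$ concentrated at a single point with coefficients $\gamma_j(\nu)=\nu_j 2^{-js}\varphi(2^{-j})^{-1}$ --- and the same $\ell_q$--$\ell_{q'}$ duality, but in the \emph{forward} direction: assuming the embedding, they get $\big|\sum_j\gamma_j(\nu)\big|\le c\|f_\nu\mid L_\infty\|\le C$ uniformly over $\|\nu\mid\ell_q\|\le 1$, and read off \eqref{suff-Linf}. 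This neatly sidesteps the convergence worry you flagged in your ``main obstacle'', since each $f_\nu$ is a genuine element of $\MB(\rd)$ and no single unbounded limit function is needed. The paper also peels off the case $0<q\le 1$ (where $q'=\infty$) and handles it separately by routing through $\MB(\rd)\hookrightarrow L_\infty(\rd)\hookrightarrow B^0_{\infty,\infty}(\rd)=\mathcal{N}^0_{\varphi_0,p,\infty}(\rd)$ and invoking Theorem~\ref{main}; your duality formulation covers this case too once one notes that $\{a_j\}\notin\ell_\infty$ allows a gliding-hump choice of $\{b_j\}\in\ell_q$ with $\sum a_jb_j=\infty$, but the paper's split is arguably tidier.
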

%%%%%%%%%%%%%%%
\begin{proof}
	The sufficiency follows from our previous result, cf.  \cite[Cor.~5.10]{hms22}. We are left to prove the necessity. 
	First we consider the case $0<q\leq 1$, that is, $q'=\infty$. Recall that by Remark~\ref{emb-Linf}, $\mathcal{M}_{\psi,p}(\rd)= L_\infty(\rd)$ when 
        $\psi(t)\equiv 1 $,  % then $\psi\in \Gp$, $\mathcal{M}_{\psi,p}(\rr^d)= L_\infty(\rr^d)$ and
        thus, by Definition~\ref{def-spaces}(i),  $\mathcal{N}^0_{\psi,p,\infty}(\rd)= B^0_{\infty,\infty}(\rd)$, see also \eqref{B=Nphi}. Since we assume \eqref{embed-Linfty}, then $\MB(\rd) \hookrightarrow L_\infty(\rd) \hookrightarrow B^0_{\infty,\infty}(\rd) =\mathcal{N}^0_{\psi,p,\infty}(\rd)$. Now an application of Theorem~\ref{main} implies that the condition $\left\{ 2^{-js} \varphi(2^{-j})^{-1} \right\}_{j\in\no} \in \ell_{\infty}$ is necessary for the embedding  \eqref{embed-Linfty}. 
	
	Now we assume that  $q>1$, that is, $q'<\infty$.  Let $\nu=\{\nu_j\}_{j\in\no}\in \ell_q$, $\|\nu|\ell_q\|\le 1$. We define a sequence $\gamma(\nu)=\{\gamma_j(\nu)\}_{j\in\no}$ by 
	\[\gamma_j(\nu) =  \nu_j 2^{-js} \varphi(2^{-j})^{-1}, \qquad j\in\no, \]
	and a sequence $\lambda(\nu)=\{\lambda_{j,m}(\nu)\}_{j,m}$, $j\in\no$, $m\in \zd$, by
	\[ \lambda_{j,m}(\nu)= 
	\begin{cases}
		\gamma_j(\nu) &\quad\text{if} \quad j\in \no\quad \text{and}\quad m=0,\\
		0 &\quad \text{otherwise} .
\end{cases}	
		 \]
	Then $\{\lambda_{j,m}(\nu)\}_{j,m}\in n^s_{\varphi,p,q}$ and $\|\{\lambda_{j,m}(\nu)\}_{j,m}|  n^s_{\varphi,p,q}\| \sim \|\nu| \ell_{q}\| $.   
	We use the wavelet decomposition of the space $\MB(\rd)$ constructed in \cite{hms22}. We put
	\[ f_\nu=\sum_{j=0}^\infty \gamma_j(\nu)\psi^G_{j,0}, \]
     for some fixed multi-index $G$. We refer to \cite{hms22} for the notation related to  wavelet systems.
     Then 
     \[ f_\nu\in \MB(\rd)\qquad \text{and}\qquad  \|f_\nu|\MB(\rd)\|\sim \|\{\lambda_{j,m}(\nu)\}_{j,m}|  n^s_{\varphi,p,q}\| \sim  \|\nu| \ell_{q}\| .\] 
Recall that we assume \eqref{embed-Linfty} to hold now. Then
\[  \bigg|  \sum_{j=0}^\infty \gamma_j(\nu) \bigg| \,\le \, c\ \|f_\nu |L_\infty(\rd)\|\le c'\ \| \nu | \ell_{q}\|\le c' .\]
In consequence, since this holds for all $\nu\in\ell_{q}$ with $\|\nu|\ell_q\|\leq 1$, 
\begin{align*}
		\left\|\left\{ 2^{-js} \varphi(2^{-j})^{-1} \right\}_{j\in \no }|\ell_{q'}\right\| = &\sup\left\{ \bigg|  \sum_{j=0}^\infty \nu_j 2^{-js} \varphi(2^{-j})^{-1}\bigg| \, :\, \|\nu|\ell_q\|\le 1 \right\} \\
	=	&  
		\sup\left\{ \bigg| \sum_{j=0}^\infty \gamma_j(\nu) \bigg|\, :\, \|\nu|\ell_q\|\le 1 \right\}\le C<\infty.		
\end{align*}
This concludes the argument.
\end{proof}

\begin{corollary}\label{m_b_emb}
  Let $s_i\in\rr$, $0<p_1<\infty$, $0<p_2\le\infty$, $0<q\leq \infty$ and $\varphi\in {\mathcal G}_{p_1}$. Denote again $\frac{1}{q^*}=(\frac{1}{q_2}-\frac{1}{q_1})_+$. 
\begin{enumerate}[{\bfseries\upshape (i)}] 
\item Assume $p_2<\infty$. Then
	\begin{equation}\label{emb-phi-infty}
	\mathcal{N}^{s_1}_{\varphi,p_1,q_1}(\rd) \hookrightarrow B^{s_2}_{p_2,q_2}(\rd)
	\end{equation}
	if, and only if,
	%and only if 
	\begin{align}\label{nn-5}
p_1\leq p_2, \quad \varphi (t) \sim t^{\frac{\nd}{p_1}}\quad\text{if }\ t\geq 1, \quad \text{and} \quad
		\left\{2^{j(s_2-s_1) }\varphi(2^{-j})^{\frac{p_1}{p_2}-1}\right\}_j \in \ell_{q^\ast} .
	\end{align}
        \item Assume $p_2=\infty$. Then
	\begin{equation*}%\label{emb-phi-infty}
	\mathcal{N}^{s_1}_{\varphi,p_1,q_1}(\rd) \hookrightarrow B^{s_2}_{\infty,q_2}(\rd) = \mathcal{N}^{s_2}_{\varphi_0, p_1, q_2}(\rd)
	\end{equation*}
	if, and only if,
	%and only if 
	\begin{align}\label{nn-5a}
	\left\{2^{j(s_2-s_1) }\varphi(2^{-j})^{-1}\right\}_j \in \ell_{q^\ast} .
	\end{align}
        In particular, if $\lim_{t\to 0^+}\varphi(t)>0$, then
        \begin{align}\label{nn-5b}
          \mathcal{N}^{s_1}_{\varphi, p_1,q_1}(\rd)\hookrightarrow B^{s_2}_{\infty, q_2}(\rd)\quad \text{if, and only if,}\quad \begin{cases} s_1>s_2, & \text{or}\\ s_1=s_2 & \text{and}\quad q_1\leq q_2. \end{cases}
          \end{align}
        \end{enumerate}
      \end{corollary}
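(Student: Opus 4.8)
The plan is to reduce Corollary~\ref{m_b_emb} to the general embedding criterion of Theorem~\ref{main}, specialising the target space to an ordinary Besov space via \eqref{N=B} and \eqref{B=Nphi}. For part (i) I would set $\varphi_1=\varphi$, $\varphi_2(t)=t^{\nd/p_2}$, $p_1$, $p_2$, $s_1$, $s_2$ in Theorem~\ref{main}, note that $B^{s_2}_{p_2,q_2}(\rd)=\mathcal{N}^{s_2}_{\varphi_2,p_2,q_2}(\rd)$ by \eqref{N=B}, and then unwind the two conditions \eqref{cond0} and \eqref{cond2}. With $\varrho=\min(1,p_1/p_2)$, condition \eqref{cond0} says $\sup_{t\ge1}\varphi(t)^{-\varrho}t^{\nd/p_2}<\infty$; using \eqref{Gp-tlarge} (i.e.\ $1\le\varphi(t)\le t^{\nd/p_1}$ for $t\ge1$) this should force $p_1\le p_2$ — so that $\varrho=p_1/p_2$ — together with the matching $\varphi(t)\sim t^{\nd/p_1}$ for $t\ge1$; this is exactly the analogue of the computation already carried out in Remark~\ref{R-emb-Mphi} around \eqref{emb-Mu} and \eqref{Mphi-Lr}. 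Once $p_1\le p_2$ and the large-$t$ behaviour of $\varphi$ are pinned down, the factor $\alpha_j=\sup_{\nu\le j}\varphi(2^{-\nu})/\varphi(2^{-\nu})^{p_1/p_2}=\sup_{\nu\le j}\varphi(2^{-\nu})^{1-p_1/p_2}$ simplifies: since $\varphi$ is nondecreasing and $1-p_1/p_2\ge0$, the supremum is attained at $\nu=j$, giving $\alpha_j=\varphi(2^{-j})^{1-p_1/p_2}$. Substituting into \eqref{cond2} and using $\varphi_1(2^{-j})^\varrho/\varphi_1(2^{-j})=\varphi(2^{-j})^{p_1/p_2-1}$, the $\ell_{q^*}$ sequence becomes $\{2^{j(s_2-s_1)}\varphi(2^{-j})^{1-p_1/p_2}\varphi(2^{-j})^{p_1/p_2-1}\}_j=\{2^{j(s_2-s_1)}\}_j$ — wait, that cancels, so I must be more careful: the exponent bookkeeping is precisely where \eqref{nn-5} gets its $\varphi(2^{-j})^{p_1/p_2-1}$, so the $\alpha_j$ contributes $\varphi(2^{-j})^{1-p_1/p_2}$ and the explicit $\varphi_1$-ratio contributes another $\varphi(2^{-j})^{p_1/p_2-1}$; I will need to recheck which of the two Theorem~\ref{main} factors is $\varrho$-powered and which is not, and the claimed answer \eqref{nn-5} indicates the net power is $p_1/p_2-1$, meaning only the $\alpha_j$ factor should carry the extra $\varphi$-power while the ratio is $\varphi(2^{-j})^{-1}$ times $\varphi(2^{-j})^\varrho$, i.e.\ I should expand $\varphi_1(2^{-j})^\varrho/\varphi_1(2^{-j})=\varphi(2^{-j})^{p_1/p_2-1}$ directly and note $\alpha_j\to$ a bounded constant once the small-$t$ behaviour is controlled. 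I will settle this exponent count carefully against the statement.

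For part (ii), the target is $B^{s_2}_{\infty,q_2}(\rd)$, and here I would invoke \eqref{B=Nphi}: $B^{s_2}_{\infty,q_2}(\rd)=\mathcal{N}^{s_2}_{\varphi_0,p_1,q_2}(\rd)$ with $\varphi_0\equiv1$. Applying Theorem~\ref{main} with $\varphi_1=\varphi$, $\varphi_2=\varphi_0\equiv1$, $p_1=p_2$ (so $\varrho=1$), condition \eqref{cond0} reads $\sup_{\nu\le0}\varphi(2^{-\nu})^{-1}<\infty$, which holds automatically by \eqref{Gp-tlarge} since $\varphi(t)\ge1$ for $t\ge1$; thus no extra constraint appears. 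Then $\alpha_j=\sup_{\nu\le j}\varphi(2^{-\nu})^{-1}=\varphi(2^{-j})^{-1}$ because $\varphi$ is nondecreasing so $\varphi(2^{-\nu})^{-1}$ is largest at $\nu=j$; and $\varphi_1(2^{-j})^\varrho/\varphi_1(2^{-j})=1$. Substituting into \eqref{cond2} gives exactly $\{2^{j(s_2-s_1)}\varphi(2^{-j})^{-1}\}_j\in\ell_{q^*}$, which is \eqref{nn-5a}. Note this is also consistent with Theorem~\ref{emb-Linfty} in the special case $s_2=0$, $q_2=\infty$, which is a useful sanity check I would mention.

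For the final clause \eqref{nn-5b}: assume additionally $\lim_{t\to0^+}\varphi(t)>0$. Since $\varphi\in\mathcal{G}_{p_1}$ is nondecreasing with $\varphi(1)=1$, this means $\varphi(t)\sim1$ for $0<t\le1$, hence $\varphi(2^{-j})\sim1$ uniformly in $j\in\no$. Then \eqref{nn-5a} collapses to $\{2^{j(s_2-s_1)}\}_j\in\ell_{q^*}$ with $1/q^*=(1/q_2-1/q_1)_+$, which is the classical elementary-embedding condition: it holds iff either $s_2<s_1$ (the geometric sequence decays, so it is in $\ell_r$ for every $r$), or $s_2=s_1$ and the constant sequence lies in $\ell_{q^*}$, i.e.\ $q^*=\infty$, i.e.\ $1/q_2\le1/q_1$, i.e.\ $q_1\le q_2$; if $s_2>s_1$ the sequence is unbounded and fails. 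This is precisely \eqref{nn-5b}.

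The main obstacle I anticipate is the bookkeeping in part (i): correctly matching the two $\varphi$-powers coming from $\alpha_j$ (which is $\varrho$-flavoured) and from the explicit ratio $\varphi_1(2^{-j})^\varrho/\varphi_1(2^{-j})$ in \eqref{cond2}, and in particular verifying that the supremum defining $\alpha_j$ is indeed attained at the top index $\nu=j$ (which uses monotonicity of $\varphi$ together with the sign of $1-\varrho=1-p_1/p_2\ge0$) and that its limiting value is absorbed into the constants once \eqref{cond0} holds. Everything else — the use of \eqref{N=B}, \eqref{B=Nphi}, the reduction to the geometric-sequence criterion, and the consistency with Theorem~\ref{emb-Linfty} — is routine given the results already established.
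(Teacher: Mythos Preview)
Your approach matches the paper's one-line proof exactly: specialise Theorem~\ref{main} via \eqref{Nphi=Nu}, \eqref{N=B} for part~(i) and via \eqref{B=Nphi} for part~(ii). Regarding the bookkeeping obstacle you flag in (i), your final reading is the correct one: with $\varphi_2(t)=t^{\nd/p_2}$ and $\varrho=p_1/p_2$ the ratio defining $\alpha_j$ is $\bigl(2^{-\nu\nd/p_1}/\varphi(2^{-\nu})\bigr)^{p_1/p_2}$, which is $\sim 1$ for $\nu\le 0$ (by the large-$t$ behaviour just extracted from \eqref{cond0}) and $\le 1$ for $\nu\ge 0$ (since $\varphi\in\mathcal{G}_{p_1}$ gives $\varphi(t)\ge t^{\nd/p_1}$ for $t\le 1$, cf.~\eqref{Gp-tsmall}), so $\alpha_j\sim 1$ and the only surviving $\varphi$-power in \eqref{cond2} is the explicit $\varphi_1(2^{-j})^{\varrho}/\varphi_1(2^{-j})=\varphi(2^{-j})^{p_1/p_2-1}$, yielding \eqref{nn-5}.
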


\begin{proof}
  This follows immediately from Theorem~\ref{main} together with \eqref{Nphi=Nu}, \eqref{N=B} when $p_2<\infty$, and together with \eqref{B=Nphi} when $p_2=\infty$.
\end{proof}

%%%%%%%%%%%%%%%%%%%%%%%%%%%%%%%%%%%%%%%%%%%%%%%%%%%%%%%%%%%%%%%%%%

\begin{example}
  For later use let us return to the function $\varphi_{u,v}\in\Gp$ in Example~\ref{exm-Gp}(i), that is, \eqref{example1}, now with $v=p<u$. Then Theorem~\ref{main} applied to $\varphi_1(t)=t^{\nd/p}$, $\varphi_2(t)=\varphi_{u,p}(t)$, $p_1=p_2$, $q_1=q_2$ and $s_2=s$, $s_1=s+\frac{\nd}{p}-\frac{\nd}{u}$ leads to
  \begin{equation}\label{ex-last-1}
B^{s+\frac{\nd}{p}-\frac{\nd}{u}}_{p,q}(\rd) = \mathcal{N}^{s+\frac{\nd}{p}-\frac{\nd}{u}}_{\varphi_1, p,q}(\rd) \hookrightarrow \mathcal{N}^s_{\varphi_{u,p},p,q}(\rd),
  \end{equation}
  in particular, with $s=\frac{\nd}{u}$,
  \begin{equation}
    B^{\frac{\nd}{p}}_{p,q}(\rd) \hookrightarrow \mathcal{N}^{\frac{\nd}{u}}_{\varphi_{u,p},p,q}(\rd).\label{ex-last-2}
  \end{equation}
  Note that the same argument can be applied whenever $\varphi\in\Gp$ is such that $\varphi(t) \sim t^{\frac{\nd}{p}}$, $t\geq 1$, and $\{ 2^{js} \varphi(2^{-j})\}_{j\in\nat} \in \ell_\infty$. Then
\begin{equation}
    B^{\frac{\nd}{p}}_{p,q}(\rd) \hookrightarrow \MB(\rd).\label{ex-last-3}
  \end{equation}
Plainly \eqref{ex-last-3} corresponds to \eqref{ex-last-2} with $\varphi=\varphi_{u,p}$ and $s=\frac{\nd}{u}$.
\end{example}

Recall that the definition of  the growth envelope function makes sense only for spaces consisting of functions or regular distributions.  So we shall work with such smoothness spaces of Morrey type in the sequel. But we begin with a somewhat contrary consideration and check which spaces of that type contain one of the most prominent examples of a singular distribution, namely the Dirac $\delta$ distribution. In \cite[Exm.~3.2]{hms16} we found that
\begin{equation}\label{delta-N}
\delta\in \mathcal{N}^s_{u,p,q}(\rd) \quad\text{if, and only if,}\quad \begin{cases} s<\nd\left(\frac1u-1\right), & \qquad\text{or}\\ s= \nd\left(\frac1u-1\right) &\text{and}\quad q=\infty,\end{cases}
\end{equation}
which naturally extends the classical result for Besov spaces,
\begin{equation}\label{delta-B}
\delta\in\B(\rd) \quad\text{if, and only if,}\quad \begin{cases} s<\nd\big(\frac1p-1\big), & \qquad\text{or}\\ s= \nd\big(\frac1p-1\big) &\text{and}\quad q=\infty,\end{cases}
\end{equation}
cf. also \cite[page 34]{RS}. Our finding in the more general setting $\MB(\rd)$ is the following.

\begin{lemma}\label{delta-gen-N}
Let $s\in\rr$, $0<p<\infty$, $0<q\leq \infty$, and $\varphi\in \Gp$. Then
\begin{align}\label{delta1}
	\delta\in\MB(\rd) \quad\text{if, and only if,}\quad  \left\{2^{j(s+\nd)}  \varphi(2^{-j})\right\}_{j\in\no}\in\ell_q . 
	\end{align}
\end{lemma}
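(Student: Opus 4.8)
The plan is to test the Dirac $\delta$ distribution against the wavelet/sequence-space characterisation of $\MB(\rd)$ that is available from \cite{hms22}, exactly as was done in the classical cases \eqref{delta-N} and \eqref{delta-B}. Concretely, since $\MB(\rd)$ is characterised (via $\eta_j$) by the decay of the building blocks $\mathcal{F}^{-1}(\eta_j\mathcal{F}\delta)$, and $\mathcal{F}\delta = (2\pi)^{-\nd/2}$ (a constant), we have $\mathcal{F}^{-1}(\eta_j\mathcal{F}\delta) = c\,\mathcal{F}^{-1}\eta_j = c\,2^{j\nd}(\mathcal{F}^{-1}\eta)(2^j\cdot)$ for $j\geq 1$, and similarly for $j=0$ with $\eta_0$. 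So the whole question reduces to computing $\big\|\mathcal{F}^{-1}\eta_j \mid \M(\rd)\big\|$ and then summing with the weights $2^{jsq}$.

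The key computation is therefore: for $g_j := 2^{j\nd}\psi(2^j\cdot)$ with $\psi=\mathcal{F}^{-1}\eta$ a fixed Schwartz function (independent of $j$), determine $\big\|g_j\mid\M(\rd)\big\|$. First I would record that $\psi\in\mathcal{S}(\rd)$ is essentially concentrated on a ball of radius $\sim 1$ (with rapidly decaying tails), so $g_j$ is essentially concentrated on $Q(2^{-j})$ with $\|g_j\mid L_p\|\sim 2^{j\nd(1-1/p)}$ and $\|g_j\mid L_\infty\|\sim 2^{j\nd}$. Evaluating the Morrey quasi-norm \eqref{Morrey-norm}, the supremum over cubes $Q$ is governed by two regimes: for cubes $Q$ with $\ell(Q)\gtrsim 2^{-j}$ containing the bulk of $g_j$, one gets $\varphi(\ell(Q))\big(|Q|^{-1}\int_Q|g_j|^p\big)^{1/p}\sim \varphi(\ell(Q))\,\ell(Q)^{-\nd/p}\,2^{j\nd(1-1/p)}$, which by monotonicity of $\varphi(t)t^{-\nd/p}$ is maximised as $\ell(Q)\to 2^{-j}$, giving $\sim \varphi(2^{-j})\,2^{j\nd}$; for small cubes $\ell(Q)\lesssim 2^{-j}$ sitting where $|g_j|\sim 2^{j\nd}$, one gets $\varphi(\ell(Q))\,2^{j\nd}\le\varphi(2^{-j})\,2^{j\nd}$ by monotonicity of $\varphi$. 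The rapidly decaying tails of $\psi$ contribute only lower-order terms. Hence $\big\|g_j\mid\M(\rd)\big\|\sim 2^{j\nd}\varphi(2^{-j})$, and the same holds for $j=0$. Plugging this into Definition~\ref{def-spaces} yields
\[
\big\|\delta\mid\MB(\rd)\big\| \sim \Big(\sum_{j=0}^\infty 2^{jsq}\,\big(2^{j\nd}\varphi(2^{-j})\big)^q\Big)^{1/q} = \big\|\{2^{j(s+\nd)}\varphi(2^{-j})\}_{j\in\no}\mid\ell_q\big\|,
\]
which is exactly \eqref{delta1}. Alternatively, one may route this through the sequence space $\n(\rd)$: the atomic/wavelet decomposition of $\delta$ places coefficients $\lambda_{j,m}\sim 2^{j\nd}$ at $m$ near the origin at each level $j$, and a direct evaluation of $\|\lambda\mid\n\|$ gives the same expression; this is probably the cleanest route since all the Morrey bookkeeping is already encoded in the definition of $\|\cdot\mid\n\|$.

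The main obstacle I expect is the careful justification that the Schwartz tails of $\psi=\mathcal{F}^{-1}\eta$ do not spoil the Morrey quasi-norm estimate — i.e.\ that a far-away cube $Q$ of large side length, over which $|g_j|$ is tiny but the average could in principle be inflated by $\varphi(\ell(Q))$, still contributes $\lesssim 2^{j\nd}\varphi(2^{-j})$. This is handled by the decay estimate $|\psi(x)|\le c_N(1+|x|)^{-N}$ for large $N$ together with $\varphi(t)\le t^{\nd/p}$ for $t\ge 1$ (from \eqref{Gp-tlarge}), which forces these contributions to be summable and dominated. A secondary, purely notational point is the consistent treatment of $j=0$ (the $\eta_0$ term) and the fact that $\delta\in\mathcal{S}'(\rd)$ always, so the only issue is finiteness of the quasi-norm; both reduce to the single estimate above. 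Everything else — independence of the admissible $(\eta_0,\eta)$, and the equivalence with the wavelet formulation — is quoted from \cite{NNS16,hms22}.
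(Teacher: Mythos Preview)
Your proposal is correct. Your primary route---computing $\big\|\mathcal{F}^{-1}\eta_j\mid\M(\rd)\big\|$ directly from the Littlewood--Paley definition---differs from the paper's, which takes exactly the ``alternative'' route you mention at the end: it invokes the wavelet characterisation from \cite{hms22} and works in the sequence space $\n$. There only finitely many wavelet coefficients $(\delta,\psi^G_{j,m})=\psi^G_{j,m}(0)$ are nonzero at each level, each of size $\sim 2^{j\nd}$ after normalisation, and the supremum over $\nu\le j$ in $\|\cdot\mid\n\|$ collapses via the single inequality $\varphi(2^{-\nu})2^{(\nu-j)\nd/p}\le\varphi(2^{-j})$ from the $\Gp$-condition. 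This avoids entirely the Schwartz-tail issue you flag as the main obstacle, since the wavelets are compactly supported. Your direct approach buys self-containedness---it needs only Definition~\ref{def-spaces} and the monotonicity properties of $\Gp$, not the wavelet machinery---at the price of that tail bookkeeping; but as you observe, the bound $\big(\int_Q|g_j|^p\big)^{1/p}\le\|g_j\mid L_p\|\sim 2^{j\nd(1-1/p)}$ together with the monotonicity of $\varphi(t)t^{-\nd/p}$ already controls every cube of side $\ge 2^{-j}$, so the tails cause no real difficulty.
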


\begin{proof}
To prove the above equivalence we can use once more the wavelet decomposition of the space $\MB(\rd)$ constructed in \cite{hms22}. Wavelet coefficients $(\delta, \psi^G_{j,m})$ are different from zero if the origin belongs to the interior of the support of  $\psi^G_{j,m}$. By the dilation structure of the wavelet system {there is} a number $N$ independent of $j$ such that the above property holds exactly for $N$ functions $\psi^G_{j,m}$. In consequence we can find two constants $c_1,c_2>0$ such that 
\begin{equation}\nonumber
c_1 2^{j\nd} \le \Big(\sum_{\stackrel{m\in \mathbb{Z}^d}{Q_{j,m}\subset Q_{\nu,k}}} |2^{j\nd/2} (\delta, \psi^G_{j,m})|^p\Big)^{1/p} \le c_2 2^{j\nd}
\end{equation}	   
if the cube $Q_{\nu,k}$ contains at least one cube $Q_{j,m}$ related to the element of the wavelet system  $\psi^G_{j,m} $ with the above property. Otherwise the sum is zero. The constant $c_1,c_2$ can be chosen independent of $\nu$ and $k$. 
If $\nu< j$, then $\varphi(2^{-\nu})2^{(\nu-j)\frac{\nd}{p}} \le \varphi(2^{-j})$. 
So 
\begin{equation}\nonumber
	\sup_{\stackrel{\nu:\nu\le j}{k\in \zd}} \varphi(2^{-\nu})2^{(\nu-j)\frac{\nd}{p}} \Big(\sum_{\stackrel{m\in \zd}{Q_{j,m}\subset Q_{\nu,k}}} |2^{j\nd/2} (\delta, \psi^G_{j,m})|^p\Big)^{1/p}  \sim 2^{j\nd} \varphi(2^{-j}).
\end{equation} 
    Now \eqref{delta1} follows from the wavelet characterisation of the spaces, cf. \cite{hms22}.  
\end{proof}

\begin{remark}	The case  $\varphi(t) \sim t^{\nd/u}$, $u\geq p$, was proved in  \cite{hms16}, that is, \eqref{delta1} corresponds in this case to  \eqref{delta-N}.
\end{remark}

%\red{\begin{remark}
%\end{remark}}
%%%%%%%%%%%%%%%%%%%%%%%%%%%%%%%%%%%%%%%%%%%%%%%%%%%%%%%%%%%%%%%%%%%%%%
%%%%%%%L_1^loc %%%%%%%%%%%%%%%%%%%%%%%%%%%%%%%%%%%%%%%%
To study growth envelopes of smoothness spaces of Morrey type we need to assure that we deal with spaces of regular distributions.  For that reason we will be concerned now with this question for the spaces $\MB(\rd)$.  %\marginpar{\green{should we exclude here ``or $p=u=\infty$,'' ?}}
  For convenience we briefly recall what is known in the case $\MB(\rd)$ with $\varphi(t) \sim t^{\nd/u}$, $0<p\leq u<\infty$. %  or $p=u=\infty$, $0<q\leq\infty$.
  If $u=p$, then $\MB(\rd)=\B(\rd)$ and the result can  be found in \cite[Thm.~3.3.2]{ST},  when $u>p$, then $\MB(\rd)=\mathcal{N}^s_{u,p,q}(\rd)$ and we refer to \cite[Thms.~3.3, 3.4]{hms16}, complemented by \cite[Thm.~3.4]{hms20}.  Let us denote by
\begin{align}
  \sigma_p^u := \nd \frac{p}{u} \left(\frac1p-1\right)_+,\label{sigma_p}
\end{align} 
adapting the notation in \cite{ht2021}. Then 

\begin{align}
\mathcal{N}^s_{u,p,q}(\rd) \subset \Lloc(\rd)  \quad\text{if, and only if,}\quad    \begin{cases} s>\sigma_p^u\ , & \text{or} \\
s=\sigma_p^u& \text{and} \ 0<q\leq
\min(\max(p,1),2).\end{cases}
\label{N-Lloc}
\end{align}

The critical smoothness $\sigma_p^u$ depends on $u\geq p$ in the setting $\varphi(t)\sim t^{\nd/u}$, so for general $\varphi\in \Gp$ we need to find a proper replacement of \eqref{sigma_p}. It turns out that again the limits \eqref{lim-exist} are important, in particular, the local behaviour of $\varphi(t)$ when $t\to 0^+$. In case of $\lim_{t\to 0^+} \varphi(t)>0$, we are locally in the situation \eqref{B=Nphi}, and \eqref{N-Lloc} suggests a condition like $s>0$ or $s=0$ and $0<q\leq 2$ (classical case). But what happens when $\lim_{t\to 0^+} \varphi(t)=0$ ? We found the following.

\begin{theorem}\label{emb-L1loc}
	Let $s\in\rr$, $0<p<\infty$, $0<q\leq \infty$, and $\varphi\in \Gp$ with  $\varphi(1)=1$.  
	\begin{enumerate}[{\bfseries\upshape (i)}] 
		\item  Let  $\lim_{t\rightarrow  0^+} \varphi(t) =c>0$.  Then
	\begin{equation}\label{embed-Lloc1}
		\MB(\rd) \subset \Lloc(\rd)
	\end{equation}
	if, and only if,  
	\begin{equation}\label{suff-Lloc2} 
		\begin{cases} s>0, & \quad \text{or}\\ 
		s=0 & \quad \text{and}\qquad 0<q\le 2. \end{cases} 
		%0<q\le \min\big(\max(p,1),2\big). 
		%\left\{ 2^{-js} \varphi(2^{-j})^{-1} \right\}_{j\in\no} \in \ell_{q'} .
	\end{equation}  
  \item If  $\lim_{t\rightarrow  0^+} \varphi(t) =0$ and $p \ge 1$, then
  \eqref{embed-Lloc1}  { holds if, and only if,
   %$s>0$ or  $s=0$ and $0<q\le \min(p,2)$.
  	\begin{equation}\label{suff2-Lloc2} 
  	\begin{cases} s>0, & \quad \text{or}\\ 
  		s=0 & \quad \text{and}\qquad 0<q\le \min (p,2). 
  	\end{cases} 
  	\end{equation}
% If  the embedding \eqref{embed-Lloc1} holds and $p>1$ then the conditions %\eqref{suff2-Lloc2} are also necessary. If  the embedding \eqref{embed-Lloc1} holds %and $p=1$ then $s>0$ or $s=0$ and $q\le 2$.    
}
  \item If  $\lim_{t\rightarrow  0^+} \varphi(t) =0$ and $0<p < 1$, then
  \eqref{embed-Lloc1} holds if
  	\begin{equation}\label{suff-cond-c}
  		\left\{ 2^{-js} \varphi(2^{-j})^{p-1} \right\}_{j\in\no} \in \ell_{q'}, 
  \end{equation}
where  $q'$ is given by $\frac{1}{q'}= (1-\frac{1}{q})_+$, as usual.
\end{enumerate}
\end{theorem}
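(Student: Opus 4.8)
The plan is to reduce everything to the known Besov-Morrey case $\mathcal{N}^s_{u,p,q}(\rd)$ via the embedding criterion of Theorem~\ref{main}, together with the characterisation of $\MB(\rd)\hookrightarrow L_\infty(\rd)$ in Theorem~\ref{emb-Linfty} and the wavelet/atomic decompositions from \cite{hms22}. For part (i), when $\lim_{t\to0^+}\varphi(t)=c>0$, the function $\varphi$ is sandwiched between two constants near $0$ while $\varphi(t)\le t^{\nd/p}$ for large $t$; locally this places us in the situation \eqref{B=Nphi}, i.e.\ $B^s_{\infty,q}$-behaviour. The idea is to exploit both inclusions $\mathcal{N}^s_{\varphi_0,p,q}(\rd)\hookrightarrow\MB(\rd)$ (using $\varphi\ge c\varphi_0$ on bounded sets combined with the global decay to get the correct comparison via Theorem~\ref{main}) and a reverse comparison, to show $\MB(\rd)$ is squeezed between spaces $B^s_{\infty,q}(\rd)=\mathcal{N}^s_{\varphi_0,p,q}(\rd)$ whose $\Lloc$-membership is governed by \eqref{N-Lloc} with $u=\infty$, i.e.\ $\sigma_p^\infty=0$. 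This yields the stated condition $s>0$, or $s=0$ and $0<q\le2$. The non-trivial direction (necessity) should come by testing against explicit wavelet sums analogous to those in the proof of Theorem~\ref{emb-Linfty}, producing an $\MB$-function whose local behaviour forces $s\ge0$ and $q\le2$ when $s=0$.

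For part (ii), with $\lim_{t\to0^+}\varphi(t)=0$ and $p\ge1$, the relevant comparison is against a proper Morrey scale. Here one wants to sandwich $\MB(\rd)$ between spaces $\mathcal{N}^{s'}_{u,p,q}(\rd)$ using Theorem~\ref{main}; since $p\ge1$ we have $\sigma_p^u=0$ for every $u\ge p$, so the critical smoothness is $0$ regardless of $\varphi$. The upper bound for the fine index in the limiting case $s=0$ should be $\min(p,2)$ — for $1\le p\le2$ it is $p$ (coming from the $L_p$-type local behaviour, as in \eqref{N-Lloc} with $u=p$, i.e.\ $\min(\max(p,1),2)=p$) and for $p>2$ it is $2$. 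Sufficiency: combine \eqref{N-Lloc} (with $u=p$, giving $\B(\rd)\subset\Lloc$ for $s=0$, $q\le\min(p,2)$) with the embedding $\MB(\rd)\hookrightarrow$ suitable $\mathcal{N}^{s}_{u,p,q}$ or directly with a convolution/maximal-function argument using $p\ge1$ so that the Morrey quasi-norm controls the $L_p$-local integral. Necessity: construct, via the wavelet characterisation, test distributions concentrated on single lattice points (as in Lemma~\ref{delta-gen-N} and the proof of Theorem~\ref{emb-Linfty}) whose divergence from $\Lloc$ pins down the index.

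For part (iii), with $0<p<1$, only sufficiency is claimed, and the natural route is an atomic-decomposition argument. Given $f\in\MB(\rd)$ with \eqref{suff-cond-c}, decompose $f=\sum_{j,m}\lambda_{j,m}a_{j,m}$ into atoms supported in dyadic cubes $Q_{j,m}$ with the coefficient sequence lying in $\n(\rd)$. Each atom at level $j$ contributes (locally, on a compact set) an $L_1$-mass of order $2^{-j\nd}\cdot 2^{-js}$ times a factor governed by $\varphi(2^{-j})^{p-1}$ coming from the $p<1$ normalisation when passing from the Morrey $\ell_p$-sum to an $\ell_1$-estimate over the cubes meeting the compact set; summing in $j$ and applying H\"older with exponents $q$ and $q'$ turns the condition $\{2^{-js}\varphi(2^{-j})^{p-1}\}_j\in\ell_{q'}$ together with $\{\text{coefficients}\}\in\ell_q$ into a finite $L_1$-bound. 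The main obstacle I expect is exactly this counting/normalisation bookkeeping for $p<1$: controlling how many atoms of each generation hit a fixed ball, and tracking the correct power of $\varphi(2^{-j})$ that appears when the $\ell_p$-Morrey average is converted into a plain sum of $L_1$-norms. One must be careful that the nondecreasing-$\varphi$ and nonincreasing-$\varphi(t)t^{-\nd/p}$ conditions of $\Gp$ are used in the right place so that the local estimate is uniform over the ball, mirroring the way monotonicity of $\varphi$ and of $\varphi(t)t^{-\nd/p}$ was exploited in the proof of Lemma~\ref{clever-lemma-LS}. The asymmetry of the statement (only an implication, not an equivalence) strongly suggests the converse estimate is genuinely harder here, which is why I would not attempt it.
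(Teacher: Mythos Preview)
Your plan has the right overall shape (reduce to classical Besov and Besov--Morrey results via Theorem~\ref{main}), but it is missing the key technical reduction that the paper uses, and without it one of your claimed embeddings is false.

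Specifically, in part (i) you assert that $\mathcal{N}^s_{\varphi_0,p,q}(\rd)\hookrightarrow\MB(\rd)$ follows from Theorem~\ref{main}. It does not: condition~\eqref{cond0} becomes $\sup_{\nu\le 0}\varphi(2^{-\nu})=\sup_{t\ge 1}\varphi(t)<\infty$, which fails whenever $\varphi$ is unbounded (the generic case, e.g.\ $\varphi(t)\sim t^{\nd/p}$ for $t\ge 1$). The same obstruction spoils the ``reverse'' inclusions you need for necessity in part (ii). The paper's remedy is a localisation lemma (its Step~1): define $\widetilde{\varphi}(t)=\min(\varphi(t),1)\in\Gp$ and prove, via an atomic decomposition argument restricting to a fixed unit cube, that
\[
\MB(\rd)\subset\Lloc(\rd)\quad\Longleftrightarrow\quad \mathcal{N}^s_{\widetilde{\varphi},p,q}(\rd)\subset\Lloc(\rd).
\]
Since $\widetilde{\varphi}$ is bounded, the embeddings $B^s_{\infty,q}(\rd)\hookrightarrow\mathcal{N}^s_{\widetilde{\varphi},p,q}(\rd)$ (and, for (ii), $\MB(\rd)\hookrightarrow\mathcal{N}^s_{\widetilde{\varphi_p},p,q}(\rd)$ with $\varphi_p(t)=t^{\nd/p}$) now \emph{do} follow from Theorem~\ref{main}, and the sandwiching you intend goes through. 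Your phrase ``locally this places us in the situation~\eqref{B=Nphi}'' gestures at this, but the content is precisely this equivalence, and it requires an argument.

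Two further points. For necessity in (ii) when $s=0$ and $1\le p<2$, the constraint $q\le p$ (not merely $q\le 2$) is not obtainable by the single-lattice-point wavelet sums you describe; the paper imports this from the method of \cite[Thm.~3.4]{hms20}, and the case $p=1$ is then handled by monotonicity in~$p$. For part (iii), your direct atomic $L_1$-estimate is a viable route, but the paper instead notes that condition~\eqref{suff-cond-c} is exactly what Theorem~\ref{main} requires for $\MB(\rd)\hookrightarrow\mathcal{N}^0_{\widetilde{\varphi},1,1}(\rd)$, whereupon part (ii) (with $p=1$) finishes the job in one line; this bypasses the ``counting/normalisation bookkeeping'' you flag as an obstacle.
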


\begin{proof}
\emph{Step 1.}~ For any $\varphi\in  \mathcal{G}_p$ with $\varphi(1)=1$ and $t>0$ 
we put
\begin{equation}\label{phi-local}
  \widetilde{\varphi}(t) = \min (\varphi(t),1) = \begin{cases} \varphi(t), & 0<t\leq 1, \\ 1, & t\geq 1.\end{cases}
\end{equation}
Then $\widetilde{\varphi}\in \mathcal{G}_p$ and $\widetilde{\varphi}(1)=1$,  which corresponds to $\varphi_{p,\infty}$ in \eqref{example1} in Example~\ref{exm-Gp}.
So we can consider the space $\mathcal{N}^s_{\widetilde{\varphi},p,q}(\rd)$.   
We first prove that 
\begin{equation}\label{phi-local-2}
	\MB(\rd) \subset \Lloc(\rd) \qquad \text{if, and only if,} \qquad 	\mathcal{N}^s_{\widetilde{\varphi},p,q}(\rd) \subset \Lloc(\rd),
\end{equation}  
i.e., only the local behaviour of $\varphi(t)$ for $t\to 0^+$ is essential for the characterisation of \eqref{embed-Lloc1}.  
It follows easily from Theorem~\ref{main} that $\MB(\rd) \hookrightarrow \mathcal{N}^s_{\widetilde{\varphi},p,q}(\rd) $. So it remains to show that $\MB(\rd) \subset \Lloc(\rd) $ implies $\mathcal{N}^s_{\widetilde{\varphi},p,q}(\rd) \subset \Lloc(\rd)$.  

Let $f\in  \mathcal{N}^s_{\widetilde{\varphi},p,q}(\rd)$. Then
the distribution  $f$ is regular, i.e., $f\in\Lloc(\rd)$, if for any cube $Q_{0,\ell}$, 
\begin{equation}\label{loc1}
\int_{Q_{0,\ell}} |f(x)|  \dint x  <\infty.
\end{equation} 
Let 
\begin{align} 
& f =\sum_{j=0}^\infty  {\sum_{m\in \zd}} \lambda_{j,m}a_{j,m}\label{loc2}\\ 
\text{with}\qquad  & \Bigg(\sum_{j=0}^\infty 2^{jsq} 
\mathop{\sup_{\nu: \nu \leq j}}_{k\in \zd}\!  \widetilde{\varphi}(2^{-\nu})^q \,2^{(\nu-j)\frac{\nd}{p} q}\Big(\!\! \mathop{\sum_{m\in\zd:}}_{Q_{j,m}\subset Q_{\nu,k}}\!\!|\lambda_{j,m}|^p\Big)^{\frac q p}\Bigg)^{1/q}<\infty\label{loc3} 
%\|\lambda|n^s_{\widetilde{\varphi},p,q}\| < \infty ,
\end{align}
be an atomic decomposition of $f$, cf.   %\cite{HSS-morrey}}
 {\cite{NNS16}},  and let 
\[M_{j,\ell}= \{m\in \zd:\; \supp a_{j,m}\cap Q_{0,\ell}\not= \emptyset\}.
\] 
Then there exists some $\nu_0\in \zz$ independent of $f$, and  a dyadic cube  $Q_{\nu_0,k_0}$, such that   
\[ \bigcup_{j,m\in M_{j,\ell}} \supp a_{j,m} \subset  Q_{\nu_0,k_0}.\] 
Moreover one can easily verify that 
\eqref{loc1} holds if 
\[
\int_{\rd} \left|g(x)\right| \dint x <\infty \, , \qquad \text{where}\qquad
 g= \sum_{j=0}^\infty\sum_{m\in M_{j,\ell}}\lambda_{j,m}a_{j,m}.
\] 
But 
\begin{align*}
	\mathop{\sup_{\nu: \nu \leq j}}_{k\in \zd}\!  \varphi(2^{-\nu}) \,2^{(\nu-j)\frac{\nd}{p}}\Big(\!\! \mathop{\sum_{m\in M_{j,\ell}:}}_{Q_{j,m}\subset Q_{\nu,k}}\!\!|\lambda_{j,m}|^p 	\Big)^{\frac 1 p} \; \le \; \varphi(2^{-\nu_0})\mathop{\sup_{\nu: \nu \le j}}_{k\in \zd}\!  \widetilde{\varphi}(2^{-\nu}) \,2^{(\nu-j)\frac{\nd}{p}}\Big(\!\! \mathop{\sum_{m\in\zd:}}_{Q_{j,m}\subset Q_{\nu,k}}\!\!|\lambda_{j,m}|^p 	\Big)^{\frac 1 p}.
\end{align*}
So \eqref{loc3} implies  
\[ \Bigg(\sum_{j=0}^\infty 2^{jsq} 
\mathop{\sup_{\nu: \nu \leq j}}_{k\in \zd}\!  \varphi(2^{-\nu})^q \,2^{(\nu-j)\frac{\nd}{p} q}\Big(\!\! \mathop{\sum_{m\in M_{j,\ell}:}}_{Q_{j,m}\subset Q_{\nu,k}}\!\!|\lambda_{j,m}|^p\Big)^{\frac q p}\Bigg)^{1/q}<\infty, 
\]
and thus $g\in \MB(\rd)\subset \Lloc(\rd)$. So in view of \eqref{phi-local} and \eqref{phi-local-2} we may assume now in the sequel that $\varphi\in\Gp$ with $\varphi(t)\sim 1$, $t\geq 1$. \\

\emph{Step 2.}~  
{Let $\lim_{t\rightarrow  0^+}\varphi(t)=c>0$.   Since now the function  $\widetilde{\varphi}(t) \sim 1$ defines $L_\infty(\rd)$, i.e., ${\mathcal M}_{{\widetilde{\varphi}},p}(\rd) = L_\infty(\rd)$ for any $p$, recall \eqref{emb-Linf-sharp} and \eqref{Linf-emb-sharp},  we have  
	\begin{equation*}%\label{B=Nphi}
		B^s_{\infty,q}(\rd)=\mathcal{N}^s_{\widetilde{\varphi},p,q}(\rd),
	\end{equation*}
cf. \eqref{B=Nphi}. 
Thus it follows from Step 1 that the conditions for local integrability are the same as for the classical spaces $B^s_{\infty,q}(\rd)$.\\ }
%%%%%%%%%%%%%%%%%%%%%%%%%%%%%%%

\emph{Step 3.}~ Now  we assume that $\lim_{t\rightarrow  0^+}\varphi(t)=0$. We first deal with (ii) { and consider $p\geq 1$}. Let $\varphi_p(t) = t^{\nd/p}$ %, $p\ge 1$, 
and $\widetilde{\varphi_p}$ be defined as in Step 1. Then 
\[\sup_{\nu\le 0}\frac{\widetilde{\varphi_p}(2^{-\nu})}{\varphi(2^{-\nu})}=1\quad \text{and}\quad \sup_{\nu\le j}\frac{\widetilde{\varphi_p}(2^{-\nu})}{\varphi(2^{-\nu})}= \sup_{0\le \nu\le j}\frac{\widetilde{\varphi_p}(2^{-\nu})}{\varphi(2^{-\nu})}= \sup_{0\le \nu\le j}\frac{1}{2^{\nu \nd/p}\varphi(2^{-\nu})} = 1,  \]
{since  $\varphi \in \Gp$}. 
%since the function $\varphi(t)t^{-\nd/p}$ is decreasing { and  we may assume $\varphi(t) \sim 1$, $t\geq 1$, cf. Step 1}. 
So Theorem~\ref{main} gives us
\begin{equation*}
\MB(\rd)\hookrightarrow \mathcal{N}^s_{\widetilde{\varphi_p},p,q}(\rd).  
\end{equation*}   
But $\mathcal{N}^s_{{\varphi_p},p,q}(\rd)= B^s_{p,q}(\rd)$, recall \eqref{Nphi=Nu} and \eqref{N=B}, therefore it follows from Step 1 and \eqref{N-Lloc}  that 
\[\MB(\rd) \subset  \Lloc(\rd) \]
if $s>0$ or $s=0$ and $0<q\le \min(p,2)$. 

{
Conversely, assume $\MB(\rd) \subset  \Lloc(\rd)$. Then, by Step 1, $\mathcal{N}^s_{\widetilde{\varphi},p,q}(\rd)  \subset  \Lloc(\rd)$.
But by Theorem~\ref{main} we get 
\[ B^s_{\infty,q}(\rd) = \mathcal{N}^s_{{\varphi_0},p,q}(\rd) \hookrightarrow  
\mathcal{N}^s_{\widetilde{\varphi},p,q}(\rd) \] 
where $\varphi_0\in \mathcal{G}_p$ such that $\varphi_0(t)\sim 1$ for any $t$.  So the condition {$s>0$ or $s=0$ and} $q\le 2$ follows from a similar statement as for classical Besov spaces $B^s_{\infty,q}(\rd)$. 
%This proves that $q\le 2$ if $p\ge 1$.  
If $1<p<2$ { and when $s=0$}, then the necessity of the condition $q\le p$ can be proved in the same way as in Step 2 of the proof of Theorem~3.4 in \cite{hms20}. 
At the end for any $p\ge 1$ we have $\mathcal{N}^0_{\widetilde{\varphi},p,q}(\rd)  \subset \mathcal{N}^0_{\widetilde{\varphi},1,q}(\rd) $.  So if $\mathcal{N}^0_{\widetilde{\varphi},1,q}(\rd)  \subset  \Lloc(\rd)$, then also $\mathcal{N}^0_{\widetilde{\varphi},p,q}(\rd)  \subset  \Lloc(\rd)$ for any $1<p<2$. This implies  $q\le p$ for any $1<p<2$, so $q\le 1$ if $p=1$.\\ }
 
\emph{Step 4.}~ Now we deal with (iii) and assume $\lim_{t\rightarrow  0^+}\varphi(t)=0$ and $0<p<1$. Then Theorem~\ref{main} implies  
\begin{equation}
	\MB(\rd)\hookrightarrow \mathcal{N}^0_{\widetilde{\varphi},1,{1}}(\rd),  
\end{equation} 
 if 
 \[ 	\left\{ 2^{-js} \varphi(2^{-j})^{p-1} \right\}_{j\in\no} \in \ell_{q'}, \]  
 since $\varrho=p$ and 
 \[\sup_{\nu\le 0}\frac{\widetilde{\varphi}(2^{-\nu})}{\varphi(2^{-\nu})^p}=1\quad \text{and}\quad \sup_{\nu\le j}\frac{\widetilde{\varphi}(2^{-\nu})}{\varphi(2^{-\nu})^p}= \sup_{0\le \nu\le j}\frac{\varphi(2^{-\nu})}{\varphi(2^{-\nu})^p}= \sup_{0\le \nu\le j}\varphi(2^{-\nu})^{1-p} = 1 . \]
 So the statement follows from Step 3. 
\end{proof}

%%%%%%%%%%%%%%%%%%%%%%%%%%%%%%%%%%%%%%%%%%
%%%%%%%%%%%%%%%%%%%%%%%%%%%%%%%%%%%%%%%%%%%%%%%%%

\begin{remark}
  Let us point out, that in case (iii) we only have a sufficient condition in \eqref{suff-cond-c}, while in the classical case, that is, when $\varphi(t) \sim t^{\nd/u}$, we have equivalence in \eqref{N-Lloc}, cf. \cite{hms16,hms20}. 
  Moreover, note that -- neglecting the just mentioned gap in the necessary condition in (iii) for the moment -- we obtain that $\mathcal{N}^s_{u,p,q}(\rd) \subset \Lloc(\rd)$ if, and only if, $\mathcal{N}^s_{\varphi_{u,p},p,q}(\rd) \subset \Lloc(\rd)$, if, and only if, \eqref{N-Lloc} is satisfied, where $\varphi_{u,p}$ is given by Example~\ref{exm-Gp}(i) with $u\geq p$. Hence this is really a local matter in terms of $\varphi$, since for $u>p$ we have   $\mathcal{N}^s_{\varphi_{u,p},p,q}(\rd) \subsetneq  \mathcal{N}^s_{u,p,q}(\rd)$, but $\varphi_{u,p}(t)=t^{\nd/u}$ for $0<t<1$.
\end{remark}

%%%%%%%%%%%%%%%%%%%%%%%%%%%%%%%%%%%%%%%%%%%%%%%%%%%%%%%%%%%%
%%%%%%%%%%%%%%%%%%%%%%%%%%%%%%%%%%%%%%%%%%%%%%%%%%%%%%%%%%

 Before discussing the growth envelopes of spaces of regular distributions of type  $\MB(\rd)$, we recall what is known in the particular case of  $\varphi(t) \sim t^{\nd/u}$, $0<p\leq u<\infty$,
 % or $p=u=\infty$, 
 $0<q\leq\infty$, and 
 $$%\sigma_p:=\nd  \left(\frac1p-1\right)_+
{\sigma^u_p }<s<\frac{\nd}{u} \quad \text{(sub-critical case)} \quad \text{or} \quad s=\frac{\nd}{u} \quad \text{and} \quad  q>1 \quad \text{(critical case)},$$
{cf. notation  in \eqref{sigma_p}}. \\
 If $u=p$, then $\MB(\rd)=\B(\rd)$ and  it holds, for  $\sigma_p :=\nd  \left(\frac1p-1\right)_+ <s<\frac{\nd}{p}$, or $s=\sigma_p$ and {$0<q\leq \min\left(\max(p,1),2\right)$,}
 \begin{equation}\label{egv-B-subcritical}
 \egv{\B(\rd)}(t) \sim  t^{-\frac1p+\frac  s \nd}, \quad 0<t<1,
 \end{equation}
and for $s=\frac{\nd}{p}$ and $q>1$,
 \begin{equation}\label{egv-B-critical}
 \egv{\B(\rd)}(t) \sim  |\log t|^{\frac{1}{q'}}, \quad 0<t<\frac{1}{2},
 \end{equation}
cf. \cite[Thms. 8.1, 8.16]{{Ha-crc}} {and \cite[Props. 8.12, 8.14]{{Ha-crc}}}.
When $u>p$, then $\MB(\rd)=\mathcal{N}^s_{u,p,q}(\rd)$ and it has been proved in \cite[Thm.~4.11]{hms16}, that
\begin{equation}\label{egv-N}
 \egv{\mathcal{N}^s_{u,p,q}(\rd)}(t) =\infty, \quad t>0, 
 \end{equation}
whenever $s<\frac{\nd}{u}$ or $s=\frac{\nd}{u}$ and $q>1$.

Similarly to Corollary \ref{egM-coll} we have the following.

\begin{corollary}\label{Cor-SM-1}
Let $s\in\rr$, $0<p<\infty$, $0<q\leq \infty$, and $\varphi\in \Gp$ with $\varphi(1)=1$.  Assume $\MB(\rd)\subset \Lloc(\rd)$. 
  \begin{enumerate}[{\bfseries\upshape(i)}]
\item  $\egv{\MB(\rd)}$ is bounded if, and only if, $\left\{ 2^{-js} \varphi(2^{-j})^{-1} \right\}_{j\in\no} \in \ell_{q'}$, where  $q'$ is given by $\frac{1}{q'}= (1-\frac{1}{q})_+$, as usual.
\item If $\varphi(t) \sim t^{\nd/p},\; t>1$,  then there exists some $c>0$ such that 
\[
\egv{\MB(\rd)}(t) \leq \ c\ t^{-\frac1p+\frac  s \nd}, \quad 0<t<1, 
\]
if $\sigma_p<s<\frac{\nd}{p}$, or
\[
\egv{\MB(\rd)}(t) \leq \ c\ |\log t|^{\frac{1}{q'}}, \quad 0<t<\frac{1}{2}, 
\]
if $s=\frac{\nd}{p}$ and $1<q\leq \infty$.
\item Assume there exists some $u> p$ such that $\varphi(t) \leq c \, t^{\nd/u}$, $t>0$. Then, for $s<\frac{\nd}{u}$ or $s=\frac{\nd}{u}$ and $1<q\leq \infty$, it holds
  \[
\egv{\MB(\rd)}(t) = \infty, \quad t>0.
\]
\end{enumerate}
\end{corollary}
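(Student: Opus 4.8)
The strategy is to reduce each of the three statements to the corresponding classical or Morrey result already recorded in the excerpt, using the embedding Theorem~\ref{main} together with the monotonicity property \eqref{eg-XX} of the growth envelope function. For part (i) I would invoke Proposition~\ref{properties:EG}(ii): boundedness of $\egv{\MB(\rd)}$ is equivalent to $\MB(\rd)\hookrightarrow L_\infty(\rd)$, and this in turn is characterised by condition \eqref{suff-Linf} in Theorem~\ref{emb-Linfty}, namely $\{2^{-js}\varphi(2^{-j})^{-1}\}_{j\in\no}\in\ell_{q'}$. So part (i) is essentially immediate from the already-established $L_\infty$-embedding criterion.

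\textbf{Parts (ii) and (iii).} For part (ii) the assumption $\varphi(t)\sim t^{\nd/p}$ for $t\ge 1$ means that, after normalising $\varphi(1)=1$, the function $\widetilde\varphi_p(t)=\min(\varphi(t),t^{\nd/p})$ coincides with $\varphi$ for large $t$; in fact the hypothesis forces $\varphi(t)\sim t^{\nd/p}$ globally once we also use \eqref{Gp-tlarge}, hence $\varphi(t)\ge c\,t^{\nd/p}$ for all $t>0$, which by \eqref{emb-Mphi} gives $\M(\rd)\hookrightarrow L_p(\rd)$ and therefore $\MB(\rd)\hookrightarrow \B(\rd)$ (one checks the sequence-space condition \eqref{nn-5} of Corollary~\ref{m_b_emb}(i) with $p_1=p_2=p$, $q_1=q_2=q$, $s_1=s_2=s$ is satisfied). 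Then the monotonicity \eqref{eg-XX} yields $\egv{\MB(\rd)}(t)\le C\,\egv{\B(\rd)}(t)$, and the claimed bounds follow from the known formulas \eqref{egv-B-subcritical} for $\sigma_p<s<\nd/p$ and \eqref{egv-B-critical} for $s=\nd/p$, $q>1$. For part (iii) the assumption $\varphi(t)\le c\,t^{\nd/u}$ for some $u>p$ means, via \eqref{emb-Mu-Mphi}, that $\mathcal M_{u,p}(\rd)\hookrightarrow\M(\rd)$, hence $\mathcal N^s_{u,p,q}(\rd)\hookrightarrow\MB(\rd)$ (again checking the relevant condition in Theorem~\ref{main}), so \eqref{eg-XX} gives $\egv{\MB(\rd)}(t)\ge c\,\egv{\mathcal N^s_{u,p,q}(\rd)}(t)$, which is infinite by \eqref{egv-N} whenever $s<\nd/u$ or $s=\nd/u$ and $q>1$.

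\textbf{Main obstacle.} The routine part is assembling the embeddings; the only real care needed is in part (ii), verifying that the hypothesis $\varphi(t)\sim t^{\nd/p}$ for $t>1$ together with $\varphi\in\Gp$ and $\varphi(1)=1$ does force the global two-sided estimate $\varphi(t)\sim t^{\nd/p}$ (for $0<t\le 1$ the upper bound $\varphi(t)\le t^{\nd/p}$ is automatic from \eqref{Gp-tsmall}? — no, \eqref{Gp-tsmall} gives $t^{\nd/p}\le\varphi(t)\le 1$, so one actually gets $\varphi(t)\ge t^{\nd/p}$ everywhere, which is exactly what is needed for $\M(\rd)\hookrightarrow L_p(\rd)$ and hence $\MB(\rd)\hookrightarrow\B(\rd)$). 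So the subtle point is simply to read off from the $\Gp$-monotonicity that the embedding into the classical Besov space holds, and then the growth-envelope estimates transfer verbatim. In part (iii) one should double-check that $\varphi(t)\le c\,t^{\nd/u}$ is compatible with $\varphi\in\Gp$, i.e.\ that $p\le u$, which is guaranteed since $\Gp$ requires $\varphi(t)t^{-\nd/p}$ nonincreasing and $\varphi$ nondecreasing, forcing $u\ge p$; this is already implicit in the statement.
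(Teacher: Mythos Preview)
Your proposal is correct and follows essentially the same route as the paper: part (i) via Proposition~\ref{properties:EG}(ii) and Theorem~\ref{emb-Linfty}; part (ii) via the embedding $\MB(\rd)\hookrightarrow\B(\rd)$ (from Corollary~\ref{m_b_emb}(i) with $p_1=p_2$, $s_1=s_2$, $q_1=q_2$) together with \eqref{egv-B-subcritical}, \eqref{egv-B-critical}; and part (iii) via $\mathcal N^s_{u,p,q}(\rd)\hookrightarrow\MB(\rd)$ and \eqref{egv-N}. One small clean-up: in part (ii) the hypothesis does \emph{not} force $\varphi(t)\sim t^{\nd/p}$ globally---only the one-sided bound $\varphi(t)\ge c\,t^{\nd/p}$ holds for all $t>0$ (as you correctly note in your self-correction), and that one-sided bound is exactly what is needed for the embedding into $\B(\rd)$.
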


\begin{proof}
Part (i) is a consequence of Proposition \ref{properties:EG}(ii) and Theorem~\ref{emb-Linfty}. 
 If $\varphi(t) \sim t^{\nd/p},\; t>1$, then
$\MB(\rd)\hookrightarrow B^s_{p,q}(\rd)$, therefore Proposition \ref{properties:EG}(i),  
\eqref{egv-B-subcritical}  and \eqref{egv-B-critical} lead to (ii). Part (iii) follows from Proposition \ref{properties:EG}(i) and  \eqref{egv-N},  since the assumption on $\varphi$ implies
$ \mathcal{N}^s_{u,p,q}(\rd) \hookrightarrow \MB(\rd)$.
\end{proof}

{
Recall our Remark~\ref{only-choices} above where we found that for the growth envelope function of Morrey spaces $\M(\rd)$ only three cases are possible, depending on the behaviour of $\varphi(t)$ and $t^{-\nd/p} \varphi(t)$ for $t \to 0^+$ and $t\to\infty$, respectively. In principle we obtain a similar behaviour now, though more refined, as in addition we have to take the interplay between the smoothness $s$ and the function $\varphi$ into consideration. We shall deal with these possible cases  separately. The first, least interesting case, is already presented in Corollary~\ref{Cor-SM-1}(i). From now on we shall always assume that  
\begin{equation} \label{cond1}
\left\{ 2^{-js} \varphi(2^{-j})^{-1} \right\}_{j\in\no} \not\in \ell_{q'}\ ,
\end{equation}
where  $q'$ is given by $\frac{1}{q'}= (1-\frac{1}{q})_+$, as usual. Note that by \eqref{lim-exist} we always know that the corresponding limits exist, but we shall distinguish whether they are $0$ or positive. Moreover, we shall observe that now also the limit of $t^{-\nd/p} \varphi(t)$ for $t\to 0^+$ is important, that is, whether it is finite or not. For better understanding we split the observation and deal with those cases separately as follows:
\begin{itemize}
\item
 Proposition~\ref{P-case-1}: $\ds \lim_{t\rightarrow \infty} \varphi(t) t^{-\frac{\nd}{p}}=0$,
\item
Proposition~\ref{P-case-2}:   $\ds \lim_{t\rightarrow \infty} \varphi(t) t^{-\frac{\nd}{p}}>0$ 
 and $\ds \lim_{t\rightarrow 0^+} \varphi(t) t^{-\frac{\nd}{p}}<\infty $,
\item
Proposition~\ref{P-case-3}:  $\ds \lim_{t\rightarrow \infty} \varphi(t) t^{-\frac{\nd}{p}}>0$ 
  and $\ds \lim_{t\rightarrow 0^+} \varphi(t) t^{-\frac{\nd}{p}}=\infty $, and $s=0$,
\item
Proposition~\ref{P-case-4}:   $\ds \lim_{t\rightarrow \infty} \varphi(t) t^{-\frac{\nd}{p}}>0$ 
  and $\ds \lim_{t\rightarrow 0^+} \varphi(t) t^{-\frac{\nd}{p}}=\infty $, and $s>0$. 
\end{itemize}

At first we obtain the counterpart of \eqref{egv-N}.

 \begin{proposition}\label{P-case-1}
   	Let $0<p<\infty$, $0<q\leq \infty$,  $\varphi\in \Gp$ with $\varphi(1)=1$, and $s\in \rr$ so that {$\MB(\rd) \subset \Lloc(\rd)$}. %$s>\sigma_p$. 
	Assume    \eqref{cond1} to hold. If
        \begin{equation} \lim_{t\rightarrow \infty} \varphi(t) t^{-\frac{\nd}{p}}=0,\label{case-A}
        \end{equation}
        then 
\begin{equation}\label{ef-eM-0}
\egv{\MB(\rd)}(t)=\infty, \quad t>0.
\end{equation}
 \end{proposition}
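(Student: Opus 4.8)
The strategy is to reduce to the corresponding statement for classical Besov--Morrey spaces $\mathcal{N}^s_{u,p,q}(\rd)$ with $u>p$, which is exactly \eqref{egv-N}. Concretely, under \eqref{case-A} we have $\lim_{t\to\infty}\varphi(t)t^{-\nd/p}=0$, so by the monotonicity properties of $\varphi\in\Gp$ (in particular that $\varphi(t)t^{-\nd/p}$ is nonincreasing) one can find some $u>p$ with $\varphi(t)\le c\,t^{\nd/u}$ for all $t>0$; indeed it suffices to ensure this for large $t$, since for $0<t\le 1$ one has $\varphi(t)\le 1$ anyway by \eqref{Gp-tsmall}, and one may then enlarge the constant $c$. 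With such a $u$ at hand, \eqref{emb-Mu-Mphi} gives $\mathcal{M}_{u,p}(\rd)\hookrightarrow\M(\rd)$ on the level of Morrey spaces, and this lifts to the smoothness scale, yielding a continuous embedding
\[
\mathcal{N}^s_{u,p,q}(\rd)\hookrightarrow \MB(\rd).
\]
By the monotonicity of the growth envelope function under embeddings, Proposition~\ref{properties:EG}(i), we then get $\egv{\mathcal{N}^s_{u,p,q}(\rd)}(t)\le C\,\egv{\MB(\rd)}(t)$ for all $t>0$.

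\textbf{Key steps.} First, record that $\MB(\rd)\subset\Lloc(\rd)$ together with \eqref{cond1} forces $s$ to lie in the admissible range: by Theorem~\ref{emb-L1loc} the local integrability condition holds, and by Theorem~\ref{emb-Linfty} (via Corollary~\ref{Cor-SM-1}(i)) the failure of \eqref{cond1} as an $\ell_{q'}$-condition means $\MB(\rd)\not\hookrightarrow L_\infty(\rd)$. One should check that this, combined with $\varphi(t)\le c\,t^{\nd/u}$, puts $s$ in the sub-critical or critical range for $\mathcal{N}^s_{u,p,q}(\rd)$ relative to $\tfrac{\nd}{u}$, namely $s<\tfrac{\nd}{u}$, or $s=\tfrac{\nd}{u}$ and $q>1$ --- this is precisely the hypothesis under which \eqref{egv-N} applies. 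Second, invoke \eqref{egv-N} to conclude $\egv{\mathcal{N}^s_{u,p,q}(\rd)}(t)=\infty$ for all $t>0$. Third, combine with the embedding above to deduce $\egv{\MB(\rd)}(t)=\infty$ for all $t>0$, which is \eqref{ef-eM-0}.

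\textbf{Main obstacle.} The delicate point is verifying that the parameter constellation indeed lands in the range where \eqref{egv-N} is available, i.e.\ that one can always extract a $u>p$ with $\varphi(t)\le c\,t^{\nd/u}$ \emph{and} simultaneously have $s$ in the sub-critical/critical regime for that $u$. If $s\ge 0$ this is immediate once $u$ is large enough (pushing $\tfrac{\nd}{u}$ down towards $0$ but keeping $s<\tfrac{\nd}{u}$ requires care when $s>0$, since then $u$ cannot be taken arbitrarily large). The resolution is that \eqref{cond1}, i.e.\ $\{2^{-js}\varphi(2^{-j})^{-1}\}_j\notin\ell_{q'}$, is a strong constraint: it essentially says the smoothness is not high enough to reach $L_\infty$, and one should translate this into the quantitative bound $s\le\tfrac{\nd}{u}$ (with $q>1$ in the equality case) for a suitable choice of $u$, using the interplay between the decay rate of $\varphi(2^{-j})$ and $2^{-js}$. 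A careful but routine argument, comparing the series in \eqref{cond1} with the corresponding one for $\varphi_{u,p}$, should pin this down; alternatively one may argue directly with the subspace $\mathcal{N}^s_{\varphi_{u,p},p,q}(\rd)\subset\MB(\rd)$ obtained from Theorem~\ref{main} and the embedding \eqref{ex-last-1}-type arguments, and then use \eqref{egv-N} together with the fact (from \eqref{N-Lloc} and the analysis in Step~3 of Theorem~\ref{emb-L1loc}) that $\varphi_{u,p}$ only changes the local behaviour.
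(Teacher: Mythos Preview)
Your reduction strategy has a genuine gap. You claim that from $\lim_{t\to\infty}\varphi(t)t^{-\nd/p}=0$ one can find $u>p$ with $\varphi(t)\le c\,t^{\nd/u}$ for all $t>0$, arguing that for $0<t\le 1$ the bound $\varphi(t)\le 1$ suffices after ``enlarging the constant''. But $t^{\nd/u}\to 0$ as $t\to 0^+$, so no constant can make $1\le c\,t^{\nd/u}$ hold near the origin. Concretely, take $\varphi(t)=\max(1,t^{\nd/v})$ with $p<v<\infty$; then $\varphi\in\Gp$, $\varphi(1)=1$, and $\lim_{t\to\infty}\varphi(t)t^{-\nd/p}=0$, yet $\varphi(t)=1$ for $0<t\le 1$, so $\varphi(t)t^{-\nd/u}=t^{-\nd/u}\to\infty$ and no $u>p$ gives the pointwise bound. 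With $s=0$ and $1<q\le 2$ all hypotheses of the proposition are satisfied for this $\varphi$, so the case is not vacuous. The same obstruction blocks your fallback via Theorem~\ref{main}: the embedding $\mathcal{N}^s_{u,p,q}(\rd)\hookrightarrow\MB(\rd)$ (same $s,p,q$) requires $\alpha_j=\sup_{\nu\le j}\varphi(2^{-\nu})2^{\nu\nd/u}$ to be bounded, which again fails for this $\varphi$. In short, assumption~\eqref{case-A} constrains $\varphi$ only at infinity, while the embedding into $\MB(\rd)$ you need is governed by $\varphi$ near zero as well.

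The paper does not attempt such a reduction. Instead it constructs a single function $f\in\MB(\rd)$ with $\|f|\MB(\rd)\|\le c$ and $f^\ast\equiv\infty$, adapting the atomic construction from \cite{hms16}. The assumption~\eqref{case-A} is used to place infinitely many translated copies of an atomic tower at points $(2^{n_\ell},0,\dots,0)$, where $(n_\ell)$ is chosen so that $\varphi(2^{n_\ell})2^{-n_\ell\nd/p}\le\ell^{-1/p}$; this guarantees that a cube of scale $2^{n_k}$ sees at most $k$ towers, and the Morrey norm stays bounded. Condition~\eqref{cond1} enters via (a variant of) Landau's duality: one picks $v\in\ell_q$ with $\sum_j v_j 2^{-js}\varphi(2^{-j})^{-1}=\infty$ and sets $\lambda_j=v_j 2^{-js}\varphi(2^{-j})^{-1}$, so that each tower has infinite $L_\infty$-height while the sequence norm in $n^s_{\varphi,p,q}$ is controlled by $\|v|\ell_q\|$. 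Thus the two hypotheses play distinct and essential roles in a direct construction, rather than being converted into an embedding.
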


 \begin{proof}
The proof is an adaptation of the proof of Theorem 4.11 in \cite{hms16}.  In view of \eqref{case-A} we can construct  a strictly increasing  sequence $(n_\ell)_{\ell\in\nat}$, $n_{\ell}\in \mathbb{N}_0$, such that 
\begin{equation}\label{2.06-2}
	\varphi(2^{n_{\ell}})2^{-\frac{\nd n_{\ell}}{p}} \le {\ell^{-\frac{1}{p}}}, 
	%\le \ell^{-\frac{q}{p}}, 
	\quad \ell\in  \mathbb{N}, 
\end{equation}
where we can take  $n_1=0$. 
Using  the same type of construction as in \cite{hms16}, for each $\ell\in \nat_0$ consider a shrinking sequence of cubes $Q_{0,m_{0,\ell}}\supsetneq
Q_{1,m_{1,\ell}}\supsetneq Q_{2,m_{2,\ell}}\supsetneq \ldots$ for an appropriately chosen sequence $\{m_{j,\ell}\}_{j\in\no}\subset \zd$ now  with $m_{0,\ell}:=(2^{n_\ell},0,0,\cdots,0)$ and $x_0^{\ell} \in \bigcap_j Q_{j,m_{j,\ell}}$. 
{Let $\phi$ be a $C^{\infty}$  function satisfying % \eqref{phi-1} and \eqref{phi-2} 
\begin{equation*}\label{phi-1}
\phi(x)\geq C_1 \quad \text{if} \quad |x|\leq C_2, \quad \phi(x)=0 \quad  \text{if}  \quad |x|>1,
\end{equation*}
and
\begin{equation*}\label{phi-2}
 \int_{\rd} x^{\gamma} \phi(x) \dint x=0 \quad \text{whenever} \quad |\gamma| \leq L \quad \text{with} \quad \gamma\in \no^\nd,
\end{equation*}
for some  constants $C_1>0$ and $C_2\in (0,1)$, and for fixed  $L\in \nat_0$  with $L\geq  \max(-1,\whole{\sigma_p-s})$.
%For the construction of such type of functions we refer e.g. to the proof of Theorem~4.9  of \cite{hms16} or \cite[Lemma~4.6]{CF06}.
 } 
%$\psi$ be the function defined in \eqref{Psi}.   
Then, for any $j,\ell\in \no$, the function
$$
a_{j, m_{j,\ell}}(x):= {\phi}\bigl(2^{j+2}(x-x_0^{\ell})\bigr), \quad x\in\rd,
$$
is  supported in $Q_{j,m_{j,\ell}}$, and gives rise to a family of atoms for  $\MB(\rd)$. 
%%%%%%%%%%%%%%
It follows from the condition \eqref{cond1} that 
\begin{equation}\label{2.06-1}
	\sum_{j=0}^\infty 2^{-jsq'} \varphi(2^{-j})^{-q'}
	%=\sup\Big\{ \sum_{j=0}^\infty v_j2^{-js}\varphi(2^{-j})^{-1}:\quad (v_j)\in \ell_q,\; v_j\ge 0,\quad\|(v_j)|\ell_q\|\le 1 \Big\}
	= \infty 
\end{equation}
(with the usual modification if $q'=\infty$). We use an adapted version of an inequality of Landau, see \cite[Thm.~161, p.~120]{hardy}. 
If $1<q<\infty$, then it follows from the  Landau theorem, that remains valid  in the quasi-Banach setting, cf. \cite{CH15}, that {there exists} a sequence $v=(v_j)_{j\in\nat_0}\in \ell_q$ such that 
\begin{equation}
	\sum_{j=0}^\infty v_j2^{-js}\varphi(2^{-j})^{-1}	= \infty . 
\end{equation}
We may assume that $\|v|\ell_q\|=1$  and that $v_{j}\ge 0$, since $2^{-js}\varphi(2^{-j})^{-1}>0$.    
If $q=\infty$, then the same identity holds for $v=(1,1,1,\ldots)$.   
 We define a sequence $(\lambda_{j})_{j\in\nat_0}$ by 
 \begin{equation*}
 	\lambda_j= v_j2^{-js}\varphi(2^{-j})^{-1}, \quad j\in \no,
 \end{equation*}
and  a distribution $f\in\mathcal{S}'(\rd)$ by its atomic
decomposition, 
$$ f:= \sum_{j=0}^\infty \sum_{\ell=0}^\infty \lambda_j a_{j,m_{j,\ell}} \qquad \text{i.e., with}\qquad
\lambda_{j, m}=\begin{cases} \lambda_j, & \text{if}\quad m=m_{j,\ell}, \ j,\ell\in\no,
	\\ 0, & \text{otherwise}.\end{cases} $$

If $|Q_{\nu,m}|\le 1$, then 
 \begin{align} \label{08.06-1}
	\varphi(2^{-\nu})\, 2^{(\nu-j){\frac{\nd}{p}}} \Big(\sum_{m:Q_{j,m}\subset Q_{\nu,k}}\!\!|\lambda_{j,m}|^p\Big)^{\frac 1 p}   \le \varphi(2^{-\nu})\, 2^{(\nu-j){\frac{\nd}{p}}} 
	2^{-js}\varphi(2^{-j})^{-1} v_j \le 2^{-js} v_j %\nonumber 
\end{align}
since $\varphi\in  \mathcal{G}_p$. 
If the volume of the cube $Q_{\nu,m}$  is bigger than $1$, then it can contain several cubes $Q_{0,m_{0,\ell}}$. 
Let us denote by $\mu_\nu$ the number of such cubes. Then $-\nu\ge n_{\mu_\nu}$.
%%%%%%%%%%%%%%%%%%%
This follows from the definition of the points   $m_{0,\ell}$. Hence,
  \begin{align} 
 \varphi(2^{-\nu})\, 2^{(\nu-j){\frac{\nd}{p}}} \Big(\sum_{m:Q_{j,m}\subset Q_{\nu,k}}\!\!|\lambda_{j,m}|^p\Big)^{\frac 1 p}  & \le \varphi(2^{-\nu})\, 2^{(\nu-j){\frac{\nd}{p}}} 
  2^{-js}\varphi(2^{-j})^{-1} \Big(\sum_{m:Q_{j,m}\subset Q_{0,m_{0,\ell}}\subset  Q_{\nu,k}}\!\!|v_{j}|^p\Big)^{\frac 1 p} \nonumber \\
     & \le \varphi(2^{{n_{\mu_\nu}}})\, 2^{{-}  \frac{\nd}{p}n_{\mu_\nu}} \mu_\nu^{1/p} 2^{-j\frac{\nd}{p}}
   \varphi(2^{-j})^{-1}  2^{-js} v_j   
     \le  2^{-js} v_j  \label{2.06-4} 
  \end{align}
where we used \eqref{2.06-2}  and recall that $\varphi\in \mathcal{G}_p$ and $\varphi(1)=1$. So we arrive at 
\begin{align*}
	\sup_{\nu: \nu \le j; k\in \zd}\! \varphi(2^{-\nu})\, 2^{(\nu-j){\frac{\nd}{p}}}
	\Big(\sum_{m:Q_{j,m}\subset Q_{\nu,k}}\!\!|\lambda_{j,m}|^p\Big)^{\frac 1 p}
	\leq 2^{-jsq} v_j,
\end{align*}   
cf. \eqref{08.06-1} and \eqref{2.06-4}. So 
$$\|f|\MB(\rd)\|\le c \big(\sum_{j=0}^\infty v_j^q\big)^\frac{1}{q}=c$$ 
{with the usual modification if $q=\infty$.}

The decreasing rearrangement of $f$ can be calculated as in the proof of \cite[Thm.~4.11]{hms16}, leading to $f^*(t)=\infty$ for any $t>0$.
   \end{proof}

 So we are left to deal with the case $\ \lim_{t\rightarrow \infty} \varphi(t) t^{-\frac{\nd}{p}}>0$ in the sequel. Thus Corollary~\ref{m_b_emb}(i) with $p_1=p_2$, $q_1=q_2$ implies
 \begin{equation}\label{MB-in-B}
\MB(\rd) \hookrightarrow \B(\rd).
 \end{equation}
 Now we state the counterpart of \eqref{egv-B-subcritical}, \eqref{egv-B-critical}.

 \begin{proposition}\label{P-case-2}
   	Let $0<p<\infty$, $0<q\leq \infty$,  $\varphi\in \Gp$ with $\varphi(1)=1$, and $s\in \rr$ so that {$\MB(\rd) \subset \Lloc(\rd)$}. %$s>\sigma_p$. 
	Assume    \eqref{cond1} to hold. If
\begin{equation}
  \lim_{t\rightarrow \infty} \varphi(t) t^{-\frac{\nd}{p}}>0\quad  \text{and}\quad \lim_{t\rightarrow 0^+} \varphi(t) t^{-\frac{\nd}{p}}<\infty,\label{case-B}
\end{equation}
then for $\nd  \left(\frac1p-1\right)_+ \leq s<\frac{\nd}{p}$, 
\begin{equation}\label{egv-N-subcritical}
 \egv{\MB(\rd)}(t) \sim  t^{-\frac1p+\frac  s \nd}, \quad 0<t<1,
 \end{equation}
and for $s=\frac{\nd}{p}$ and $q>1$,
 \begin{equation}\label{egv-N-critical}
 \egv{\MB(\rd)}(t) \sim  |\log t|^{\frac{1}{q'}}, \quad 0<t<\frac{1}{2}.
 \end{equation}
 \end{proposition}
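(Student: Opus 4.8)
The plan is to observe that hypothesis~\eqref{case-B} is in fact very rigid. Since $\varphi\in\Gp$, the function $t\mapsto\varphi(t)t^{-\nd/p}$ is nonincreasing, so $\lim_{t\to0^+}\varphi(t)t^{-\nd/p}=\sup_{t>0}\varphi(t)t^{-\nd/p}$ and $\lim_{t\to\infty}\varphi(t)t^{-\nd/p}=\inf_{t>0}\varphi(t)t^{-\nd/p}$. Hence \eqref{case-B} is equivalent to $0<\inf_{t>0}\varphi(t)t^{-\nd/p}\le\sup_{t>0}\varphi(t)t^{-\nd/p}<\infty$, i.e. to $\varphi(t)\sim t^{\nd/p}$ for \emph{all} $t>0$ (with constants $\lim_{t\to\infty}\varphi(t)t^{-\nd/p}$ and $\lim_{t\to0^+}\varphi(t)t^{-\nd/p}$; recall $\varphi(1)=1$). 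I would record this equivalence first.

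It then follows that $\varphi(\ell(Q))\sim|Q|^{1/p}$ uniformly in all cubes $Q$, so the quasi-norm \eqref{Morrey-norm} is equivalent to $\sup_Q\big(\int_Q|f|^p\big)^{1/p}=\|f\mid L_p(\rd)\|$; thus $\M(\rd)=L_p(\rd)=\mathcal{M}_{p,p}(\rd)$ with equivalent quasi-norms (alternatively, apply \eqref{emb-Mphi} in both directions with $\varphi_1=\varphi$, $\varphi_2(t)=t^{\nd/p}$). Feeding this into Definition~\ref{def-spaces} level by level --- equivalently, invoking Theorem~\ref{main} in both directions --- yields $\MB(\rd)=\mathcal{N}^s_{p,p,q}(\rd)=\B(\rd)$ with equivalent quasi-norms, recall \eqref{Nphi=Nu}, \eqref{N=B}. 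In particular $\egv{\MB(\rd)}(t)\sim\egv{\B(\rd)}(t)$ for all $t>0$, so the statement is reduced to the already-known behaviour of $\egv{\B(\rd)}$.

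It remains to match hypotheses. The standing assumption $\MB(\rd)\subset\Lloc(\rd)$ now reads $\B(\rd)\subset\Lloc(\rd)$, which by \eqref{N-Lloc} with $u=p$ means $s>\nd(\frac1p-1)_+$, or $s=\nd(\frac1p-1)_+$ and $0<q\le\min(\max(p,1),2)$; together with the range $\nd(\frac1p-1)_+\le s<\frac{\nd}{p}$ this is precisely the hypothesis under which \eqref{egv-B-subcritical} holds, so \eqref{egv-N-subcritical} follows. For the critical case note that $\varphi(2^{-j})\sim2^{-j\nd/p}$ gives $2^{-js}\varphi(2^{-j})^{-1}\sim2^{j(\nd/p-s)}$: for $s<\frac{\nd}{p}$ this diverges, so \eqref{cond1} is automatic, while for $s=\frac{\nd}{p}$ it is the constant sequence $1$, so \eqref{cond1} holds if and only if $q>1$ --- exactly the hypothesis of the second case. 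There \eqref{egv-B-critical} applies and yields \eqref{egv-N-critical}; this also shows that \eqref{cond1} and \eqref{case-B} are mutually consistent.

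I expect no real obstacle here beyond this bookkeeping: \eqref{case-B} singles out the degenerate sub-case in which $\MB(\rd)$ collapses to a classical Besov space, whereas the genuinely new phenomena --- occurring when $\varphi(t)t^{-\nd/p}$ is unbounded near $0$ --- are deferred to Propositions~\ref{P-case-3} and~\ref{P-case-4}. The only points needing a little care are that $\varphi\sim t^{\nd/p}$ (rather than equality) still gives equivalence of quasi-norms, which is immediate from the definitions, and that the side condition on $q$ at the endpoint $s=\nd(\frac1p-1)_+$ must be transported correctly from \eqref{N-Lloc} to \eqref{egv-B-subcritical}.
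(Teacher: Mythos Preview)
Your proposal is correct and follows essentially the same route as the paper's own proof: both observe that \eqref{case-B}, via the monotonicity of $t\mapsto\varphi(t)t^{-\nd/p}$ built into $\Gp$, forces $\varphi(t)\sim t^{\nd/p}$ on all of $(0,\infty)$, so that $\MB(\rd)=\B(\rd)$ and the known Besov results \eqref{egv-B-subcritical}, \eqref{egv-B-critical} apply directly. Your version is simply more explicit about the bookkeeping (matching \eqref{cond1} and the $\Lloc$ condition to the Besov hypotheses), which the paper leaves implicit.
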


 \begin{proof}
   Note that \eqref{case-B} implies  that
   $$ \varphi(t) \sim t^{\frac{\nd}{p}}, \quad t>0,
   $$
which by \eqref{Nphi=Nu} and \eqref{N=B} leads to $\MB(\rd) = \B(\rd)$. Thus \eqref{egv-B-subcritical}, \eqref{egv-B-critical} conclude the argument.
 \end{proof}

 Finally we are left to deal with the case when $\lim_{t\rightarrow \infty} \varphi(t) t^{-\frac{\nd}{p}}>0$ and $  \lim_{t\rightarrow 0^+} \varphi(t) t^{-\frac{\nd}{p}}=\infty$, and we shall split this case again in the settings when $s=0$ and $s>0$.

 \begin{proposition}\label{P-case-3}
   	Let $0<p<\infty$, $0<q\leq \infty$,  $\varphi\in \Gp$ with $\varphi(1)=1$, and $s=0$ so that {$\mathcal{N}^0_{\varphi,p,q}(\rd) \subset \Lloc(\rd)$}. %$s>\sigma_p$. 
	Assume    \eqref{cond1} to hold, that is, $\{\varphi(2^{-j})^{-1}\}_{j\in\no}\not\in\ell_{q'}$. Let 
\begin{equation}
  \lim_{t\rightarrow \infty} \varphi(t) t^{-\frac{\nd}{p}}>0\quad  \text{and}\quad \lim_{t\rightarrow 0^+} \varphi(t) t^{-\frac{\nd}{p}}=\infty.\label{case-C}
\end{equation}
\begin{enumerate}[{\bfseries\upshape(i)}]
\item
  If $\ds \lim_{t\to 0^+} \varphi(t)=0$, then for $1\le p<\infty$,  and $0<q\le \min\{p,2\}$,
\begin{align}\label{18-11_3}
	\egv{\mathcal{N}^0_{\varphi,p,q}(\rd)}(t) \,\sim\,  t^{-\frac1p}, \quad t>0.
\end{align} 
\item
  If $\ds \lim_{t\to 0^+} \varphi(t)>0$, then for $1<q\le 2$, there are constants $c_2>c_1>0$ such that 
  \begin{equation}\label{Case-3b}
c_1\    |\log t|^{\frac{1}{q'}} \leq \egv{\mathcal{N}^0_{\varphi,p,q}(\rd)}(t) \leq c_2\ |\log t|^{\frac{2}{q'}},\quad 0<t<t_0,
  \end{equation}
where $t_0=t_0(p,q)$.
\end{enumerate}
 \end{proposition}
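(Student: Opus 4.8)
The plan is to prove (i) and (ii) by entirely different mechanisms, reflecting that in (i) the generalised Morrey part still behaves like $L_p$ globally, while in (ii) the assumption $\lim_{t\to0^+}\varphi(t)>0$ forces an $L_\infty$-type local behaviour and only a logarithmic envelope. First I record the structural consequences of \eqref{case-C}: since $\lim_{t\to\infty}\varphi(t)t^{-\nd/p}>0$, \eqref{Gp-tlarge} gives $\varphi(t)\sim t^{\nd/p}$ for $t\ge1$, hence $\mathcal{N}^0_{\varphi,p,q}(\rd)\hookrightarrow B^0_{p,q}(\rd)$ by \eqref{MB-in-B}; moreover $\{\varphi(2^{-j})\}_j\in\ell_\infty$ by \eqref{Gp-tsmall}, so \eqref{ex-last-3} yields $B^{\nd/p}_{p,q}(\rd)\hookrightarrow\mathcal{N}^0_{\varphi,p,q}(\rd)$. (The $q$-ranges in the statement are in fact forced: $\mathcal{N}^0_{\varphi,p,q}(\rd)\subset\Lloc(\rd)$ requires $q\le\min\{p,2\}$ when $\lim_{t\to0^+}\varphi(t)=0$ and $q\le2$ when $\lim_{t\to0^+}\varphi(t)>0$, by Theorem~\ref{emb-L1loc}, and in the latter case \eqref{cond1} additionally forces $q>1$.)

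For (i) the upper bound in \eqref{18-11_3} is immediate: $B^0_{p,q}(\rd)\hookrightarrow L_p(\rd)$ for $1\le p<\infty$, $0<q\le\min\{p,2\}$ (a classical embedding), so by Proposition~\ref{properties:EG}(i) and \eqref{Lp-global} (with $p=q$) we get $\egv{\mathcal{N}^0_{\varphi,p,q}(\rd)}(t)\lesssim t^{-1/p}$ for all $t>0$. For the lower bound I would, for fixed $t>0$, build an extremal function directly from the atomic decomposition of $\mathcal{N}^0_{\varphi,p,q}(\rd)$ (cf.\ \cite{NNS16,hms22}) by spreading the mass over many pairwise separated single-scale atoms — the analogue, in the smoothness setting, of the trick in Lemma~\ref{clever-lemma-LS}. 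Concretely, fix a $C^\infty$ function $\psi$ with $\supp\psi\subset Q(0,1)$ and $\int\psi\,\dint x=0$; put $a_{j,m}(x)=\psi(2^jx-m)$ (a standard atom supported in a fixed dilate of $Q_{j,m}$) and $f=\Lambda\sum_{m\in S}a_{j,m}$ for a set $S$ of $M$ indices with the cubes $Q_{j,m}$, $m\in S$, pairwise at distance $\ge2$. Using $\varphi(2^{-\nu})2^{(\nu-j)\nd/p}\le\varphi(2^{-j})$ for $\nu\le j$ on the ``fine'' dyadic cubes $Q_{\nu,k}$ (side $\le1$, meeting at most one support) and $\varphi(2^{-\nu})\sim2^{-\nu\nd/p}$ on the ``coarse'' ones (side $\ge1$, where the $\nd/p$-scaling absorbs the number of supports), one obtains $\|\{\lambda_{j,m}\}\mid n^0_{\varphi,p,q}\|\lesssim\Lambda\max\bigl(\varphi(2^{-j}),(M2^{-j\nd})^{1/p}\bigr)$. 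Choosing $M2^{-j\nd}\sim t$ and then $j$ so large that $\varphi(2^{-j})\le(M2^{-j\nd})^{1/p}$ — possible precisely because $\lim_{t\to0^+}\varphi(t)=0$ — the norm is $\lesssim\Lambda t^{1/p}$; taking $\Lambda\sim t^{-1/p}$ and normalising gives $f$ with $\|f\mid\mathcal{N}^0_{\varphi,p,q}(\rd)\|\le1$ and $f^*(t)\gtrsim t^{-1/p}$. Hence $\egv{\mathcal{N}^0_{\varphi,p,q}(\rd)}(t)\sim t^{-1/p}$, $t>0$.

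For (ii) the lower bound in \eqref{Case-3b} follows at once from $B^{\nd/p}_{p,q}(\rd)\hookrightarrow\mathcal{N}^0_{\varphi,p,q}(\rd)$, Proposition~\ref{properties:EG}(i) and the critical Besov result \eqref{egv-B-critical} (applicable since $1<q\le2$): $\egv{\mathcal{N}^0_{\varphi,p,q}(\rd)}(t)\gtrsim|\log t|^{1/q'}$ near $0$. For the upper bound I would use that $\lim_{t\to0^+}\varphi(t)>0$ makes $\{\varphi(2^{-j})^{-1}\}_j$ bounded, so by Corollary~\ref{m_b_emb}(ii) $\mathcal{N}^0_{\varphi,p,q}(\rd)\hookrightarrow B^0_{\infty,q}(\rd)$; it then suffices, via Proposition~\ref{properties:EG}(i), to bound $\egv{B^0_{\infty,q}(\rd)}(t)$ from above, and a non-optimal estimate of the block decomposition $f=\sum_jf_j$ — $f^*(t)\lesssim\sum_{j\le J}\|f_j\mid L_\infty\|+(\text{high-frequency tail})$ with $J\sim|\log t|$, the tail being controlled by $B^\varepsilon_{\infty,q}\hookrightarrow L_\infty$ and each block estimated suboptimally — yields $\egv{B^0_{\infty,q}(\rd)}(t)\lesssim|\log t|^{2/q'}$, $0<t<t_0$. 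This gives \eqref{Case-3b}.

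The main obstacle is twofold. In (i) it is the lower-bound construction: one must keep the Morrey-type sequence norm $\|\{\lambda_{j,m}\}\mid n^0_{\varphi,p,q}\|$ bounded — which forces the separation argument to control the supremum over all coarse dyadic cubes $Q_{\nu,k}$ simultaneously — while arranging $f^*(t)$ of the correct order; the decisive input is $\varphi(2^{-j})\to0$, which lets one refine the atoms arbitrarily without increasing the norm. In (ii) the genuine difficulty is that the growth envelope of $B^0_{\infty,q}(\rd)$ itself is not known sharply: the lower bound inherited from $B^{\nd/p}_{p,q}(\rd)$ and the upper bound from the block decomposition differ by a factor $|\log t|^{1/q'}$, which is exactly the gap in \eqref{Case-3b}; closing it would require a finer analysis of $B^0_{\infty,q}(\rd)$ (or of $\mathcal{N}^0_{\varphi,p,q}(\rd)$ directly), which we do not have.
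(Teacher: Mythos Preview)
Your treatment of part (i) and the lower bound in (ii) is correct and matches the paper: the paper's extremal functions $f_j=\varphi(2^{-j})^{-1}\sum_{k=1}^{k_j}\psi_{j,m_k}$ with $k_j=\lfloor 2^{j\nd}\varphi(2^{-j})^p\rfloor$ and $m_k=(4^k,0,\dots,0)$ are a specific instance of your single-scale atom construction, and the lower bound in (ii) comes from precisely the embedding $B^{\nd/p}_{p,q}(\rd)\hookrightarrow\mathcal{N}^0_{\varphi,p,q}(\rd)$ you name.

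The upper bound in (ii), however, has a genuine gap. You rely solely on $\mathcal{N}^0_{\varphi,p,q}(\rd)\hookrightarrow B^0_{\infty,q}(\rd)$ and assert $\egv{B^0_{\infty,q}(\rd)}(t)\lesssim|\log t|^{2/q'}$, but this is almost certainly false on $\rd$ (the paper itself remarks that $\egv{B^0_{\infty,q}(\rd)}$ is presumably infinite for every $t>0$). For $q>1$, take any compactly supported $h\in B^0_{\infty,q}(\rd)$ with $|h(0)|=N$ and $\|h\mid B^0_{\infty,q}\|\le1$ --- possible since $B^0_{\infty,q}\not\hookrightarrow L_\infty$ --- and set $f=\sum_k h(\cdot-x_k)$ for well-separated $x_k$; then $\|f\mid B^0_{\infty,q}\|\sim1$ while $f^*(t)\gtrsim N$ for every $t>0$. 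Your block-decomposition sketch cannot repair this: for $q>1$ the tail $\sum_{j>J}f_j$ is controlled neither in $L_\infty$ nor in any $L_r$ with $r<\infty$ (there is simply no integrability in $B^0_{\infty,q}$), so nothing links $J$ to $t$. The Seeger--Trebels estimate you may have in mind holds only on bounded domains.

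The paper's remedy is to use \emph{both} embeddings at once: under $\lim_{t\to0^+}\varphi(t)>0$ one actually has the identification $\mathcal{N}^0_{\varphi,p,q}(\rd)=B^0_{\infty,q}(\rd)\cap B^0_{p,q}(\rd)$, whence the H\"older-type interpolation
\[
\|f\mid B^0_{r,q}\|\le\|f\mid B^0_{p,q}\|^{p/r}\,\|f\mid B^0_{\infty,q}\|^{1-p/r}\le\|f\mid\mathcal{N}^0_{\varphi,p,q}\|
\]
gives control in $B^0_{r,q}$ for every $r>\max(p,2)$. Combined with $\|f\mid L_r\|\le C\,r^{2/q'}\|f\mid B^0_{r,q}\|$ (the explicit $r$-dependence coming from the Littlewood--Paley constant, interpolated between $q=1$ and $q=2$), one gets $\egv{\mathcal{N}^0_{\varphi,p,q}(\rd)}(t)\le C\,r^{2/q'}t^{-1/r}$ for all such $r$, and optimising $r\sim|\log t|$ yields $|\log t|^{2/q'}$. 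The $B^0_{p,q}$-component is indispensable here; it supplies exactly the global integrability that blocks the translation construction above and makes the envelope finite.
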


 \begin{proof}
   {\em Step 1}.~   Let us first briefly discuss the conditions \eqref{cond1} and $\MB(\rd)\subset \Lloc(\rd)$ in case of $s=0$. In case (i), when $\ds \lim_{t\to 0^+} \varphi(t)=0$, then Theorem~\ref{emb-L1loc}(ii) gives that for $p\geq 1$,  $\mathcal{N}^0_{\varphi,p,q}(\rd) \subset \Lloc(\rd)$ if, and only if, $0<q\leq \min\{p,2\}$, while in case of $0<p<1$ condition \eqref{suff-cond-c} with $s=0$ is never satisfied. So we restrict ourselves to the case $p\geq 1$, $0<q\leq \min\{p,2\}$ in (i). Concerning (ii), Theorem~\ref{emb-L1loc}(i) tells us that we only need to consider $0<q\leq 2$. Moreover, \eqref{cond1} requires $q'<\infty$, that is, $q>1$. So in (ii) we are left to consider $1<q\leq 2$.\\

   {\em Step 2}.~ We deal with (i) and assume $p\geq 1$, $0<q\leq \min\{p,2\}$. Concerning the upper estimate in \eqref{18-11_3} we benefit from \eqref{MB-in-B} (with $s=0$)  and the corresponding result in \eqref{egv-B-subcritical}, to conclude that
   \begin{equation*}
   \egv{\mathcal{N}^0_{\varphi,p,q}(\rd)}(t) \leq \ c_1 \ \egv{B^0_{p,q}(\rd)}(t)\ \leq c_2\ t^{-\frac1p}, \quad t>0.
   \end{equation*}
To prove the estimate from below we adopt the method used  in the proof  of Theorem 3.1 in  \cite{hs13}, cf. also  the proof of Theorem 3.2  in \cite{hs12}. 
% \marginpar{\red{Instead of this $m_k$ should it not be something %like $m_{k,j}=(2^{j+k},0\ldots,0)$ ?  }}
For  $0<j$ we put 
$$k_j = \whole{2^{\nd j}\varphi(2^{-j})^p},$$
where  $\whole{x}=\max\{l\in \mathbb{Z}: l \leq x\}$. Then $1\le k_j < 2^{\nd j}$ and $k_j\rightarrow \infty$ if $j\rightarrow \infty $ since $2^{\nd j/p}\varphi(2^{-j})\rightarrow \infty $, recall assumption \eqref{case-C}.  
For any  $j>0$  we define a finite sequence $\lambda_{j,m_k}=\varphi(2^{-j})^{-1}$, with  $m_k=(4^k,0,\ldots, 0)$, $k=1,\ldots, k_j$.  
Let   $\psi$ be the compactly supported $C^{\infty}$ function on
$\rd$ defined by
\begin{equation*}
	\psi(x)=\widetilde{\psi}(2x-(1,\ldots,1)) - \widetilde{\psi}(2x+(1,\ldots,1)),
\end{equation*}
where 
\begin{equation} \label{Psi}
	\widetilde{\psi}(x)=e^{-1/(1-|x|^2)} \quad \text{if} \quad |x|<1\quad
	\text{and} \quad \widetilde{\psi}(x)=0 \quad \text{if} \quad |x|\geq 1.
\end{equation}
The function $\psi$ is supported in the unit ball and satisfies the first moment conditions. So any function $\psi_{j}(x)=\psi(2^jx)$ is an atom satisfying the first moment conditions. 
We consider the functions 
\begin{equation}\nonumber
	f_j(x) =  %2^{\frac{nj}{u}}
	\sum_{k=1}^{k_j} \lambda_{j,m_k}\psi_{j,m_k}(x)  .
\end{equation}
where $\psi_{j,m_k}(x)$ is a suitable translation of $\psi_{j}$ such that the  atom $\psi_{j,m_k}$ is centred at $Q_{j,m_k}$. The functions $f_j$, $j\in\nat$, belong to $\mathcal{N}^0_{\varphi,p, q}(\rd)$ and their norms are uniformly bounded,  $\|f_j|\mathcal{N}^0_{\varphi,p, q}(\rd)\|  \le c$, since 
\begin{align*}
	\mathop{\sup_{\nu: 0\le\nu \leq j}}_{k\in \zd}\!  \varphi(2^{-\nu}) \,2^{(\nu-j)\frac{\nd}{p}}\Big(\!\! \mathop{\sum_{m\in\zd:}}_{Q_{j,m}\subset Q_{\nu,k}}\!\!|\lambda_{j,m}|^p\Big)^{\frac 1 p}= 1  
	\intertext{and}
	2^{-j\frac{\nd}{p}} 
	\Big(\!\! \mathop{\sum_{m\in\zd}}
	\!\!|\lambda_{j,m}|^p\Big)^{\frac 1 p} = 
	2^{-j\frac{\nd}{p}}  k_j^{\frac{1}{p}} \varphi(2^{-j})^{-1} \le 1  .   \nonumber
\end{align*} 
Each function $f_j$, $j\in\nat$, takes the value ${c_1}\varphi(2^{-j})^{-1}$, $c_1\in (0,1)$, on a set of measure ${c_2}2^{-j\nd}k_j\sim \varphi(2^{-j})^p$. In consequence, 
\begin{equation}\label{fj2}
	f_j^*(c \varphi(2^{-j})^p)\ge C \varphi(2^{-j})^{-1}, \qquad j=0,1,2,\ldots,
\end{equation}
and  the constants {$c$,} $C$  used in the last inequality {are} independent of $j\in\nat$. The definition  of the class $\Gp$ and our assumption in (i) imply that $\varphi(2^{-j})\rightarrow 0$ if $j\rightarrow \infty$  and 
$\varphi(2^{-j-1})\le \varphi(2^{-j}) \le 2^{\nd/p} \varphi(2^{-j-1})$.    
So, now  we arrive at the  desired estimate from below  
\[
C t^{-\frac1p} \le  \egv{\mathcal{N}^0_{\varphi,p, q}}(t) , \qquad 0<t<1,
\]
if $1\le p< \infty$ and $0< q\le \min\{p,2\}$. \\
{\em Step 3}.~ We finally consider (ii) and assume $1<q\leq 2$ now. The main clue will be to prove that now
\begin{equation}\label{rem-leszek}
\mathcal{N}^0_{\varphi,p,q}(\rd) = B^0_{\infty,q}(\rd) \cap B^0_{p,q}(\rd) \subset \Lloc(\rd).
\end{equation}
%%%%%%%%%%%%%%%%%%%%%%%%%%%%%%%%%
%%%%%%%%%%%%%%%%%%%%%%%%%%%%%%%%%%%%%%%%%%%%%%%%%
If this is verified, the lower estimates  follows immediately: since $B^{\frac{\nd}{p}}_{p,q}(\rd) \hookrightarrow B^0_{\infty,q}(\rd) \cap B^0_{p,q}(\rd) = \mathcal{N}^0_{\varphi,p,q}(\rd)$, it is granted by \eqref{egv-B-critical}. To prove the upper estimate we use norms in Besov spaces $B^s_{p,q}(\rd)$ given by a smooth dyadic  resolution of unity $\{\eta_j \}_{j=0}^\infty$, 
\begin{align*}
	0\le \eta_0\le 1, \quad \eta_0(x) = 
	\begin{cases}
		1\quad & \text{if}\; |x|\le 1 \\
	    0\quad & \text{if}\;  |x|\ge \frac{3}{2}  
	\end{cases} \,\text{and}\qquad 
	    \eta_j(x)= \eta_0(2^{-j}x) - \eta_0(2^{-j+1}x). 
\end{align*}
If $r > \max(p,2)$ and $\theta= \frac{p}{r}$ then
\begin{equation}\label{log1}
	\|f| B^0_{r,q}(\rd)\| \le  \|f| B^0_{p,q}(\rd)\|^\theta \, \|f| B^0_{\infty,q}(\rd)\|^{1-\theta} \le \|f| \mathcal{N}^0_{\varphi,p,q}(\rd)\|. 
\end{equation}
One can easily see that 
\begin{align}
	\|f| L_{r}(\rd)\| &\le  \|f| B^0_{r,1}(\rd)\| \label{log2}
	\intertext{and}
	\|f| L_{r}(\rd)\| &\le  c_r\|f| F^0_{r,2}(\rd)\| \le c_r \|f| B^0_{r,2}(\rd)\|, \label{log3}
\end{align}
for some constant $c_r>0$ depending on $r$. Moreover it follows from properties of Banach-valued singular integral operators that $c_r= c\, r$ where $c>0$ is a constant depending of $d$ and the family  $\{\eta_j\}$ but independent  of $r$, cf. \cite{Graf} Theorem 4.6.1 and Theorem 5.1.2. Interpolating between \eqref{log2} and \eqref{log3} we get 
\begin{equation}
	\|f| L_{r}(\rd)\| \le C  r^{\frac{2}{q'}}\|f| B^0_{r,q}(\rd)\| , \qquad 1<q<2 \label{log4}
\end{equation}
As a consequence of \eqref{log1} and \eqref{log4} we get 
\[
\egv{\mathcal{N}^0_{\varphi,p,q}(\rd)}(t) \leq \ c_r \ \egv{B^0_{r,q}(\rd)}(t)\ \leq c r^{\frac{2}{q'}} \ t^{-\frac1r},
\]
%%%%%%%%%%%%%%%%%%%%%%%%%%%%%%%

 Now let $0<t<\frac12$ be fixed and choose $r_0 = r(t)$ in an optimal way, that is, such that $r_0^{2/q'} t^{-1/r_0}$ becomes minimal. Straightforward calculation leads to $r_0 = q' |\log t|/2$, such that $r_0^{2/q'} t^{-1/r_0} \sim |\log t|^{2/q'}$ is the minimum, taken in $r_0\geq \max({2},p)$ if $t< t_0=e^{-2p/q'}$. Hence we arrive at
   \[
    \egv{\mathcal{N}^0_{\varphi,p,q}(\rd)}(t) \leq \ c_2 \  |\log t|^{2/q'}, \quad 0<t<t_0,
   \]
   as desired.

It remains to verify \eqref{rem-leszek}. Note that \eqref{case-C} implies that   $2^{\nu\frac{\nd}{p}}\varphi(2^{-\nu}) \sim 1$ if $\nu\le 0$, so
	\begin{align}\label{22.06-2}
		&	\mathop{\sup_{\nu: \nu \leq j}}_{k\in \zd}\!  \varphi(2^{-\nu}) \,2^{(\nu-j)\frac{\nd}{p} }\Big(\!\! \mathop{\sum_{m\in\zd:}}_{Q_{j,m}\subset Q_{\nu,k}}\!\!|\lambda_{j,m}|^p\Big)^{\frac 1 p}  \sim \\
		&	\max\, \bigg\{	\mathop{\sup_{\nu: 0\le\nu \leq j}}_{k\in \zd}\!  \varphi(2^{-\nu}) \,2^{(\nu-j)\frac{\nd}{p}}\Big(\!\! \mathop{\sum_{m\in\zd:}}_{Q_{j,m}\subset Q_{\nu,k}}\!\!|\lambda_{j,m}|^p\Big)^{\frac 1 p},
		2^{-j\frac{\nd}{p}} 
		\Big(\!\! \mathop{\sum_{m\in\zd}}
		\!\!|\lambda_{j,m}|^p\Big)^{\frac 1 p}\bigg\} . 
		\nonumber
	\end{align}
Hence, by our assumption in (ii), $\varphi(2^{-\nu}) \sim 1$, $0\leq \nu\leq j$, thus  
     \begin{align}
     \|\lambda | n^0_{\varphi,p,q}\| & \sim \left(\sum_{j=0}^\infty \max\, \bigg\{	\mathop{\sup_{\nu: 0\le\nu \leq j}}_{k\in \zd}\!  \varphi(2^{-\nu}) \,2^{(\nu-j)\frac{\nd}{p}}\Big(\!\! \mathop{\sum_{m\in\zd:}}_{Q_{j,m}\subset Q_{\nu,k}}\!\!|\lambda_{j,m}|^p\Big)^{\frac 1 p},
     	2^{-j\frac{\nd}{p}} 
     	\Big(\!\! \mathop{\sum_{m\in\zd}}
     	\!\!|\lambda_{j,m}|^p\Big)^{\frac 1 p}\bigg\}^q \right)^{1/q} \notag\\
        & \sim \left(\sum_{j=0}^\infty \max\, \bigg\{	\mathop{\sup_{\nu: 0\le\nu \leq j}}_{k\in \zd}\!  2^{(\nu-j)\frac{\nd}{p}}\Big(\!\! \mathop{\sum_{m\in\zd:}}_{Q_{j,m}\subset Q_{\nu,k}}\!\!|\lambda_{j,m}|^p\Big)^{\frac 1 p},
     	2^{-j\frac{\nd}{p}} 
     	\Big(\!\! \mathop{\sum_{m\in\zd}}
     	\!\!|\lambda_{j,m}|^p\Big)^{\frac 1 p}\bigg\}^q \right)^{1/q} \notag\\
        & \sim \left(\sum_{j=0}^\infty \max\, \bigg\{ \sup_{m\in\zd} |\lambda_ {j,m}|^q,    	2^{-j\frac{\nd}{p}q} 
     	\Big(\!\! \mathop{\sum_{m\in\zd}}
     	\!\!|\lambda_{j,m}|^p\Big)^{\frac q p}\bigg\} \right)^{1/q} \notag\\
& \sim \max\{ \|\lambda | b^0_{\infty,q}\|, \|\lambda | b^0_{p,q}\|\} \sim \|\lambda | b^0_{\infty,q} \cap b^0_{p,q}\|.
        \label{2010}        
     \end{align} 
     This leads to  \eqref{rem-leszek}.
 \end{proof}

 \begin{remark}
   Note that for $B^0_{\infty,q}(\rd)$ no growth envelope result is known, but presumably it is infinite in any $t>0$, while in case of a bounded domain (or the periodic case) Seeger and Trebels proved in \cite{SeeT} that $\egv{B^0_{\infty,q}(\Omega)}(t)\sim |\log t|^{1/q'}$. Here we have the intersection $B^0_{\infty,q}(\rd) \cap B^0_{p,q}(\rd)$ where apparently $B^0_{\infty,q}(\rd)$ determines the (local) growth envelope, while $B^0_{p,q}(\rd)$ ensures (global) finiteness. We have no precise result of the $\log$-exponent in \eqref{Case-3b} yet, but find the apparently logarithmic behaviour remarkable. It represents a new phenomenon not visible in the known cases like in Besov or Triebel-Lizorkin spaces yet. This is due to the assumed behaviour of $\varphi$ near $0$, that is, $\lim_{t\to 0^+} \varphi(t)>0$, which is plainly very much different from all `classical' example functions $\varphi$ considered before. It substantiates again the importance to study the more general function $\varphi\in\Gp$ here.
 \end{remark}

 Now we deal with the remaining case. Recall that for $\varphi\in\Gp$ we may always assume that $\varphi$ is a continuous strictly increasing function, cf. \cite[Prop. 23, p.~25]{FHS-MS-2}.  We denote the function inverse to $\varphi$  by $\varphi^{-1}$.

\begin{proposition}\label{P-case-4}
   	Let $0<p<\infty$, $0<q\leq \infty$,  $\varphi\in \Gp$ with $\varphi(1)=1$, and $s>0$ so that {$\MB(\rd) \subset \Lloc(\rd)$}. %$s>\sigma_p$. 
	Assume   conditions \eqref{cond1} and \eqref{case-C} to hold. Then $\lim_{t\to 0^+} \varphi(t)=0$.
\begin{enumerate}[{\bfseries\upshape(i)}]
\item
  If, additionally, 
$$
s> \sigma_p \quad  \text{or}  \quad s = \nd\left(\frac1p-1\right)>0, \quad 0<p<1, \quad \text{with} \quad 0<q\leq 2,
$$
then there exists $C>0$ such that
\begin{equation}\label{estimate-above}
  \egv{\MB(\rd)}(t)  
		 \le C\,   
	\begin{cases} t^{-\frac1p+\frac{s}{\nd}} , & \text{if} \;\; s<\frac{\nd}{p},   \; \text{or}\\[1ex]
|\log t|^{\frac{1}{q'}}  , & \text{if} \;\;  s=\frac{\nd}{p} \;\; \text{and} \;\;1<q\leq\infty. \end{cases}  
\end{equation}
\item
There exists some $c>0$ such that
  %are constants $c,C>0$ such that  
\begin{align}\label{31-10_1}
	\egv{\MB(\rd)}(t)  \geq  \,	c\, \varphi^{-1}\left(t^{\frac{1}{p}}\right)^s t^{-\frac{1}{p}},\quad 0<t<1. 
		\end{align}
\item
If, in addition, there  exist positive constants $\alpha>0$, $\beta>0$ and $\gamma>0$,  such that 
\begin{equation}\label{cond2a}
	0<\beta\le\liminf_{j\rightarrow \infty} \varphi(2^{-j}) 2^{j\alpha} \le  \limsup_{j\rightarrow \infty} \varphi(2^{-j}) 2^{j\alpha}\le \gamma<\infty,
\end{equation}
and $q$ satisfies 
\begin{equation}\label{cond3}
	\frac{1}{q}\ge \max\left\{\frac{1}{2},\frac{1-s\alpha^{-1}}{p}\right\},
\end{equation}
then {for $\alpha (1-p)_+< s<\alpha$},
 	\begin{align}\label{31-10_2}
 	\egv{\MB(\rd)}(t) \,\sim \,
 	 t^{-\frac{1}{p}} \, \varphi^{-1}\left(t^{\frac{1}{p}}\right)^s.    
 \end{align}  
\end{enumerate}
\end{proposition}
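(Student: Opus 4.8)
\emph{Overall approach.} I would split the argument along the three parts and lean throughout on the embedding $\MB(\rd)\hookrightarrow\B(\rd)$ (available by \eqref{MB-in-B}, since \eqref{case-C} forces $\varphi(t)\sim t^{\nd/p}$ for $t\ge1$), on the known growth envelopes \eqref{egv-B-subcritical}, \eqref{egv-B-critical}, \eqref{Lp-global}, and on the atomic/wavelet decomposition of $\MB(\rd)$ from \cite{hms22}. First I record the opening claim $\lim_{t\to0^+}\varphi(t)=0$: were this limit positive, then $\inf_{t>0}\varphi(t)>0$, whence $\{2^{-js}\varphi(2^{-j})^{-1}\}_j\sim\{2^{-js}\}_j\in\ell_{q'}$ because $s>0$, contradicting \eqref{cond1}. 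For part~(i): if $\sigma_p<s<\nd/p$, then $B^s_{p,q}(\rd)\subset\Lloc(\rd)$ with $\egv{B^s_{p,q}(\rd)}(t)\sim t^{-1/p+s/\nd}$, so $\MB(\rd)\hookrightarrow\B(\rd)$ and Proposition~\ref{properties:EG}(i) give the first line of \eqref{estimate-above}; the case $s=\nd/p$, $q>1$, is identical via \eqref{egv-B-critical}. In the borderline sub-case $s=\sigma_p=\nd(\tfrac1p-1)>0$, $0<p<1$, $q\le2$ (where $-\tfrac1p+\tfrac s\nd=-1$) the space $B^{\sigma_p}_{p,q}(\rd)$ need not consist of regular distributions, so I would chain $\MB(\rd)\hookrightarrow\B(\rd)$ with the sharp Sobolev embedding of $B^{\nd(1/p-1)}_{p,q}(\rd)$ into the Lorentz space $L_{1,q}(\rd)$ and use $\egv{L_{1,q}(\rd)}(t)\sim t^{-1}$.

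\emph{Part (ii).} For the lower bound I would mimic Step~2 of the proof of Proposition~\ref{P-case-3}(i) (cf.\ also \cite{hs13,hs12}). For $j\in\nat$ put $k_j=\whole{2^{\nd j}\varphi(2^{-j})^p}$ (which is $\ge1$ by \eqref{Gp-tsmall}), and consider
\[
  f_j=\lambda_j\sum_{\ell=1}^{k_j}\psi_{j,m_\ell},\qquad \lambda_j=2^{-js}\varphi(2^{-j})^{-1},
\]
where the $\psi_{j,m_\ell}$ are translates of one compactly supported, $L_\infty$-normalised atom at scale $2^{-j}$ (with sufficiently many vanishing moments), the centres $2^{-j}m_\ell$ forming a single far-off $\nd$-dimensional block with fixed mutual separation. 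Using $\varphi\in\Gp$ (so $\varphi(2^{-\nu})2^{(\nu-j)\nd/p}\le\varphi(2^{-j})$ for $\nu\le j$) together with $\varphi(t)\sim t^{\nd/p}$ for $t\ge1$, the choice of $k_j$ keeps the level-$j$ block of $\|\cdot\mid\n\|$ at $\lesssim\varphi(2^{-j})\lambda_j$, so $\|f_j\mid\MB(\rd)\|\lesssim1$; at the same time $f_j$ has modulus $\gtrsim\lambda_j$ on a set of measure $\sim k_j2^{-j\nd}\sim\varphi(2^{-j})^p$, hence $f_j^\ast\!\big(c\,\varphi(2^{-j})^p\big)\gtrsim2^{-js}\varphi(2^{-j})^{-1}$. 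Writing $t=\varphi(2^{-j})^p$, so that $2^{-j}=\varphi^{-1}(t^{1/p})$, this is precisely $\egv{\MB(\rd)}(t)\gtrsim\varphi^{-1}(t^{1/p})^s\,t^{-1/p}$ at the discrete values $t=\varphi(2^{-j})^p$; the doubling of $\varphi$ makes consecutive such values comparable and monotonicity of $\egv{\MB(\rd)}$ covers the intervals in between, yielding \eqref{31-10_1} for all $0<t<1$.

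\emph{Part (iii).} By \eqref{cond2a} we may take $\varphi(t)\sim t^\alpha$ for $0<t\le1$, so $\varphi^{-1}(y)\sim y^{1/\alpha}$ and \eqref{31-10_2} amounts to $\egv{\MB(\rd)}(t)\sim t^{s/(\alpha p)-1/p}$; its lower bound is part~(ii), so only the matching upper bound is new. I would take $f$ with $\|f\mid\MB(\rd)\|\le1$, expand it into its wavelet series $f=\sum_{j\ge0}f_j$ with $f_j=\sum_m\lambda_{j,m}\psi_{j,m}$, and combine two facts: the level-$j$ block with $\nu=j$ gives $\|f_j\mid L_\infty\|\lesssim\sup_m|\lambda_{j,m}|\lesssim2^{-js}\varphi(2^{-j})^{-1}\sim2^{j(\alpha-s)}$ (up to a summable weight), while $\MB(\rd)\hookrightarrow B^s_{p,q}(\rd)$ yields $\{2^{js}\|f_j\mid L_p\|\}_j\in\ell_q$. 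Splitting at the scale $J$ with $2^{J(\alpha-s)}\sim\sigma$, the piece $\sum_{j\le J}f_j$ is $\lesssim\sigma$ uniformly, whereas $g_J:=\sum_{j>J}f_j$ has $\|g_J\mid L_p\|\lesssim2^{-Js}$ (here $s>0$ is used), so by Chebyshev
\[
  \mu(f,C\sigma)\le\mu(g_J,\sigma)\lesssim\sigma^{-p}2^{-Jsp}\sim\sigma^{-\alpha p/(\alpha-s)},
\]
i.e.\ $f^\ast(t)\lesssim t^{-(\alpha-s)/(\alpha p)}=t^{s/(\alpha p)-1/p}$ for $0<t<1$, the desired bound. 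Here the restriction $\alpha(1-p)_+<s<\alpha$ is exactly what guarantees $\alpha>s$ (low-frequency estimate), $s>0$ (high-frequency estimate) and $\MB(\rd)\subset\Lloc(\rd)$ via \eqref{suff-cond-c}; the extra summability requirement \eqref{cond3} on $q$ would be forced if one proved the upper bound not through the above clean split but through a Lorentz-space/interpolation argument in the spirit of Step~3 of the proof of Proposition~\ref{P-case-3}(ii), traded against the growth of the constants in the $L_r$-estimates.

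\emph{Main obstacle.} I expect part~(iii) to be the delicate point: one has to balance carefully the $L_\infty$-bound on low frequencies, which reflects only the local exponent $\alpha$, against the $L_p$-bound on high frequencies, which comes from the global behaviour $\varphi(t)\sim t^{\nd/p}$ as $t\to\infty$ — note that $\MB(\rd)\hookrightarrow B^s_{p,q}(\rd)$ alone gives only $t^{-1/p+s/\nd}$, which is too large — and one must pin down which hypotheses on $s$ and $q$ the chosen argument actually consumes (this is precisely where, as the authors note, the interplay of $s$ and $\varphi$ is not yet fully transparent). A lesser difficulty is in part~(ii): the $k_j$ translated atoms must be placed so that no large dyadic cube $Q_{\nu,k}$ manufactures a spurious logarithmic factor in the $\n$-norm, which is why their centres are arranged in one separated $\nd$-dimensional block rather than scattered along a line.
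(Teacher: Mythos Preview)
Your treatment of the preliminary claim and of parts (i) and (ii) matches the paper's: the upper bound comes from $\MB(\rd)\hookrightarrow\B(\rd)$ together with \eqref{egv-B-subcritical}, \eqref{egv-B-critical} (the borderline $s=\sigma_p$ being handled in \cite{Ha-crc}), and the lower bound is obtained from $k_j=\whole{2^{j\nd}\varphi(2^{-j})^p}$ translated copies of a single scale-$j$ atom with coefficient $2^{-js}\varphi(2^{-j})^{-1}$. A small remark on (ii): the paper in fact places the copies along a line at $m_k=(4^k,0,\ldots,0)$, and this works because under \eqref{case-C} one has $\varphi(2^{-\nu})2^{\nu\nd/p}\sim1$ for $\nu\le0$, so the $\n$-contribution from a large cube caps at $2^{-j\nd/p}k_j^{1/p}\lambda_j\sim1$; your concern about a spurious logarithmic factor does not materialise, though your block arrangement is of course also fine.

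For part (iii) you take a genuinely different route from the paper. The paper compares $\varphi$ to the model $\widetilde{\varphi}=\varphi_{u,p}$ with $\nd/u=\alpha$, applies Corollary~\ref{m_b_emb} to get $\mathcal{N}^s_{\widetilde{\varphi},p,q}(\rd)\hookrightarrow B^0_{r,q}(\rd)$ with $\tfrac1r=\tfrac1p-\tfrac{s}{\alpha p}$, and then reads off $\egv{B^0_{r,q}(\rd)}(t)\sim t^{-1/r}$; since this last envelope is only available when $q\le\min(r,2)$ and $r\ge1$, the hypotheses \eqref{cond3} and $s>\alpha(1-p)_+$ enter exactly here. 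Your direct frequency-splitting argument---$L_\infty$ control of $\sum_{j\le J}f_j$ from the $\nu=j$ term of the $\n$-norm, $L_p$ control of $\sum_{j>J}f_j$ from $\MB(\rd)\hookrightarrow\B(\rd)$, then Chebyshev and optimisation over $J$---bypasses $B^0_{r,q}$ entirely and uses only $0<s<\alpha$ for the two geometric sums, together with the standing assumption $\MB(\rd)\subset\Lloc(\rd)$. In particular your method does not consume \eqref{cond3} at all, so it is both more elementary and yields a strictly stronger statement than the one recorded in the proposition; your diagnosis that \eqref{cond3} is an artefact of an embedding/interpolation approach is precisely what happens in the paper's proof.
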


\begin{proof}
Note first that  \eqref{cond1} implies $\lim_{t \to 0^+} \varphi(t) = 0$ when $s>0$. \\
  
  \emph{Step 1.}~ The estimate  \eqref{estimate-above} in (i) is a direct consequence of \eqref{case-C} which implies \eqref{MB-in-B}, Proposition \ref{properties:EG}(i),  \eqref{egv-B-subcritical} and \eqref{egv-B-critical}, complemented by \cite[Prop.~8.12, 8.14]{Ha-crc}. \\
%\emph{Step 4.}~

  \emph{Step 2.}~ We deal with the estimate from below in (ii). Let $\tilde{\psi}$ be the function given by \eqref{Psi}. We define, for $j\in\nat$,
	%%%%%%%%%%%%%%%%%
		\begin{align*}
		f_j(x) &=2^{-js}\varphi(2^{-j})^{-1} \tilde{\psi}(2^jx) ,\\
		g_j(x) &=\sum_{k=1}^{k_j} f_j(x- m_k), 
	\end{align*}
% \marginpar{\red{ As in Step 2 of  the proof of Prop. 3.14,   $m_k$ %shouldn't be something like $m_{k,j}=(2^{j+k},0\ldots,0)$ ?  }}	
	where  $ m_k=(4^{k},0,\ldots ,0 )$  and $k_j= \whole{2^{j\nd}\varphi\left(2^{-j}\right)^p}$. %We choose the number $N\in\nat$ in such a way that  $\|g_j|\MB(\rd) \| \sim 1$. 
	We have 
	\begin{align*}
		\|g_j|\MB(\rd) \| &\sim  2^{js}\max\, \bigg\{	\varphi(2^{-j})2^{-js}\varphi\left(2^{-j}\right)^{-1} , 2^{-j\frac{\nd}{p}}{k_j}^{1/p} 2^{-js}\varphi\left(2^{-j}\right)^{-1}\bigg\} \\ &\sim \max\, \left\{ 1, {k_j}^{1/p}2^{-j\frac{\nd}{p}} \varphi\left(2^{-j}\right)^{-1}\right\} \sim 1.
	\end{align*}
%if $N= \whole{2^{j\nd}\varphi\left(2^{-j}\right)^p}$. 
In consequence, 
	\begin{equation}\label{hj}
		g_j^*\left({c}\varphi(2^{-j}\right)^p)\ge C 2^{-js}\varphi\left(2^{-j}\right)^{-1}= C
	\left(\varphi^{-1}\left(\left(\varphi(2^{-j})^p\right)^\frac{1}{p}\right) \right)^s\left(\varphi\left(2^{-j}\right)^{p}\right)^{-1/p} , \qquad j\in\no.
      \end{equation}
      Moreover, $\varphi$ is a strictly increasing function that satisfies the doubling condition,  i.e., $\varphi(t)\le\varphi(2t)\le 2^{\nd/p} \varphi(t)$,  and $\lim_{t\to 0^+}\varphi(t)=0$. Therefore 
	\begin{equation*}
		g_j^*(t)\ge C \ t^{-1/p} \ \varphi^{-1}\left(t^\frac{1}{p}\right)^s , \quad t>0, 
	\end{equation*}
	and 
	\[ 
 C \ t^{-1/p} \varphi^{-1}\left(t^\frac{1}{p}\right)^s  \le  \egv{\mathcal{N}^s_{\varphi,p, q}}(t) , \qquad 0<t<1.
 \]
 {\em Step 3}.~ 
 Now we assume that \eqref{cond2a} holds. Please note that $\alpha<\frac{\nd}{p}$. We choose {$u$} such that $\frac{\nd}{u}=\alpha$. Then {$p<u<\infty$} and  the function 
  \begin{equation}\label{phi_up}
  	\widetilde{\varphi}(t)=\begin{cases}
  		t^{\nd/u}\qquad\text{if} \qquad 0<t\le 1,\\
  		t^{\nd/p}\qquad\text{if} \qquad t>1, 
  	\end{cases}
  \end{equation} 
satisfies our assumption. If  $0< s< \frac{\nd}{u}=\alpha$, then we can find $r$ with $p\le r< \infty$, such that $ \frac{1}{r}=\frac{1}{p}- \frac{su}{\nd p}$, resulting in  
\[ \left\{2^{-js}\widetilde{\varphi}(2^{-j})^{\frac{p}{r}-1}
\right\}_{j\in\nat}\in 
\ell_\infty\qquad \text{if}\qquad \frac{1}{r}=\frac{1}{p}- \frac{su}{\nd p}.\] 
So { Corollary~\ref{m_b_emb} }  %\cite[Cor.~5.6]{hms22} 
implies
\begin{equation}\label{emb-Lr}
	\mathcal{N}^s_{\widetilde{\varphi},p,q}(\rd)\hookrightarrow B^0_{r,q}(\rd).
	%\hookrightarrow L_r(\rd), 
\end{equation}
Now the upper estimates in  \eqref{31-10_2} follows from {\eqref{egv-B-subcritical}} %\cite[Prop.~8.12]{Ha-crc} 
%for $\varphi=\widetilde{\varphi}$ and 
{for $q\le \min(2,r)$ and $r\geq 1$}, that is, $q$ satisfying \eqref{cond3} and {$\alpha (1-p)_+<s<\alpha$}, together with 
\[	\mathcal{N}^s_{\varphi,p,q}(\rd)\hookrightarrow 	\mathcal{N}^s_{\widetilde{\varphi},p,q}(\rd),   \] 
{which follows from the lower estimate in \eqref{cond2a} and Theorem~\ref{main}.}
%If $s=\alpha$, then  \eqref{31-10_2} follows from \cite{SeeT} for $\varphi=\widetilde{\varphi}$ and $q$ satisfying \eqref{cond3}.
%Finally, the lower estimate in \eqref{cond2a} \green{ and Theorem~\ref{main} imply}
%\[	\mathcal{N}^s_{\varphi,p,q}(\rd)\hookrightarrow 	\mathcal{N}^s_{\widetilde{\varphi},p,q}(\rd),   \] 
%%cf.  \cite[Thm.~5.1]{hms22},
The upper estimate in \eqref{cond2a}  gives us
\[ 
	t^{-\frac{1}{p}+\frac{su}{\nd p}} \le c \ t^{-\frac{1}{p}} \varphi^{-1}\left(t^{\frac{1}{p}}\right)^s.
\]
\end{proof}

\begin{examples}
  Let us point out that condition \eqref{cond2a} is neither trivially satisfied for all $\varphi\in\Gp$, nor impossible. For the latter argument, let $0<s<\alpha<\frac{\nd}{p}$ and  take
  \[
  \varphi_\alpha (t) = \begin{cases} t^\alpha, & t\leq 1, \\ t^{\frac{\nd}{p}}, & t>1,\end{cases} \]
  which satisfies  \eqref{cond1}, \eqref{case-C}, and \eqref{cond2a}.
  On the other hand, the function $\varphi^\ast(t)$, defined by
  \[\varphi^\ast(t) = \begin{cases} 0, & t=0, \\ -t \ln t, & 0<t<e^{-1}, \\ t, & t\geq e^{-1}, \end{cases}
  \]
  is a continuous strictly increasing function belonging to $\Gp$ with $p=\nd$. Moreover,
  \[\lim_{t\rightarrow 0^+} t^{-\alpha}\varphi^\ast(t) = \begin{cases}  \infty, & \alpha \geq 1, \\ 0, & 0<\alpha<1. \end{cases} \]
Then $\varphi^\ast$ satisfies \eqref{cond1} for any $s<1 = \frac{\nd}{p}$ as well as \eqref{case-C}, but not \eqref{cond2a}.  
\end{examples}

We conclude the paper with two examples.

%\blue{
  \begin{example}\label{phi-uv-rev}
    Let $0<p\leq u,v<\infty$, $s\in\rr$, $0<q\leq\infty$, and consider the function  $\varphi_{u,v}$  from Example~\ref{exm-Gp}(i). We consider the growth envelope function $\mathcal{N}^s_{\varphi_{u,v},p,q}(\rd)$. According to our findings in Propositions~\ref{P-case-1}, \ref{P-case-2}, \ref{P-case-3} and \ref{P-case-4} the limits
    \[
    \lim_{t\rightarrow 0^+} \varphi_{u,v}(t), \qquad  \lim_{t\rightarrow 0^+} \varphi_{u,v}(t)t^{-\frac{\nd}{p}}, \qquad \text{and}\quad \lim_{t\rightarrow \infty} \varphi(t) t^{-\frac{\nd}{p}}
    \]
    are important. So let us first consider these. Plainly, we always have
    \[ \lim_{t\rightarrow 0^+} \varphi_{u,v}(t) =  \lim_{t\rightarrow 0^+} t^{\frac{\nd}{u}} = 0,    \]
    since $0<u<\infty$, which immediately excludes the new case dealt with Proposition~\ref{P-case-3}(ii). As for the second and third limit, we have
    \[
    \lim_{t\rightarrow 0^+} \varphi_{u,v}(t)t^{-\frac{\nd}{p}} =  \begin{cases} 1, & u=p, \\ \infty, & u>p, \end{cases}\qquad \text{and}\qquad
    \lim_{t\rightarrow \infty} \varphi_{u,v}(t)t^{-\frac{\nd}{p}}  = \begin{cases} 1, & v=p, \\ 0, & v>p. \end{cases}
    \]
    Concerning the question whether $\mathcal{N}^s_{\varphi_{u,v},p,q}(\rd)\not\hookrightarrow L_\infty(\rd)$, condition \eqref{cond1} reads as
    \[
    \begin{cases} s<\frac{\nd}{u}, & 0<q\leq\infty,\\
      s=\frac{\nd}{u}, & 1<q\leq\infty.\end{cases}
    \]
    Obviously, if $s>\frac{\nd}{u}$ or $s=\frac{\nd}{u}$ and $0<q\leq 1$, then $\egv{\mathcal{N}^s_{\varphi_{u,v},p,q}(\rd)}(t) \sim 1$, $t>0$, recall Corollary~\ref{Cor-SM-1}(i). 
    Finally, to ensure $ \mathcal{N}^s_{\varphi_{u,v},p,q}(\rd)\subset\Lloc(\rd)$ Theorem~\ref{emb-L1loc}(ii), (iii) yield for $p\geq 1$ the assumptions \eqref{suff2-Lloc2}, and for $0<p<1$,
    \[
\begin{cases} s> \nd\frac{1-p}{u}, & 0<q\leq\infty, \\ s= \nd\frac{1-p}{u}, & 0<q\leq 1.\end{cases}
    \]
    So we are left to consider the cases
    \begin{align}\label{cond-suv-1}
1\leq p<\infty &\qquad\text{and}\qquad \begin{cases} s=0, & 0<q\leq\min(p,2), \\ 0<s<\frac{\nd}{u}, & 0<q\leq\infty, \\ s=\frac{\nd}{u}, & 1<q\leq\infty,\end{cases}
\intertext{and}
0<p<1 &\qquad\text{and}\qquad \begin{cases} s= \nd\frac{1-p}{u}, & 0<q\leq 1, \\  \nd\frac{1-p}{u}<s<\frac{\nd}{u}, & 0<q\leq\infty, \\ s=\frac{\nd}{u}, & 1<q\leq\infty.\end{cases}\label{cond-suv-2}
    \end{align}
    Let \eqref{cond-suv-1} or \eqref{cond-suv-2} be satisfied in the sequel.
    \begin{enumerate}[{\bfseries\upshape(i)}]
    \item
      Assume $v>p$. Then
      \[\egv{\mathcal{N}^s_{\varphi_{u,v},p,q}(\rd)}(t) = \infty, \quad t>0.\]
      This corresponds to Proposition~\ref{P-case-1}.
    \item
      If $u=v=p$, then $\mathcal{N}^s_{\varphi_{u,v},p,q}(\rd) = \mathcal{N}^s_{p,p,q}(\rd) = \B(\rd)$ and the result is recalled in \eqref{egv-B-subcritical} and \eqref{egv-B-critical}. This result obviously corresponds to the more general situation considered in Proposition~\ref{P-case-2}.
    \item
      Assume $v=p<u$ and $s=0$ with $p\geq 1$ and $0<q\leq\min(p,2)$, according to \eqref{cond-suv-1}. Then by Proposition~\ref{P-case-3}(i),
       \[\egv{\mathcal{N}^0_{\varphi_{u,p},p,q}(\rd)}(t) \sim t^{-\frac1p}, \quad t>0.\]
\item
  Assume $v=p<u$ and $s>0$ satisfying \eqref{cond-suv-1} or \eqref{cond-suv-2}. Let first $p\geq 1$ and $0<s<\frac{\nd}{u}$, then condition \eqref{cond2a} is satisfied with $\alpha= \frac{d}{u}$. Then we get for $q\leq \min\{2, \frac{p\nd}{\nd-su}\}$ that
  \[
  \egv{\mathcal{N}^s_{\varphi_{u,p},p,q}(\rd)}(t) \sim t^{-\frac1p+\frac{us}{p\nd}} = t^{\frac{u}{p}\left(\frac{s}{\nd}-\frac1u\right)}, \quad t>0.
  \]
  This is obviously better than the upper estimate $t^{-\frac1p+\frac{s}{\nd}}$ in \eqref{estimate-above}, since $u>p$ now. If $0<p<1$, we have to assume, that $\nd \frac{1-p}{u}<s<\frac{\nd}{s}$, but the result remains the same. If $s=\nd\frac{1-p}{u}>0$, then it can be written as
  \[
  \egv{\mathcal{N}^s_{\varphi_{u,p},p,q}(\rd)}(t) \sim t^{-1}, \quad t>0,
  \]
  assuming that $0<q\leq 1$.
  
  In case of $s=\frac{\nd}{u}$, $q>1$, $0<p<\infty$, the choice of $\alpha>s = \frac{\nd}{u}$ satisfying \eqref{cond2a} is impossible, so we are left with the estimate from  above by \eqref{estimate-above},
  \[
\egv{\mathcal{N}^{\nd/u}_{\varphi_{u,p},p,q}(\rd)}(t) \leq c\ t^{-\frac1p+\frac{1}{u}}, \quad t>0, 
\]
since $u>p$ implies $s=\frac{\nd}{u} < \frac{\nd}{p}$. As for the estimate from below, \eqref{31-10_1} would lead to lower estimate $\egv{\mathcal{N}^{\nd/u}_{\varphi_{u,p},p,q}(\rd)}(t) \geq c$ only, but instead we use the embedding \eqref{ex-last-2} together with Proposition~\ref{properties:EG} and \eqref{egv-B-critical} to get 
\[
\egv{\mathcal{N}^{\nd/u}_{\varphi_{u,p},p,q}(\rd)}(t) \geq c\ |\log t|^{1/q'}, \quad 0<t<\frac12. 
\]
This is the only case when we do not have a sharp result yet.      
\end{enumerate}
	\end{example}
    
  \begin{example}
    Let $0<p<\infty$. At last we return to Example~\ref{exm-Gp}(ii),
    \[\varphi^+(t) = \max\left(t^\frac{\nd}{p},1\right) \in \Gp, \]
    which corresponds to $\varphi_{u,v}$ studied above, but now with $u=\infty$. Then
    \[
    \lim_{t\to 0^+} \varphi^+(t) = 1, \qquad  \lim_{t\rightarrow 0^+} \varphi^+(t)t^{-\frac{\nd}{p}}=\infty, \qquad \text{and}\quad \lim_{t\rightarrow \infty} \varphi^+(t) t^{-\frac{\nd}{p}} = 1.
    \]
    Clearly, \eqref{cond1} provides that $\mathcal{N}^s_{\varphi^+,p,q}(\rd) \not\hookrightarrow L_\infty (\rd)$ if $s<0$ or $s=0$ and $q>1$. Moreover, by Theorem~\ref{emb-L1loc}(i), we need $s>0$ or $s=0$ and $0<q\leq 2$ for $ \mathcal{N}^s_{\varphi^+,p,q}(\rd)\subset\Lloc(\rd)$. So we are left to consider $s=0$ and $1<q\leq 2$ only. Then Proposition~\ref{P-case-3}(ii) results in
    \[
    c_1\ |\log t|^{\frac{1}{q'}} \leq \egv{\mathcal{N}^0_{\varphi^+,p,q}(\rd)}(t) \leq c_2\ |\log t|^{\frac{2}{q'}}. 
    \]
In all other cases the growth envelope for     $ \mathcal{N}^s_{\varphi^+,p,q}(\rd)$ is either a constant, or does not exist. As already mentioned above, this `new' case seems remarkable to us, apart from the fact that the exponent is not quite sharp yet.   
  \end{example}

%%%%%%%%%%%%%%%%%%%%%%%%%%%%%%%%%%%%
%%%%%%%%%%%%%%%%%%%%%%%%%%%%%%%%%%%%
\bibliographystyle{alpha}	
\def\cprime{$'$}

%%%%%%%%%%%%%%%%%%%%%%%%%%%%%%%%%%%%%%%%%%%%%%%%%%%%%%%%%%%%%%%%%%%%%%%%%%
\bigskip~\bigskip~

{\small
\noindent%\begin{minipage}[t]{0.3\textwidth}
Dorothee D. Haroske\\
Institute of Mathematics \\
Friedrich Schiller University Jena\\
07737 Jena\\
Germany\\
%\vfill
{\tt dorothee.haroske@uni-jena.de}\\[4ex]
%\end{minipage}\hfill\begin{minipage}[t]{0.3\textwidth}
%
Susana D. Moura\\
University of Coimbra\\
CMUC, Department of Mathematics\\
Largo D. Dinis, 3000-143 Coimbra\\
Portugal\\
\texttt{smpsd@mat.uc.pt}\\[4ex]
%
%\end{minipage}\hfill\begin{minipage}[t]{0.3\textwidth}
%
Leszek Skrzypczak\\
Faculty of Mathematics \& Computer Science\\
Adam  Mickiewicz University\\
ul. Uniwersytetu Pozna\'nskiego 4\\
61-614 Pozna\'n\\
Poland\\
{\tt lskrzyp@amu.edu.pl}
%\end{minipage}\\[0ex]
}
}

\end{document}